%================================================================ Formatting.
\documentclass[reqno,11pt]{amsart}
\usepackage{amsmath,amssymb,amsthm,graphicx,a4wide,enumerate,url}
\usepackage[small,bf]{caption} \setlength{\captionmargin}{10pt}
\usepackage{color}

%========================================================== Layout parameters.
% center horizontally with margins of 1.25in left and right
\setlength{\oddsidemargin}{0.5\paperwidth}
\addtolength{\oddsidemargin}{-0.5\textwidth}
\addtolength{\oddsidemargin}{-1in}
\setlength{\evensidemargin}{\oddsidemargin}
% vertical adjustment with margins 1in top and bottom
\setlength{\textheight}{8.6in}
\setlength{\topmargin}{-0.1in}
% head and foot separations
\setlength{\headsep}{.2in}
\setlength{\footskip}{.3in}
%======================================================= Theorem environments.
\newtheorem{theorem}{Theorem}[section]
\newtheorem{lemma}[theorem]{Lemma}
\newtheorem{proposition}[theorem]{Proposition}
\newtheorem{corollary}[theorem]{Corollary}

\theoremstyle{definition}
\newtheorem{definition}[theorem]{Definition}

\theoremstyle{remark}
\newtheorem{remark}[theorem]{Remark}

\numberwithin{equation}{section}
%===========================================================================

\begin{document}
\parskip.9ex

%===========================================================================
%=================================================================== Titles.
\title[Spatial Manifestations of Order Reduction in RK Methods for IBVPs]
{Spatial Manifestations of Order Reduction in Runge-Kutta Methods for Initial Boundary Value Problems}

\author[R. R. Rosales]{Rodolfo Ruben Rosales}
\address[Rodolfo Ruben Rosales]
{Department of Mathematics \\ Massachusetts Institute of Technology \\
77 Massachusetts Avenue \\ Cambridge, MA 02139}
\email{rrr@math.mit.edu}

\author[B. Seibold]{Benjamin Seibold}
\address[Benjamin Seibold]
{Department of Mathematics \\ Temple University \\
1805 North Broad Street \\ Philadelphia, PA 19122}
\email{seibold@temple.edu}
\urladdr{http://www.math.temple.edu/\~{}seibold}

\author[D. Shirokoff]{David Shirokoff}
\address[David Shirokoff]
{Department of Mathematical Sciences \\ New Jersey Institute of Technology \\ University Heights \\ Newark, NJ 07102}
\email{david.g.shirokoff@njit.edu}
\urladdr{https://web.njit.edu/\~{}shirokof}

\author[D. Zhou]{Dong Zhou}
\address[Dong Zhou]
{Department of Mathematics \\ California State University Los Angeles \\
5151 State University Drive \\ Los Angeles, CA 90032}
\email{dzhou11@calstatela.edu}

\subjclass[2000]{65L20; 65M15; 34E05}
%65L20: Stability and convergence of numerical methods
%65M15: Error bounds
%34E05: Asymptotic expansions
\keywords{Initial-Boundary-Value problem, time-stepping, Runge-Kutta, order reduction, boundary layer, stage order, weak stage order, modified boundary conditions}

%===========================================================================
\begin{abstract}
This paper studies the spatial manifestations of order reduction that occur when time-stepping initial-boundary-value problems (IBVPs) with high-order Runge-Kutta methods. For such IBVPs, geometric structures arise that do not have an analog in ODE IVPs: boundary layers appear, induced by a mismatch between the approximation error in the interior and at the boundaries. To understand those boundary layers, an analysis of the modes of the numerical scheme is conducted, which explains under which circumstances boundary layers persist over many time steps. Based on this, two remedies to order reduction are studied: first, a new condition on the Butcher tableau, called weak stage order, that is compatible with diagonally implicit Runge-Kutta schemes; and second, the impact of modified boundary conditions on the boundary layer theory is analyzed.
\end{abstract}
%===========================================================================

\maketitle

%===============================================================================
% Macros and Notations
%===============================================================================
\newcommand{\hfillL}{\hspace*{0mm}\hfill}
\newcommand{\hfillR}{\hfill\hspace*{0mm}}
\newcommand{\Srm}{\textrm{\S}}
\newcommand{\TheoremEnd}{~\hfill$\clubsuit\/$}
\newcommand{\matrixM}{derivative coefficient matrix}
\newcommand{\MatrixM}{Derivative Coefficient Matrix}
\newcommand{\matrixMtilde}{derivative coefficient matrix}
\newcommand{\MatrixMtilde}{Derivative Coefficient Matrix}
\newcommand{\veci}[1]{\vec{#1}}					% s-vector
\newcommand{\vecii}[1]{\underbar{\mbox{\boldmath$#1$}}}		% (s+1)-vector
\newcommand{\vecipower}[2]{\veci{#1}^{\,#2}}			% s-vector with superscript
\newcommand{\veciipower}[2]{\vecii{#1}^{\,#2}}			% (s+1)-vector with superscript
\newcommand{\vecc}{{\veci{c}}} 					% column vector c
\newcommand{\vece}{\veci{e}} 					% s-vector of all 1
\newcommand{\diffop}{\mathcal{L}} 				% differential operator
\newcommand{\bdryop}{\mathcal{B}} 				% boundary operator
\newcommand{\dt}{{\Delta t}} 					% time step size
\newcommand{\ef}{\psi}						% eigenmode
\newcommand{\EF}{\Psi}						% eigenmode (inside BL with scaled variable X)
\newcommand{\mbc}[1]{\vec{g}_{\text{MBC}#1}} 			% modified bc
\newcommand{\embc}{\vec{g}_{\text{EMBC}}} 			% exact modified bc
\newcommand{\sov}[1]{\veci{\tau}^{\,(#1)}} 			% stage Order Vector\right
\newcommand{\soq}{\tilde{q}}					% (weak) stage order
\newcommand{\dimV}{\sigma }					% dimension of the invariant subspace
\newcommand{\projP}{V}
\newcommand{\projPperp}{V_{\perp}}
\newcommand{\mytilde}[1]{\tilde{#1}}
\newcommand{\BlockM}{\Gamma}				%Denotes a temporary matrix in error calculation.
\newcommand{\SchurBlock}{S_{\Gamma}} 		%Schur complement of \BlockM.
\newcommand{\MatD}{D}						%Temporary matrix variable
\newcommand{\MatB}{C}						%Temporary matrix variable
\newcommand{\tempw}{\gamma}					%Temporary vector variable
\newcommand{\VecErr}{h}						%Error on the RHS of the stage error equation (1 - ML)
\newcommand{\zeromode}{\psi_0}				%``Zero'' eigenmode.
\newcommand{\Erralpha}{\alpha}				%Error projection vector for stage error equation.

\newcommand{\err}{\epsilon}					% approximation error
\newcommand{\errs}{\epsilon}				% approximation error (spatial part)
\newcommand{\lte}{\delta}					% local truncation error
\newcommand{\ltes}{\delta}					% local truncation error (spatial part)
\newcommand{\ltel}{\delta_{\rm 0}}					%LTE for the final step update.	
\newcommand{\errl}{\epsilon_{\rm 0}}				%Error for the final step update.	
\newcommand{\ampk}[1]{H_{#1}(\dt)}
\hyphenation{Mas-sachusetts}

\newcommand{\rsol}{\Phi^{m}}
\newcommand{\hsol}{\Psi}
\newcommand{\esol}{\tilde{\Delta}}
\newcommand{\mpow}{m}
\newcommand{\powfail}{\kappa}
%===============================================================================

%===============================================================================
% Article
%===============================================================================

% --------------------------------------------------------------------------- 80
% --------------------------------------------------------------------------- 80
\section{Introduction}
% --------------------------------------------------------------------------- 80
% --------------------------------------------------------------------------- 80
Runge-Kutta (RK) methods advance a time-dependent differential equation forward in time by means of multiple stages. Each stage corresponds to one right hand side evaluation or solve, and appropriate linear combinations of those evaluations generate a high order of accuracy. Two particular advantages of RK schemes over alternatives, such as multistep schemes, are their locality in time and their stability properties \cite{HairerNorsettWanner1993}. In particular, for stiff problems, many types of high-order implicit RK (IRK) methods exist that are A-stable.

Drawbacks of RK methods are their computational cost per time step, as well as order reduction: when applied to certain stiff problems, the observed order of accuracy of the numerical solution may be lower than the (formal) order of the scheme. While order reduction can be rationalized for ordinary differential equations (ODEs) in terms of stiff limits \cite{ProtheroRobinson1974, WannerHairer1991}, for initial boundary value problems (IBVPs) geometric features in the spatial error play a key role. The specific focus of this paper is: (a)~a modal analysis and geometric (via singular perturbation theory) understanding of the global-in-time spatial error, including the accuracy of gradients; (b)~the impact of weak stage order (WSO)---a new condition on RK schemes that remedies order reduction---and modified boundary conditions on the spatial error and by what means these properties remedy or alleviate order reduction.
Specifically, we consider problems of the form
\begin{equation}\label{eq:IBVP}
\begin{cases}
u_t  = \diffop u + f & \mbox{in}\quad\Omega\times(0,t_\text{f}),\\
\mathcal{B}u  = g    & \mbox{on}\quad\partial\Omega\times[0,t_\text{f}], \\
u  = u_0             & \mbox{on}\quad\Omega\times\{t=0\},
\end{cases}
\end{equation}
where $\diffop$ is a linear differential operator, and $\bdryop$ is a boundary operator.  Most of the presentation/analysis in this paper focuses on \eqref{eq:IBVP} with Dirichlet boundary conditions (b.c.), a linear, second-order operator $\diffop$ (e.g., diffusion), and $\Omega = (0, 1)$, because those simple situations suffice to establish the fundamental spatial manifestation of order reduction. However, the order reduction phenomenon arises similarly in higher dimensions, for other types of b.c.\ (see \Srm\ref{sec:outlook}), and other differential operators (see \Srm\ref{ssec:NumExamples_Airy}), albeit with additional effects that are not studied here. Note, though, that many of the structural results and techniques developed in this paper (particularly \S\ref{sec:WeakStageOrder}) transfer to more general situations.

Order reduction for IBVPs incurs some fundamental differences to the stiff ODE case, most prominently: (i)~time discretizations of \eqref{eq:IBVP} are formally infinitely stiff (i.e., eigenvalues of $\diffop$ may be arbitrarily large in magnitude); (ii)~for IBVPs, spatial derivatives of the solution may be important and also exhibit order reduction; and (iii)~boundary conditions play a crucial role in the manifestation of order reduction for IBVPs. In particular, the naive thing to do for a RK method is to impose the b.c.\ for the PDE at the times $t_i\/$ associated with the stages, i.e., $u_i = g_i = g(t_i)\/$ in the case of Dirichlet b.c.. These \emph{conventional b.c.}\ let the error vanish at the boundary, yet lead to the paradoxical situation that for IBVPs, RK schemes may \emph{lose} accuracy because the approximation is \emph{too accurate} near the boundary. As we will show below, the effect of conventional b.c.\ will give rise to a singularly perturbed problem for the spatial numerical error and generate boundary layers (BLs).

A crucial property of the order reduction phenomenon studied here is that the loss of convergence order is caused solely by the time discretization. Therefore, the analysis in this paper focuses on semi-discrete problems, where only time is discretized, but space is left continuous; and likewise, all numerical examples are conducted with an extremely fine spatial resolution. This is feasible, as we restrict to schemes that are unconditionally stable when they are applied to problem \eqref{eq:IBVP}. The restriction to the semi-discrete case has an important implication: the order reduction phenomenon cannot be simply overcome by the choice of a specific spatial discretization; any spatial discretization that converges (as $\Delta x\to 0$) to the semi-discrete limit will encounter the order reduction phenomenon studied here.

% --------------------------------------------------------------------------- 80
\subsection{A Simple Example IBVP}
\label{ssec:example_order_loss}
% --------------------------------------------------------------------------- 80
Here we demonstrate (a)~that order reduction can occur with straightforward schemes (e.g., DIRK), applied to simple problems (e.g., the 1D heat equation); and (b)~how it manifests spatially. The only aspect that is strictly needed is that the problem has time-dependent forcing or b.c.; autonomous problems do not incur order reduction (see \cite{SanzSernaVerwer1989a, SanzSernaVerwer1989b, SanzVerwerHundsdorfer1986} for fully discrete schemes; \cite{OstermannRoche1992} for discrete-in-time schemes).

Consider the IBVP \eqref{eq:IBVP} with $\diffop = \partial_{xx}\/$, $\Omega=(0,1)\/$, and forcing $f$, Dirichlet b.c.~$g$, and initial conditions (i.c.)~$u_0$ chosen so that the exact solution is $u(x\/,\,t) = \cos(t)\/$. We discretize the problem in space, on a uniform grid with $10000\/$ points, using standard second order centered differences (so that spatial errors are negligible relative to temporal errors). Finally, the resulting system is advanced forward in time using standard first to fourth order DIRK schemes (see Appendix~\ref{app:listofRK} for the schemes used).

% --------------------------------------------------------------------------- 80
\begin{table}
	\begin{tabular}{|l|cccc|}
		\hline
		& DIRK1=BE & DIRK2 & DIRK3 & DIRK4 \\
		\hline\hline
		convergence order of $u$        & 1 & 2   & 2   & 2		\\
		convergence order of $u_x$      & 1 & 1.5 & 1.5 & 1.5	\\
		convergence order of $u_{xx}$   & 1 & 1   & 1   & 1		\\
		convergence order of $u_{xxx}$  & 1 & 0.5 & 0.5 & 0.5	\\
		\hline
	\end{tabular}
	\vspace{-.4em}
	\caption{Observed convergence order (in time) for DIRK~1 to 4. DIRK~1 is backward Euler; DIRK~2 to 4 can be found in Appendix~\ref{app:listofRK}.}
	\label{tab:observed_convg_order}
\end{table}
% --------------------------------------------------------------------------- 80

Table~\ref{tab:observed_convg_order} shows the resulting convergence orders for the solution and its spatial derivatives (which are frequently important in IBVPs for body forces, Dirichlet-to-Neumann maps, etc.), all measured in the maximum norm. Backward Euler (BE=DIRK1) shows no order reduction in function value or derivatives. DIRK2 shows a reduction of half an order per spatial derivative ($u_x$ converges with $O(\dt^{1.5})$; $u_{xx}$ with $O(\dt)$, etc.). More severe order reduction arises for DIRK3 and DIRK4: they converge at the same orders as DIRK2. This highlights two important messages. First, order reduction can arise already in very simple problems. Second, it manifests in two ways: (i)~spatial derivatives may be less accurate than function values; and (ii)~schemes of order higher than two may drop to second order (less for spatial derivatives). A geometric explanation for these observations follows.

% --------------------------------------------------------------------------- 80
\subsection{Geometric Explanation of Order Reduction via Boundary Layers}
\label{ssec:orderloss_lte}
% --------------------------------------------------------------------------- 80
The cause for the observations in Table~\ref{tab:observed_convg_order} can be illustrated by studying the shape of the truncation errors. Figure~\ref{fig:errorshape_dirk123} shows the local (single time step) and global (fixed final time) errors in space, for the 1D heat equation problem considered in \Srm\ref{ssec:example_order_loss}, using backward Euler (DIRK1), DIRK2, and DIRK3, respectively. In each panel, results for three choices of $\dt$ are shown, with successive ratios of $2$. For \emph{all} schemes, boundary layers appear locally. However, for DIRK1 the boundary layers vanish globally, while for DIRK2 and DIRK3 they persist globally.

The error in the interior of the domain always scales like the order of the method, but the boundary layer amplitudes scale like $O(\dt^2)\/$. For DIRK3, this results in an order reduction of 1 for $u$. Moreover, any boundary layer has a thickness of $O(\sqrt{\dt})$, resulting in a/an (additional) reduction of half an order per spatial derivative.

% --------------------------------------------------------------------------- 80
\begin{figure}
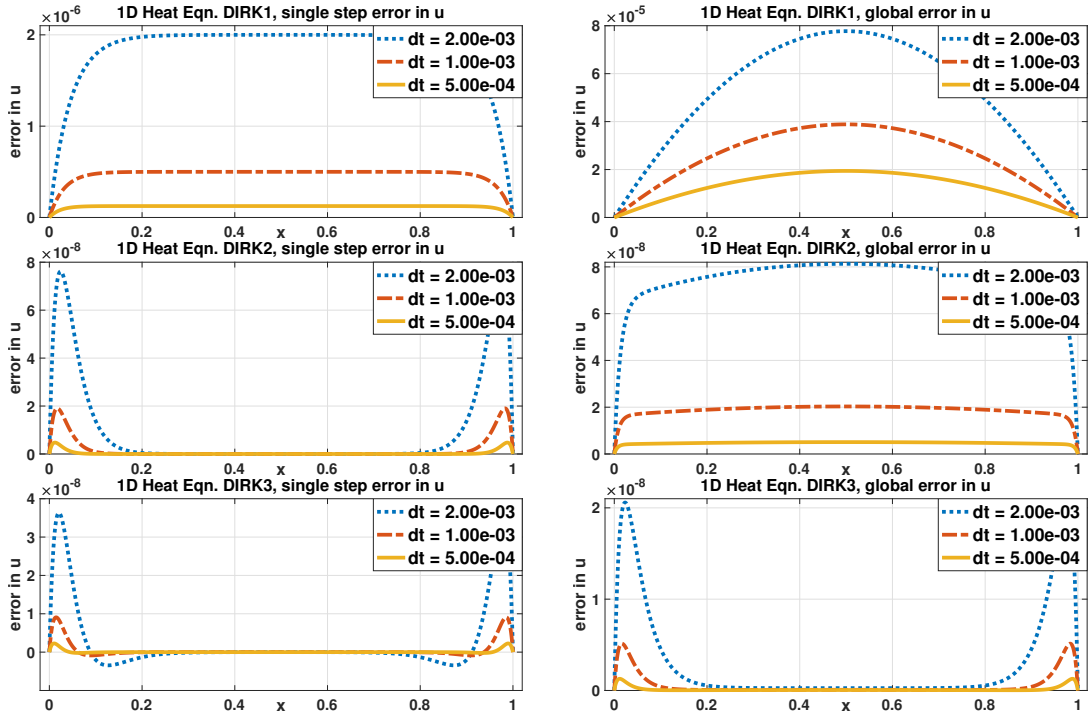

	\hfillL
	\includegraphics[width=0.44\textwidth]{%
		Figures/errshape_heat1d_dirk1_fd2_LTE}
	\hfill
	\includegraphics[width=0.44\textwidth]{%
		Figures/errshape_heat1d_dirk1_fd2_GTE}
	\hfillR\\
% --------------------------------------------------------------------------- 80
	\hfillL
	\includegraphics[width=0.44\textwidth]{%
		Figures/errshape_heat1d_dirk2_fd2_LTE}
	\hfill
	\includegraphics[width=0.44\textwidth]{%
		Figures/errshape_heat1d_dirk2_fd2_GTE}
	\hfillR\\
% --------------------------------------------------------------------------- 80
	\hfillL
	\includegraphics[width=0.44\textwidth]{%
		Figures/errshape_heat1d_dirk3_fd2_LTE}
	\hfill
	\includegraphics[width=0.44\textwidth]{%
		Figures/errshape_heat1d_dirk3_fd2_GTE}
	\hfillR
	\caption{Local (left) and global (right) errors as functions of $x\/$ for
		BE (top), DIRK2 (middle), and DIRK3 (bottom) with three $\dt\/$ choices.}
	\label{fig:errorshape_dirk123}
\end{figure}
% --------------------------------------------------------------------------- 80

Why boundary layers arise in the approximation error can be understood as follows. Every stage of a DIRK scheme is a backward Euler-type solve. Therefore, it is useful to first examine the BE scheme, applied to the 1D heat equation in the unit interval
\begin{equation}\label{eq:backwardEuler_step}
u^{n+1} - \dt\,u^{n+1}_{x\/x} = u^n + \dt\,f^{n+1}
\quad\mbox{for}\quad 0 < x < 1\/,
\end{equation}
with a smooth forcing, and conventional Dirichlet b.c.\ applied, i.e., $u^{n+1} = g^{n+1}\/$ for $x\in\{0,1\}$. Let $u^{*}$ be the exact solution. Then the approximation error at time $t_{n+1}$, defined as $\errl^{n+1}=u^{n+1}-u^*(t_{n+1})$, satisfies the BVP
\begin{equation}\label{eq:BackwardEulerStepError}
\errl^{n+1} - \dt\, \partial_{xx}\errl^{n+1} = \errl^n + F^{n+1}
\quad\text{for}\quad 0 < x < 1\/,
\end{equation}
with homogeneous Dirichlet b.c.. Here $F^{n+1} = - (u^*(t_{n+1})-u^*(t_n)-\dt\,u^*_{xx}(t_{n+1})-\dt\,f^{n+1})=O(\dt^2)$.

Problem \eqref{eq:BackwardEulerStepError} is a singularly perturbed BVP, where the time step $\dt$ is the small parameter. Standard boundary layer (BL) theory~\cite{BenderOrszag1978} implies that, generally, the solution exhibits a BL of thickness $O(\sqrt{\dt})\/$, and amplitude determined by $\errl^n + F^{n+1}$. If the i.c.\ are captured exactly, i.e., $\errl^0 = 0$, the BL amplitude is $O(\dt^2)$. This explains exactly the top left panel in Figure~\ref{fig:errorshape_dirk123}.

Higher order DIRK schemes combine intermediate stage solutions, each of which arises from a backward Euler-type (and thus singularly perturbed) problem. The BLs of the stage solutions do generally not cancel out, thus yielding a composite layer in the numerical approximation. How the composition of boundary layers from the stages works, and why they may vanish globally in time (cf.~BE), is the focus of the study in \Srm\ref{sec:orderloss_gte}; reducing the size of the BL is the focus of \Srm\ref{sec:WeakStageOrder} and \Srm\ref{sec:MBC}.

% --------------------------------------------------------------------------- 80
\subsection{Current Paper in Context of Prior Research}
\label{ssec:PriorResearch}
% --------------------------------------------------------------------------- 80

%---------------------------------------------------------------------------- 80
\subsubsection{Error analysis for RK order reduction in IBVPs.}
%---------------------------------------------------------------------------- 80
Early work \cite{BrennerCrouzeixThomee1982} highlighted that RK order conditions are in general not sufficient to ensure classical ($p$-th order) convergence for IBVP. For linear constant coefficient PDEs, they observed that additional conditions on the RK scheme, referred to as ``strictly accurate'' (equivalent to stage order) of order $p-1$, were sufficient to obtain a $p$-th order global error.

Subsequent studies in the 1980s \cite{Verwer1986, SanzSernaVerwer1989b, SanzVerwerHundsdorfer1986, SanzSernaVerwer1989a} examined the fully discrete (space and time) error incurred by RK methods applied to method of lines discretizations for IBVPs. The studies revealed that for IBVP with time dependent boundary conditions:
(i)~RK schemes may drop to second order temporal error \cite{Verwer1986, SanzSernaVerwer1989b};
(ii)~the global error may be smaller than the local truncation error \cite{Verwer1986, SanzSernaVerwer1989b, SanzVerwerHundsdorfer1986};
(iii)~order reduction may be improved if the solution happens to satisfy additional (but not natural) compatibility conditions at the boundaries \cite{SanzSernaVerwer1989b, SanzVerwerHundsdorfer1986, SanzSernaVerwer1989a}. For instance, \cite{Verwer1986} derived global $\ell^2$ error convergence rates for linear problems with time-dependent Dirichlet b.c.\ and demonstrated numerically that 3rd and 4th order DIRK schemes perform no better than 2nd order schemes. Many of these results are also included in \cite[Chapter~II.2]{HundsdorferVerwer2003}.

Rigorous error analyses for RK schemes applied to linear PDEs in the semi-discrete setting (in time only, space continuous) \cite{AlonsoPalencia2003, GonzalezOstermann1999, LubichOstermann1993, OstermannRoche1992} have also shown: (i)~the RK convergence order may be limited by the scheme's stage order \cite{AlonsoPalencia2003, OstermannRoche1992}; (ii)~the order (reduction) is in general fractional and depends on the b.c.\ and regularity of the solution \cite{OstermannRoche1992}; and (iii)~for parabolic equations, full convergence order is attained sufficiently far from boundaries \cite{LubichOstermann1995} and that order reduction is localized to the boundaries. Similar estimates have also been given for quasilinear parabolic \cite{LubichOstermann1995quasilinear} and nonlinear equations \cite{OstermannThalhammer2002}. Order reduction in the context of singular perturbation problems, and the interaction between the time step and the small parameter, have been examined in \cite{BoscarinoQiuRusso2018}.

Prior work has focused primarily on RK convergence rates quantifying the size of the error norms, and providing qualitative information on the error incurred in RK schemes (i.e., \cite{LubichOstermann1995} and \cite[Chapter~II.2]{HundsdorferVerwer2003} show errors are localized to domain boundaries).
\emph{The work here is complementary:}
we do not focus on direct estimates that bound the norms of the RK spatial error, but rather on the shape of the spatial error and its implications on the accuracy of quantities of practical interest (e.g., derivatives of the solution at the boundary). The key result in Theorem~\ref{thm:Meigenvalues} characterizes the RK spatial error as a singular perturbation problem with boundary layers. When that error is measured in norms, the results of prior work are recovered---however, the shape of the error provides additional important insight into how to remove order reduction, and what limitations stand in the way of removing it.

%---------------------------------------------------------------------------- 80
\subsubsection{Avoiding order reduction in IBVPs.}
%---------------------------------------------------------------------------- 80
There are several known approaches for remedying order reduction, with this work focusing on the following two.
\begin{enumerate}[i)]
\item Modified boundary conditions. Approaches to overcome order reduction for explicit RK schemes, applied to advective problems without forcing, have been proposed based on modifying the intermediate stage b.c.~\cite{CarpenterBottliebAbarbanelDon1995, AbarbanelGottliebCarpenter1996}. Further improvements have been developed in the context of conservation laws \cite{Pathria1997}. For the linear problems \eqref{eq:IBVP}, \cite{Alonso2002, AlonsoCano2004} derived high order modified b.c.\ and proved that they remedy order reduction to arbitrary order (\cite{Alonso2002} for autonomous $\diffop$ and \cite{AlonsoCano2004} for time-dependent $\diffop(t)$). Those papers also provide convergence results for $\diffop u$, demonstrating that derivatives of $u$ can be less accurate than $u$. In \S\ref{sec:MBC}, we provide a more general approach to deriving modified b.c.\ based on insights gained from viewing the RK error as a singular perturbation problem.
\item Time stepping coefficients with extra conditions. High stage order is the most straightforward condition on RK coefficients that will avoid order reduction---yet it is restrictive, and not compatible with high order DIRK schemes (see \cite[Chapter~IV.15]{HairerWanner1999}, and \S\ref{ssec:IRKbasics}). For Rosenbrock-Wanner (ROW) methods applied to linear problems, \cite{Scholz1989} devised conditions weaker than stage order that alleviate order reduction (cf.~\cite{OstermannRoche1993}). For RK schemes applied to linear IBVPs, similar conditions were stated in \cite{OstermannRoche1992} (see Remark~\ref{rem:OstermannRoche}), but no corresponding RK schemes were provided. In \S\ref{sec:WeakStageOrder} we introduce new conditions on the Butcher tableau, referred to as \emph{weak stage order} (WSO), that are sufficient for, yet simpler than, the conditions in \cite{OstermannRoche1992}. We then obtain new schemes that satisfy WSO and demonstrate that they alleviate order reduction. Note that the WSO conditions may appear formally equivalent to the ROW method conditions in \cite[Equations (3.11')]{OstermannRoche1993}; however, they apply to RK methods and are derived under a more general condition that does not assume an SDIRK structure as in \cite{OstermannRoche1993}. We also note that conditions in \cite{Rang2014} provide, in general, a subset of the WSO conditions that may improve convergence in the stiff limit. Note that related work \cite{KetchesonSeiboldShirokoffZhou2019} further develops (and characterizes the limitations of) WSO schemes satisfying the eigenvector criterion introduced in Def.~\ref{def:eig_criterion} below; however, \cite{KetchesonSeiboldShirokoffZhou2019} does not discuss the spatial manifestations of order reduction.
\end{enumerate}
In addition to the above approaches, the works \cite{SanzVerwerHundsdorfer1986} (for explicit schemes) and \cite{CalvoPalencia2002} (implicit schemes) provide a conceptually simple, yet practically complicated, methodology for avoiding order reduction: decompose the solution into one part from an IBVP that does not exhibit order reduction, and another part obtained directly from the data. We do not examine the spatial manifestation of such approaches here.

Finally, methods equivalent to multistage methods are prone to order reduction. Specifically, deferred correction methods \cite{Minion2003, BoscarinoQiuRusso2018} (see also \cite{MinionSaye2018}, and references within, for a review of order reduction in deferred correction and Gauss quadrature methods) exhibit order reduction since they can be recast as RK methods \cite{GottliebKetchesonShu2009, KetchesonWaheed2014}; similarly for extrapolation methods \cite{KetchesonWaheed2014}, Runge-Kutta-Nystr{\"o}m methods \cite{AlonsoCanoMoreta2005}, and ROW methods \cite{Scholz1989}, etc.. Multistep methods, as implied by the error estimates in \cite{Lubich1991} and \cite[Chapter IV.15]{HairerWanner1999}, do \emph{not} exhibit order reduction.

This paper is organized as follows. In \Srm\ref{sec:orderloss_gte}, based on a characterization of the spatial behavior of the global-in-time error we show that the error arises as a singular perturbation problem with BLs. Sections~\ref{sec:WeakStageOrder} and~\ref{sec:MBC} focus on remedies to order reduction: in \Srm\ref{sec:WeakStageOrder} the concept of \emph{weak stage order} is introduced, which (i)~makes the BLs that affect the final result as accurate as the scheme's order, and (ii)~is compatible with diagonally IRK (DIRK) schemes; and in \Srm\ref{sec:MBC}, the impact of \emph{modified boundary conditions} (MBC) on BLs and order reduction is studied.  Numerical results are shown in \Srm\ref{sec:NumericalExamples}, demonstrating the spatial manifestations of order reduction, and its remedies, in various examples. Generalizations are discussed in \Srm\ref{sec:outlook}.

% --------------------------------------------------------------------------- 80
% --------------------------------------------------------------------------- 80
\section{Boundary Layers in the Global Error and Order Reduction}
\label{sec:orderloss_gte}
% --------------------------------------------------------------------------- 80
% --------------------------------------------------------------------------- 80
As seen in the prior section, RK schemes can yield singular behavior, such as BLs, in the numerical solution---and this behavior serves as a root mechanism of order reduction for IBVPs. However, the existence of a singular perturbation problem in each RK stage does not strictly imply the formation of a BL, and order reduction, in the global truncation error. For example, DIRK2 and DIRK3 can produce BLs in the global error, while backward Euler (BE) does not.  This section provides an analysis that characterizes the global error behavior in space, and derives conditions under which order reduction does or does not occur.

% --------------------------------------------------------------------------- 80
\subsection{Review of Implicit Runge-Kutta Time-Stepping for IBVP}
\label{ssec:IRKbasics}
% --------------------------------------------------------------------------- 80
Here we briefly collect the key notation and results for implicit RK schemes used in the paper. The time-stepping coefficients for a general RK scheme may be represented by the Butcher notation
\begin{equation*}
	\begin{array}{c|c}
		\vecc & A\ \\ \hline
		& \vecipower{b}{T}
	\end{array}
	\,=\,
	\begin{array}{c|cccc}
		c_1 	& a_{11} &  \cdots & a_{1s}\\
		\vdots	& \vdots &         & \vdots\\
		c_s		& a_{s1} &  \cdots & a_{ss}\\ \hline
		& b_{1}  &  \cdots & b_{s}
	\end{array}\/,
\end{equation*}
where the entries of $\vecc$ are the row sums of $A$, i.e., $\vecc=A\vece$, and $\vece = (1, \ldots, 1)^T$ denotes the $s$-dimensional vector of all ones.

Diagonally implicit Runge-Kutta (DIRK) schemes are an important sub-class of implicit RK schemes. For these schemes the matrix $A$ is lower triangular; and has non-vanishing diagonal entries when $A$ is non-singular. DIRK schemes are particularly simple, because the stages can be solved sequentially, with each solve being a small modification of a backward Euler step.

An unconditionally stable RK scheme (so that the semi-discrete limit is justified) applied to the IBVP \eqref{eq:IBVP} takes the form of a BVP problem for the stage values
\begin{equation}\label{eq:dirk_step_intermediatestages}
	u^{n+1}_i = u^n + \dt \sum_{j = 1}^s a_{ij}
	\left(\diffop u^{n+1}_j + f^{n+1}_j\right)
	\quad\text{with b.c.\ for }u^{n+1}_i\/,
\end{equation}
followed by an explicit update rule for the new value
\begin{equation}\label{eq:dirk_step_finalstage}
	u^{n+1} = u^n + \dt\sum_{j=1}^s b_j \left( \diffop u^{n+1}_j+f^{n+1}_j\right)\/,
\end{equation}
for which no b.c.\ are required. Here $\dt>0$ is the time step, and $u^n$ denotes the numerical solution at time $t_n = n\dt$. The $i$-th stage solution $u^{n+1}_i$ is associated with time $t_n+c_i\dt$, as are the corresponding forcing terms $f^{n+1}_i = f(x,t_n+c_i\dt)$.

A scheme is said to have \emph{(classical) order $p\/$} if for sufficiently smooth solutions, the error obtained from a single RK step is $O(\dt^{p+1})$ (cf.~\cite{HairerNorsettWanner1993}).  This imposes a set of constraints on the RK coefficients, known as the \emph{order conditions} \cite{ButcherBook2008, HairerNorsettWanner1993}. Since we consider linear differential operators, we list here the RK order conditions for linear problems (see \cite[Chapter~II.2]{HairerNorsettWanner1993} for nonlinear problems)
\begin{equation}\label{eq:ordercondition}
	\vecipower{b}{T}A^{j}\,\vecipower{c}{k} = \frac{1}{(j+k+1)\dots(k+1)}
	\quad\text{for}\quad
	0\leq j+k \leq p-1\;\mbox{and}\; j\/,\,k\geq 0\/.
\end{equation}
Here a power of a vector applies to each component, i.e., $\vecipower{c}{k} = ((c_1)^k,\dots,(c_s)^k)^T$. For example, first order schemes require $\vecipower{b}{T}\vece = 1$, second order schemes additionally require $\vecipower{b}{T}\vecc = \tfrac{1}{2}$, while the third order conditions impose two further constraints: $\vecipower{b}{T}\vecipower{c}{2} = \tfrac{1}{3}$ and $\vecipower{b}{T} A \vecc = \tfrac{1}{6}$.
% --------------------------------------------------------------------------- 80
We now introduce the \emph{stage order residuals}, and the definition of stage order \cite[Chapter~IV.5]{WannerHairer1991}:
\begin{equation}\label{eq:StageOrderVector}
	\textrm{(Stage order residuals)} \quad\quad
    \sov{j} = A\vecipower{c}{j-1}-\frac{1}{j}\vecipower{c}{j}\/,
    		\quad j=1,2,\dots.
\end{equation}
%--------------------------------------------------------------------------- 80
\begin{definition}[Stage order] \label{def:StageOrder}
	Condition $B(\tilde{p})$: let $\tilde{p}$ be the largest number such that the quadrature condition holds
	$\vecipower{b}{T}\vecipower{c}{j-1} = j^{-1}$ for $j = 1\ldots \tilde{q}$. Condition $C(\tilde{q})$: let
	$\tilde{q}$ be the largest number such that $\sov{j} = \vec{0}$ for $j = 1\ldots \tilde{q}$.
	The \emph{stage order} of a RK scheme is $q = \min\{\tilde{p}, \tilde{q}\}$.
\end{definition}
% --------------------------------------------------------------------------- 80
It is well-known that schemes with high stage order avoid order reduction in stiff ODEs. Unfortunately, high stage order is a restrictive property for DIRK schemes:
% --------------------------------------------------------------------------- 80
\begin{remark} \label{rem:q_equal_1}
(Stage order in DIRKs)
DIRK schemes with nonzero diagonal entries are limited to stage order $q=1$~\cite{HairerNorsettWanner1993}. Moreover, DIRK schemes with singular $A$ may have stage order $q=2$ (but not higher) \cite{KetchesonSeiboldShirokoffZhou2019}. Examples are EDIRK schemes, such as Crank-Nicolson, or TR-BDF2
\cite{BankCoughranFichtnerGrosseRoseSmith1985}.
\TheoremEnd
\end{remark}
% --------------------------------------------------------------------------- 80

The stage order residuals $\sov{j}$ for $1\leq j \leq q$ become important later, even when they are nonzero. Their significance makes use of the following orthogonality property, which follows immediately from the order conditions \eqref{eq:ordercondition}:
% --------------------------------------------------------------------------- 80
\begin{proposition} \label{prop:StageOrderVectorOrthog}
For a $p$-th order RK scheme, the stage order residuals satisfy $\vecipower{b}{T}A^j \sov{k} = 0$ for $1\leq j+k \leq p-1$, with $j\geq 0$ and $k\geq 1$. In particular, $\vecipower{b}{T}\sov{k} = 0$ for $1\leq k \leq p-1$.
\end{proposition}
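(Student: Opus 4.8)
The plan is to verify the identity by directly substituting the definition \eqref{eq:StageOrderVector} of the stage order residuals and reducing everything to the linear order conditions \eqref{eq:ordercondition}. First I would expand
\[
\vecipower{b}{T} A^{j} \sov{k}
= \vecipower{b}{T} A^{j}\!\left( A\vecipower{c}{k-1} - \tfrac{1}{k}\vecipower{c}{k}\right)
= \vecipower{b}{T} A^{j+1}\vecipower{c}{k-1} - \tfrac{1}{k}\,\vecipower{b}{T} A^{j}\vecipower{c}{k},
\]
which splits the target quantity into two terms, each of exactly the form appearing on the left-hand side of \eqref{eq:ordercondition}.

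The key observation is that both terms lie in the admissible index range of the order conditions. In the first term the $A$-power is $j+1$ and the $\vecc$-power is $k-1$, so their sum is $(j+1)+(k-1)=j+k$; in the second term the sum is $j+k$ as well. Hence, under the hypothesis $1\leq j+k\leq p-1$, both terms satisfy the degree constraint $j+k\leq p-1$, while nonnegativity of the individual exponents is guaranteed by $j\geq 0$ and $k\geq 1$ (the latter being precisely what makes $k-1\geq 0$). Applying \eqref{eq:ordercondition} to each term and rewriting the product $(j+k+1)\cdots(k+1)$ in factorial form, I would obtain
\[
\vecipower{b}{T} A^{j+1}\vecipower{c}{k-1} = \frac{(k-1)!}{(j+k+1)!},
\qquad
\vecipower{b}{T} A^{j}\vecipower{c}{k} = \frac{k!}{(j+k+1)!}.
\]
Substituting these and using $\tfrac{1}{k}\,k! = (k-1)!$ shows that the two contributions coincide, so their difference vanishes, giving $\vecipower{b}{T} A^{j} \sov{k}=0$. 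The ``in particular'' claim $\vecipower{b}{T}\sov{k}=0$ for $1\leq k\leq p-1$ is then just the special case $j=0$.

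I do not anticipate a genuine obstacle: the statement is a purely algebraic consequence of the order conditions. The only point requiring care is the bookkeeping of index ranges---specifically, verifying that the shift $A^{j}\vecipower{c}{k}\mapsto A^{j+1}\vecipower{c}{k-1}$ leaves the total degree $j+k$ unchanged, so that the single hypothesis $j+k\leq p-1$ simultaneously covers both terms, and that the constraint $k\geq 1$ is exactly what keeps the exponent $k-1$ nonnegative and hence the order condition applicable.
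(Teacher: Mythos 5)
Your proof is correct and follows exactly the route the paper intends: the paper gives no explicit proof, stating only that the proposition ``follows immediately from the order conditions \eqref{eq:ordercondition}'', and your computation---substituting the definition \eqref{eq:StageOrderVector}, checking that both resulting terms $\vecipower{b}{T}A^{j+1}\vecipower{c}{k-1}$ and $\tfrac{1}{k}\vecipower{b}{T}A^{j}\vecipower{c}{k}$ fall within the index range $j+k\leq p-1$, and verifying their common value $(k-1)!/(j+k+1)!$---is precisely that omitted verification. The index bookkeeping, including the role of $k\geq 1$ in keeping the exponent $k-1$ nonnegative, is handled correctly.
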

% --------------------------------------------------------------------------- 80
Lastly we introduce notation relevant to numerical stability. The \emph{stability function} of the RK scheme is given by $R(\zeta) = 1+\zeta\vecipower{b}{T}(I - \zeta\,A)^{-1}\vece$. The value $R(\zeta)$ measures the growth $u^{n+1}/u^n$ in one step $\dt$, when applying the RK scheme to the test equation $u^\prime(t) = \lambda u$, where $\zeta = \lambda\dt$.
A RK scheme is called \emph{A-stable}, if it is stable for all stable solutions of $u^\prime(t) = \lambda u$ (i.e., $|R(\zeta)|\leq 1$ for $\mbox{Re}(\zeta)\leq 0$);
and \emph{L-stable} if also $R(\zeta)\rightarrow0$ as $\zeta\rightarrow-\infty$.
A RK scheme is called \emph{stiffly accurate} \cite{ProtheroRobinson1974}, if the last row of $A$ equals the vector $\vecipower{b}{T}$, i.e., if $a_{sj} = b_j$ for $j=1,\dots,s$.
A stiffly accurate RK scheme with invertible coefficient matrix $A$, which is A-stable, is also L-stable \cite{WannerHairer1991}, seen by evaluating the $\zeta\to -\infty$ limit of the stability function.

% --------------------------------------------------------------------------- 80
\subsection{Equations for the Approximation Error}
\label{ssec:eqnerrspace}
% --------------------------------------------------------------------------- 80
In this subsection we derive equations for the discrete-in-time RK error incurred by the IBVP \eqref{eq:IBVP}. The full analysis using the Mellin/$z$-transform has been presented in \cite{LubichOstermann1993, LubichOstermann1995}, for normed-error estimates. Our focus is to set the stage for the study of BLs via asymptotic analysis for singular perturbation problems \cite{BenderOrszag1978,kevorkian1996multiple}.
Using asymptotics, we show that order reduction (OR) is restricted to certain regions in space, and that elsewhere no OR occurs.  For simplicity, we restrict the presentation to periodic-in-time solutions of the error equations, because those suffice to capture crucial OR mechanisms. It is important to emphasize that \emph{we do not claim that order reduction happens solely for periodic solutions,} but rather that periodic solutions suffice to provide an intuitive/geometrical visualization of OR for PDEs. Nevertheless, as shown in Appendix~\ref{app:periodicsol}, under some conditions the periodic solutions contain the full OR phenomenon.

As pointed out earlier, the asymptotic analysis in this subsection is for the case where $\diffop$ in \eqref{eq:IBVP} is a second-order operator, with $\Omega = (0, 1)$ and Dirichlet b.c., i.e., $u = g$ on $\partial \Omega$. However, the concepts generalize to other differential operators and boundary conditions. The numerical examples focus on the one-dimensional case as well.

Below, let $W_e(\theta)\/$ denote the wedge in the left complex half plane defined by: $\lambda \in W_e(\theta) \Longleftrightarrow |\mbox{arg}(-\lambda)| < \theta\/$.  We further assume that both the PDE and the scheme are well-defined and stable, in the following sense:
\begin{equation}\label{eq:AssumptionsLS}
\begin{tabular}{l@{~}l}
(a) & \begin{minipage}[t]{0.78\textwidth}
There are constants $K$, $0 <\theta_1< \pi/2\/$ such that: For any $\lambda \notin W_e(\theta_1)\/$  the operator $(I-\lambda\/\diffop\/)$ with homogeneous boundary conditions has a uniformly bounded inverse: $\| (I - \lambda \diffop)^{-1}\,u\|_{L^{\infty}} \leq K \, \| u\|_{L^{\infty}}$;
\end{minipage}\\ \\[-.9em]
(b) & \begin{minipage}[t]{0.78\textwidth}
The scheme's stability region includes $W_e(\theta_2)\/$, for some $\theta_2 > \theta_1\/$.
\end{minipage}\\ \\[-.9em]
(c) & \begin{minipage}[t]{0.78\textwidth}The eigenvalues of $A\/$ have non-negative real parts.\end{minipage}\\ \\[-.9em]
(d) & \begin{minipage}[t]{0.78\textwidth} There are constants $\delta_d, c_d > 0$ such that: For any complex $|z| < \delta_d$, the matrix $\vece\/\vecipower{b}{T}+ z \/A$ is diagonalizable, and the family of eigenvector matrices $T(z)$ can be selected so that their condition number satisfies $\|T(z)\| \, \|T^{-1}(z)\| < c_d$ for $|z| < \delta_d$.\end{minipage}\\ \\[-.9em]
(e) & \begin{minipage}[t]{0.78\textwidth} The matrix $A$ is invertible, and $\vecipower{b}{T}A^{-1}\vece\neq 0$.\end{minipage}\\
\end{tabular}
\end{equation}
Condition (a) is a property of the operator $\diffop\/$ only; condition (b) is a property of the scheme in relation to the operator $\diffop\/$; and conditions (c--e) are properties solely of the numerical scheme.
Condition (c) guarantees that the scheme equations (\ref{eq:dirk_step_intermediatestages}--\ref{eq:dirk_step_finalstage}), equivalently \eqref{eq:dirk_step}, have a well defined solution at each step. It is rather natural for RK schemes, and most commonly used methods (incl.~most DIRK, Gauss, Radau, Lobatto) satisfy it.
Condition (d) is a technical assumption on the RK scheme that will be used to estimate and bound the numerical errors. Requiring a uniform bound on the condition number of $T(z)$ avoids a situation in which two of the eigenvectors (i.e., columns of $T(z)$) become parallel as $z\rightarrow 0$. Condition (d) may be alternatively stated using perturbation theory, via conditions on $A$ and an $(s-1) \times (s-1)$ matrix determined by $A$ and $\vec{b}$. For brevity, however, we leave (d) in its current form. Due to its important role below, we introduce notation for the subspace spanned by the vectors orthogonal to $\vec{b}$:
\begin{equation*}
\vec{b}^\perp = \{\vec{v}:\vecipower{b}{T}\vec{v} = 0\}.
\end{equation*}

Condition (e) is also a technical assumption on the RK scheme. Most commonly used RK schemes satisfy $\vecipower{b}{T}A^{-1}\vece\neq 0$ (in particular, all stiffly accurate schemes do so, because $\vecipower{b}{T}A^{-1}\vece = 1$). However, some schemes do violate it, for example the 2-stage 4-th order Gauss method \cite{HairerNorsettWanner1993}.  Note that some unconditionally stable schemes, such as EDIRK schemes \cite{KvaernoNorsettOwren1996}, also do not have invertible $A$.
Condition (a) is required to estimate the magnitude of the RK numerical error; it is also a numerical stability condition because it guarantees that the spectrum of $\diffop\/$ is contained within $W_e(\theta_1)$.  Condition (a) is satisfied when $\diffop$ is (strongly) elliptic and \eqref{eq:IBVP} is parabolic \cite[Chapter 2]{Pazy1983} (e.g., heat equation)\footnote{In fact, (\ref{eq:AssumptionsLS}a) is closely related to the condition required for solutions of \eqref{eq:IBVP} to be defined by an analytic semigroup.}.  In fact, the inverses of differential operators $(I-\lambda \diffop)$ are generally given by Green's functions, which are singular at the spectrum of $\diffop$ and continuous functions of $\lambda$ away from the spectrum. Specifically, when $(I - \lambda \diffop)^{-1}$ can be written in terms of Green's functions, then condition (a) requires that the Green's function be uniformly bounded, in $L^1$, for $\lambda\/$ outside $W_e(\theta_1)\/$.  Condition (a) does not hold for dispersive equations where $\diffop$ has eigenvalues on the imaginary axis.

We first formulate equations for the RK error, and then examine the equations when the exact solution to \eqref{eq:IBVP} is time-periodic.  One step  (i.e., see \cite{Albrecht1996}) of a RK scheme (\ref{eq:dirk_step_intermediatestages}--\ref{eq:dirk_step_finalstage}) can be written using $\vecipower{u}{n+1} := (u^{n+1}_1,\, \ldots, \, u^{n+1}_s)^T$ as:
\begin{align}\label{eq:dirk_step}
	\textrm{(``internal'' stages)} \quad
	\vecipower{u}{n+1} &= u^{n} \veci{e}
	+ \dt\, A \,
	\left( \diffop\,\vecipower{u}{n+1}+\vecipower{f}{n+1} \right)\/,
	\ \ \text{with b.c.\ for }\vecipower{u}{n+1}\/, \\
    \label{eq:dirk_step2}
	\textrm{(``last'' stage)} \quad
	u^{n+1} &= u^n + \dt \, \vecipower{b}{T} \left( \diffop \vecipower{u}{n+1} + \vecipower{f}{n+1} \right),
	\ \ \text{with no b.c.\ for } u^{n+1},
\end{align}
where $\vecipower{f}{n+1} := (f^{n+1}_1,\, \ldots,\, f^{n+1}_s )^T$, and
$\diffop \vecipower{u}{n+1} := (\diffop u^{n+1}_1,\, \ldots,\, \diffop u^{n+1}_s )^T$.
Denote the exact solution to \eqref{eq:IBVP} by $u^*(x,t)$.  To obtain an
equation for the propagation of the numerical error, let
$\errl^{n}(x) := u^{n}(x) - u^*(x,t_{n})$ be the error,
and  $\err^{n+1}_i(x) := u^{n+1}_i(x)-u^*(x,t_n+c_i\dt)$ be the stage error. Substituting
these expressions for the error into
(\ref{eq:dirk_step}--\ref{eq:dirk_step2}), yields:
\begin{align}\label{eq:ApproxErrIRK}
	\textrm{(``stage'' error)} \quad\quad \vecipower{\err}{n+1}
	&=
	\errl^n \, \vec{e}
	+ \dt\,A\,
	\diffop\,\vecipower{\err}{n+1}
	+ \vecipower{\lte}{n},
	\quad\quad\;\;\text{with b.c.~for}
	\;\;
	\vecipower{\err}{n+1}, \\ \label{eq:ApproxErrIRK1}
	\textrm{(``last'' error)} \quad\quad
	\errl^{n+1} &=
	\errl^n + \dt \, \vecipower{b}{T} \, \diffop\,
	\vecipower{\err}{n+1} + \ltel^{n},
	\quad\quad\;\;\textrm{ with no b.c.~for}\;\; \errl^{n+1}, \\ \nonumber
	\textrm{where} \quad\quad
	\vecipower{\err}{n+1} &= \left(\err^{n+1}_1\/,\,\dots\/,\,\err^{n+1}_s\right)^T,
	\quad\quad\;\;\textrm{and} \quad
	\vecipower{\lte}{n} = \left(\lte_1^n,\dots,\lte_s^n\right)^T.
\end{align}	
Here $\vecipower{\lte}{n}$ and $\ltel^n$ are the local truncation errors (LTEs), and involve only $u^*$ and $\vec{f}$. Formulas for the LTEs can then be obtained using the PDE \eqref{eq:IBVP}, and Taylor expanding $u^*$ (at $t_n$):
\begin{equation}\label{eq:dirk_step_LTE_formula}
	\vecipower{\lte}{n}(x) =
	\displaystyle\sum_{j\geq 1} \frac{\partial_t^ju^*(x,t_n) \, \dt\,^j}{(j-1)!}  \; \sov{j},
	\quad
	\ltel^{n}(x) = \displaystyle\sum_{j\geq 1} \frac{\partial_t^ju^*(x,t_n) \, \dt\,^j}{(j-1)!} \; \big(\vecipower{b}{T}\vecipower{c}{j-1} - \frac{1}{j} \big).
\end{equation}
Here $\sov{j}$ are the stage order residuals defined in \eqref{eq:StageOrderVector}.
One should stress that equations (\ref{eq:ApproxErrIRK}--\ref{eq:dirk_step_LTE_formula}) hold for linear problems only, i.e., \eqref{eq:IBVP}. For a $p$-th order Runge-Kutta scheme with stage order $q$, the first $q\le p$ summands in $\vecipower{\lte}{n}$ vanish; thus $\lte_j^n = O(\dt\,^{q+1})$ ($1\leq j\leq s$).  Meanwhile, the $p$-th order conditions guarantee that $\ltel^{n}=O(\dt\,^{p+1})$ (and hence $\lte_s^n = \ltel^{n} = O(\dt\,^{p+1})$ for stiffly accurate schemes).
% --------------------------------------------------------------------------- 80
For the remainder of~\S\ref{sec:orderloss_gte}, we assume conventional b.c., i.e., $g_i^{n+1} = g(t_n + c_i\,\dt)\/$; equivalently, this yields homogeneous b.c.\ for the error $\vecipower{\err}{n}$ in \eqref{eq:ApproxErrIRK}.
Time-periodic solutions to the IBVP \eqref{eq:IBVP} can be obtained when the forcing and b.c.\ have the form $f=\hat{f}\,e^{\imath\/\omega\/t}\/$ and $g=\hat{g}\,e^{\imath\/\omega\/t}\/$, where $\hat{f}$ and $\hat{g}$ are functions defined on $\Omega\/$ and $\partial\Omega\/$, respectively, and $\omega\/$ is a (real-valued) constant. Then $u^*(x,t) = U^*(x) e^{\imath \omega t}$ is the periodic solution to \eqref{eq:IBVP}, where $U^*$ is the (unique because of (\ref{eq:AssumptionsLS}a)) solution to the BVP $(\imath\/\omega -\diffop)U^* = \hat{f}\/$ with b.c.~$\mathcal{B}\,U^* = \hat{g}\/$. Since RK schemes are linear, time-harmonic forcings and boundary data will also yield periodic numerical solutions.  To obtain periodic solutions for $\vecipower{\err}{n}$ and $\errl^n$ when $u^*(x,t) = U^*(x) e^{\imath \omega t}$, we seek an ansatz of the form (with a slight abuse of notation):
\begin{equation}\label{periodic_ansatz}
	\vecipower{\err}{n}(x) = \veci{\errs}(x) \; z^n, \quad \quad
	\errl^n(x) = \errl(x) \; z^n, \quad \quad
	\vecipower{\lte}{n}(x) = \veci{\ltes}(x) \; z^n, \quad \quad
	\ltel^n(x) = \ltel(x) \; z^n,
\end{equation}
where $z := e^{\imath \omega \dt}$, and
\begin{equation}\label{eq:ltes_vector}
	\veci{\ltes}(x) = U^*(x) \;
	\displaystyle\sum_{j\geq 1}\frac{(\imath \omega)^j \dt\,^j}{(j-1)!}
	\sov{j}, \quad\quad
	\ltel(x) = U^*(x) \;
	\displaystyle\sum_{j\geq 1}\frac{(\imath \omega)^j \dt\,^j}{(j-1)!}
	\left( \vecipower{b}{T}\vecipower{c}{j-1}-\frac{1}{j} \right).
\end{equation}
Substituting \eqref{periodic_ansatz} into \eqref{eq:ApproxErrIRK} and \eqref{eq:ApproxErrIRK1} yields the coupled system for $(\errl, \veci{\err})$:
\begin{equation}\label{eq:MatsystemError}
	\underbrace{\begin{pmatrix}
		1 & 0 \\
		-z^{-1} \vec{e} & I
	\end{pmatrix}}_{\MatB}		
	\begin{pmatrix}
		\errl \\
		\veci{\err}
	\end{pmatrix}	
	-\underbrace{
	\begin{pmatrix}
		0 & (z-1)^{-1} z \dt \, \vecipower{b}{T} \\
		\vec{0} &  \dt \, A
	\end{pmatrix}}_{\BlockM}
	\begin{pmatrix}
		\diffop \errl \\
		\diffop \veci{\err}
	\end{pmatrix}
	=
	\begin{pmatrix}
		(z-1)^{-1} \ltel \\
		z^{-1} \veci{\lte}
	\end{pmatrix}.
\end{equation}
Equation \eqref{eq:MatsystemError} is supplemented with $s$ boundary conditions for $\vec{\err}$, and no boundary conditions for $\errl$. Hence, \eqref{eq:MatsystemError} is actually a differential algebraic equation. The block components in \eqref{eq:MatsystemError} for the differential equation (which involve only the stage errors $\vec{\err}$) may be separated from the algebraic equation (which couple the stage and function errors $\vec{\err}$, $\errl$) by simultaneously transforming $\MatB$ and $\BlockM$ into upper block triangular form. To do this, we first multiply \eqref{eq:MatsystemError} through on the left by a matrix $S_\Gamma$, which block-diagonalizes $\Gamma$, followed by the matrix $\MatD$, where
\begin{equation*}
	\SchurBlock =
	\begin{pmatrix}
		1 & -z \, \vecipower{\tempw}{T} \\
		0 & I
	\end{pmatrix}, \quad
	\MatD =
	\begin{pmatrix}
		1 & 0 \\
		0 & I + \vec{e}\, \vecipower{\tempw}{T}
	\end{pmatrix}
	\begin{pmatrix}
		1 & 0 \\
		z^{-1} \vec{e} & I	
	\end{pmatrix}		
	\begin{pmatrix}
		1 + \vecipower{\tempw}{T}\vec{e}  & 0 \\
		0 & I
	\end{pmatrix}^{-1}, \quad
	\vecipower{\tempw}{T} := \frac{\vecipower{b}{T}A^{-1}}{z-1}.
\end{equation*}
Here we have used that $A$ is invertible to define $\vecipower{\tempw}{T}$. Multiplication yields
\begin{equation}\label{eq:MatsystemError2}
	\underbrace{\begin{pmatrix}
		1 & -\vecipower{\Erralpha}{T} \\
		0 & I
	\end{pmatrix}}_{\MatD \SchurBlock \MatB}		
	\begin{pmatrix}
		\errl \\
		\veci{\err}
	\end{pmatrix}	
	-
	\underbrace{\begin{pmatrix}
		0 & 0 \\
		\vec{0} & M
	\end{pmatrix}}_{\MatD \SchurBlock \BlockM}
	\begin{pmatrix}
		\diffop \errl \\
		\diffop \veci{\err}
	\end{pmatrix}
	=
	\begin{pmatrix}
		\zeromode \\
		\veci{\VecErr}
	\end{pmatrix},	
\end{equation}
where the \emph{\matrixM}
\begin{equation}\label{eq:MatrixM}
		M := \dt\,A + \frac{\dt}{z-1}\,\vece\;\vecipower{b}{T}\/, \quad\quad \textrm{and} \quad\quad
		\vecipower{\Erralpha}{T} := \frac{z \vecipower{b}{T} A^{-1} }{z - 1 + \vecipower{b}{T}A^{-1}\vec{e}},
\end{equation}
appear in the block matrices of \eqref{eq:MatsystemError2}, while
\begin{equation}\label{eq:ErrorFunctions}
	\veci{\VecErr}(x) := \frac{1}{z}\left( \veci{\lte}(x) + \frac{\ltel(x)}{z-1}\veci{e}\right)\/,
	\quad\;
	\zeromode(x) := \frac{1}{z-1 + \vecipower{b}{T} A^{-1} \veci{e}}
	\left( -\vecipower{b}{T} A^{-1} \veci{\lte}(x) + \ltel(x) \right).
\end{equation}
In \eqref{eq:MatsystemError2}, multiplication by $\SchurBlock$ converts $\SchurBlock \, \BlockM$ into a block diagonal matrix; multiplication by $\MatD$ converts $\MatD \SchurBlock \MatB$ into row echelon form while preserving the block structure of $\SchurBlock \BlockM$. Equation \eqref{eq:MatsystemError2} is significant since it allows one to extract the spatial RK error $\errl(x)$. Working out the components of \eqref{eq:MatsystemError2}, the bottom block row yields an $s$-dimensional partial differential equation for the stage errors:
\begin{equation}\label{eq:IRK_mat_eqn}
 \veci{\errs} - M\,\diffop\,\veci{\errs}
 = \frac{1}{z}\left( \veci{\lte} + \frac{\ltel}{z-1}\veci{e}\right),
 \quad \text{with b.c.\ } \veci{\errs} = 0,
\end{equation}
while the top row yields one algebraic expression for the global error $\errl$ in terms of the stage error vector:
\begin{equation}\label{eq:IRK_mat_eqn2}
	\errl(x) = \vecipower{\Erralpha}{T} \veci{\errs} + \zeromode
	= \frac{1}{z-1 + \vecipower{b}{T} A^{-1} \veci{e}}
	\left( z \; \vecipower{b}{T}A^{-1}\veci{\errs} -
	\vecipower{b}{T} A^{-1} \veci{\lte} + \ltel \right).
\end{equation}
To recap, the spatial error vector for the RK stages $\veci{\err}$ (corresponding to the time-periodic response of the error equation) satisfies the BVP system \eqref{eq:IRK_mat_eqn}, in which the \matrixM{}~$M$ pre-multiplies the operator term $\diffop\,\veci{\errs}$. The error of the RK scheme $\errl(x)$ is then computed by evaluating the update rule \eqref{eq:IRK_mat_eqn2}.

For schemes with singular $A$ (see Remark~\ref{rem:q_equal_1}), one could still obtain \eqref{eq:MatsystemError2} by first row-reducing $\BlockM$. Singular $A$, however will yield multiple algebraic equations analogous to \eqref{eq:IRK_mat_eqn} coupling $\veci{\err}$ and $\errl$, and a lower dimensional PDE system analogous to \eqref{eq:IRK_mat_eqn}.

% --------------------------------------------------------------------------- 80
\subsection{Spectral Properties of the \MatrixM}
\label{ssec:EigenValuesofM}
% --------------------------------------------------------------------------- 80
Here we examine the periodic-in-time error equation~\eqref{eq:IRK_mat_eqn} in the eigenbasis defined by the matrix $M$. We carry out an asymptotic analysis and demonstrate that the RK error generically satisfies a singular perturbation problem. We first estimate the eigenvalues and eigenvectors of $M$ for $z = e^{\imath \omega \dt}$ on the unit circle, where $\omega\/$ is real and $\dt$ is small. Of particular relevance is the existence of small eigenvalues for $M$, as they give rise to singularly perturbed BVPs (producing BLs and, potentially, other effects associated with singular BVPs). We call an eigenvalue $\lambda\neq 0$ of $M$ \emph{small} if $\lambda\rightarrow 0$ as $\dt\rightarrow 0$.

Since $z = e^{\imath \omega \dt}$, for $\dt\ll 1$, expanding $\frac{\dt}{z-1}$ in powers of $\dt$ yields
\begin{equation}\label{eq:MatrixM_expansion}
	M = \frac{1}{\imath \omega}\,\vece\,\vecipower{b}{T}
	+ \dt\,(A-\frac{1}{2}\,\vece\,\vecipower{b}{T}) + O(\dt\,^2)\/.
\end{equation}
Because $\vece\,\vecipower{b}{T}$ is a rank-1 matrix with eigenvalue 1 (since $\vecipower{b}{T}\vece = 1$), all eigenvalues of $M$ vanish as $\dt\to 0$, except for one that (due to numerical consistency) is equal to $(\imath \omega)^{-1}$. How these zero eigenvalues are approached as $\dt \to 0$ depends on the structure of $A$ and $\vec{b}$.
% --------------------------------------------------------------------------- 80
\begin{theorem}[Asymptotic eigenvalues of $M$] \label{thm:Meigenvalues}
For $0<\dt\ll 1\/$ and a fixed $\omega\in\mathbb{R}$ such that $z = e^{\imath \omega \dt} \neq 1\/$, the matrix $M\/$, defined in \eqref{eq:MatrixM} satisfies:
\begin{enumerate}[\quad(1)]
\item It has one $O(1)\/$ eigenvalue, which is at most $O(\dt)\/$ away from $\frac{1}{\imath \omega}\/$. The leading order part for the corresponding right and left eigenvectors are $\vece\/$ and $\vec{b}$.
\item It has $s-1\/$ (including multiplicity) small, but nonzero eigenvalues, of magnitude at most $O(\dt)\/$. They have the form $\lambda = \dt \mu_0 + o(\dt)$. Here $\mu_0$ are the eigenvalues of the matrix $QA$ restricted to $\vec{b}^{\perp}$ (denoted by $B$), where $Q = I - \vece\,\vecipower{b}{T}\/$ is the projection onto $\vec{b}^{\perp}\/$ along $\vece$.
\item It has no zero eigenvalues.
\end{enumerate}
\end{theorem}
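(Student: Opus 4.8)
The plan is to factor out the scalar prefactor of $M$ and reduce the entire statement to a (degenerate) matrix perturbation problem. Setting $\zeta := z-1$, note from \eqref{eq:MatrixM} that
\begin{equation*}
M = \frac{\dt}{z-1}\bigl(\vece\,\vecipower{b}{T} + \zeta\,A\bigr) =: \frac{\dt}{z-1}\,N(\zeta),
\end{equation*}
so the eigenvalues of $M$ are exactly $\tfrac{\dt}{z-1}$ times those of $N(\zeta)$, while the right/left eigenvectors coincide. Since $z=e^{\imath\omega\dt}$, one has $\zeta = \imath\omega\dt + O(\dt^2)\to 0$ and $\tfrac{\dt}{z-1} = \tfrac{1}{\imath\omega}+O(\dt)$ as $\dt\to 0$. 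In particular $|\zeta|<\delta_d$ for $\dt$ small, so $N(\zeta)$ is precisely the matrix appearing in assumption~(\ref{eq:AssumptionsLS}d): it is diagonalizable with uniformly bounded eigenvector condition number. Thus it suffices to locate the eigenvalues of $N(\zeta)$ for small $\zeta$ and rescale by $\tfrac{\dt}{z-1}$.

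I would next record the unperturbed structure. At $\zeta=0$ the matrix $N(0)=\vece\,\vecipower{b}{T}$ is rank one, and using $\vecipower{b}{T}\vece=1$ it has the simple eigenvalue $1$ (right eigenvector $\vece$, left eigenvector $\vecipower{b}{T}$) together with the eigenvalue $0$ of algebraic and geometric multiplicity $s-1$, whose eigenspace is $\vec{b}^{\perp}$. The eigenprojection onto that null space is exactly $Q = I-\vece\,\vecipower{b}{T}$, which projects onto $\vec{b}^{\perp}$ along $\vece$. Because the zero eigenvalue is \emph{semisimple}, there are no Jordan blocks at $\zeta=0$; assumption~(\ref{eq:AssumptionsLS}d) guarantees this persists for small $\zeta$, which is what rules out non-analytic (fractional-power) splitting and legitimizes the first-order expansions below.

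For part~(1), the simple eigenvalue $1$ of $N(0)$ perturbs regularly, yielding an eigenvalue $\lambda_1(\zeta)=1+O(\zeta)$ of $N(\zeta)$ with right and left eigenvectors $\vece+O(\zeta)$ and $\vecipower{b}{T}+O(\zeta)$; rescaling gives the $M$-eigenvalue $\tfrac{\dt}{z-1}\lambda_1 = \tfrac{1}{\imath\omega}+O(\dt)$ with the stated leading-order eigenvectors. For part~(2), I would invoke degenerate (semisimple) first-order perturbation theory. Since $N(\zeta)=N(0)+\zeta A$ is \emph{exactly} linear in $\zeta$, the perturbation is $\zeta A$, and the $s-1$ eigenvalues emanating from $0$ are, to leading order, $\zeta$ times the eigenvalues of the reduced map $QAQ$ restricted to $\mathrm{range}(Q)=\vec{b}^{\perp}$. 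As $Q$ acts as the identity on $\vec{b}^{\perp}$, this reduced map is precisely $B=QA|_{\vec{b}^{\perp}}$, so these eigenvalues are $\zeta\mu_0+o(\zeta)$ with $\mu_0\in\mathrm{spec}(B)$. Rescaling by $\tfrac{\dt}{z-1}=\tfrac{1}{\imath\omega}+O(\dt)$ then gives $\dt\,\mu_0+o(\dt)$, as claimed.

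Finally, part~(3) follows cleanly from assumption~(\ref{eq:AssumptionsLS}e). Since $M=\tfrac{\dt}{z-1}N(\zeta)$ with $\tfrac{\dt}{z-1}\neq 0$, it is enough that $N(\zeta)$ be nonsingular. If $N(\zeta)\vec{v}=0$ with $\vec{v}\neq 0$, the rank-one structure gives $(\vecipower{b}{T}\vec{v})\,\vece=-\zeta A\vec{v}$: the case $\vecipower{b}{T}\vec{v}=0$ forces $\vec{v}=0$ (as $\zeta\neq 0$ and $A$ is invertible), while $\vecipower{b}{T}\vec{v}\neq 0$ gives $\vec{v}=-\tfrac{\vecipower{b}{T}\vec{v}}{\zeta}A^{-1}\vece$, and applying $\vecipower{b}{T}$ forces the single relation $\zeta=-\vecipower{b}{T}A^{-1}\vece$. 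As this is a fixed nonzero constant, it is avoided by $\zeta=z-1$ for $\dt$ small, so $N(\zeta)$ (hence $M$) is invertible; equivalently, each $\mu_0\neq 0$, since $B\vec{v}=0$ would give $A\vec{v}\in\mathrm{span}(\vece)$, i.e.\ $\vec{v}=\alpha A^{-1}\vece$, whence $\vecipower{b}{T}\vec{v}=\alpha\,\vecipower{b}{T}A^{-1}\vece=0$ forces $\alpha=0$ by~(\ref{eq:AssumptionsLS}e). I expect the main obstacle to be the degenerate perturbation theory of part~(2): verifying that the $(s-1)$-fold zero eigenvalue splits \emph{linearly} in $\zeta$ with coefficients governed by $B$, even when $B$ has repeated eigenvalues, which is exactly where the diagonalizability and bounded condition number of~(\ref{eq:AssumptionsLS}d) are essential.
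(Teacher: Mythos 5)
Your proof is correct, and its skeleton is the same as the paper's: you factor $M = \frac{\dt}{z-1}\,(\vece\,\vecipower{b}{T} + \zeta A)$ with $\zeta = z-1$ (the paper writes $M = \frac{\dt}{z-1}M^*$ with $M^* = \vece\,\vecipower{b}{T} + \delta A$) and treat everything as a perturbation of the rank-one matrix $\vece\,\vecipower{b}{T}$, arriving at the same reduced operator $B = QA|_{\vec{b}^{\perp}}$. The one genuine methodological difference is in part (2): where you invoke semisimple degenerate perturbation theory (Kato's reduction process) to obtain the splitting $\zeta\mu_0 + o(\zeta)$, the paper instead computes the characteristic polynomial explicitly in the basis $T_0 = [\vece,\, O_b]$, obtaining $\det(M^* - \lambda^* I) = \delta^{s-1}\left(\det(B - \mu I) + \delta\,\det(A - \mu I)\right)$ and concluding by continuity of polynomial roots --- an elementary, self-contained route that needs no spectral theory at all. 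This exposes the one misattribution in your write-up: assumption (\ref{eq:AssumptionsLS}d) is \emph{not} what legitimizes the first-order splitting. Semisimplicity of the zero eigenvalue of $\vece\,\vecipower{b}{T}$ alone guarantees $\lambda = \zeta\mu_0 + o(\zeta)$ even when $B$ has repeated eigenvalues (fractional-power behavior can appear only inside the $o(\zeta)$ remainder), and the paper's determinant argument uses no diagonalizability whatsoever; in the paper, (d) enters only later, in Corollary~\ref{cor:invertibility}, to bound $(I - M\diffop)^{-1}$ uniformly. Since (d) is a standing hypothesis, your appeal to it is harmless, merely unnecessary. Two smaller deviations are worth noting: for part (3) you exclude $\zeta = -\vecipower{b}{T}A^{-1}\vece$ using assumption (\ref{eq:AssumptionsLS}e) together with $\zeta \to 0$, whereas the paper uses $|z| = 1$, the realness of $\vecipower{b}{T}A^{-1}\vece$, and $z \neq \pm 1$ --- both are valid, and yours is arguably cleaner for $\dt \ll 1$; and your closing observation that (e) forces $B$ to be invertible, i.e.\ every $\mu_0 \neq 0$, is a useful strengthening not contained in the theorem itself (part (3) gives $\lambda \neq 0$ for each fixed small $\dt$, which does not by itself imply $\mu_0 \neq 0$), even though the paper later relies on exactly this fact in \S\ref{ssec:orderresults}.
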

% --------------------------------------------------------------------------- 80
\begin{proof}
We first prove (3).  Assume $M$ has a zero eigenvalue. Then there exists a nonzero vector $\veci{\ell}$ such that $M\,\veci{\ell} = \dt\,A\,\veci{\ell} + \frac{\dt}{z-1}\, (\vecipower{b}{T}\veci{\ell})\,\vece = \veci{0}$, hence $A\,\veci{\ell}\parallel\vece$. But because $A$ is non-singular, there is a uniquely defined (up to scaling) eigenvector: $\veci{\ell}^* = A^{-1}\vece$. Hence,
\begin{align}\label{Eq:zConstraint}
M\,\veci{\ell}^*= \dt\Big(1+ \frac{\vecipower{b}{T} A^{-1}\vece}{z-1}\Big)\vece = \veci{0}, \quad\quad \Longrightarrow \quad\quad z=1-\vecipower{b}{T}A^{-1}\,\vece.
\end{align}
The constraint \eqref{Eq:zConstraint} can only occur for $\vecipower{b}{T} A^{-1}\vece=2$ and $z=-1$, or $\vecipower{b}{T} A^{-1}\vece=0$ and $z = 1$ (because $\vecipower{b}{T} A^{-1}\vece$ is real and $|z| = 1$). Neither case is possible when $\dt \ll 1$ since $0 < |\omega\dt| < \pi$ implies $z\neq \pm 1$.

Item (1) results from viewing $M$ as a perturbed rank-1 matrix. The matrix $\frac{1}{\imath \omega}\,\vece\,\vecipower{b}{T}$ is diagonalizable and therefore the Bauer-Fike theorem \cite{Bauer1960} implies that $M$ has one eigenvalue $O(\dt)$ away from $(\imath \omega)^{-1}$, and $(s-1)$ eigenvalues of size $O(\dt)$. Since the $(\imath \omega)^{-1}$ eigenvalue is simple (non-repeated), the corresponding eigenvector is, to leading order, $\vec{v}^{(0)} = \vece$ \cite[Chapter~V~2.3]{StewartSun1990}.  The remaining parts for (2) are not as straightforward as (1) since $\lambda = 0$ is a degenerate eigenvalue of $M$ when $\dt = 0$.

To show (2), we write $M = \frac{\dt}{z-1} M^*$, where $M^* := \vece\, \vecipower{b}{T} + \delta A$ and $\delta = z-1$, so that when $\dt \ll 1$, we have $|\delta| \ll 1$. In addition, we write the eigenvalues of $M^*$ as $\lambda^* = \delta \mu$ (with the corresponding eigenvalues of $M$ being $\dt\,\mu$). We now work in a coordinate basis defined by the eigenvectors of the unperturbed matrix $\vece\,\vecipower{b}{T}$. Let $O_b \in \mathbb{R}^{s \times (s-1)}$ be a matrix whose columns form an orthonormal basis for $\vec{b}^{\perp}$. Set $T_0 = [\vece,\; O_b]$. Then $T_0^{-1} = [\vec{b},\; Q^T \/ O_b]^{T}$, because $\vecipower{b}{T}\vece = 1$, $\vecipower{b}{T}O_b = 0$, $(O_b)^T Q \vece = 0$, and $(O_b)^T Q O_b = (O_b)^T O_b = I$. Using $T_0$ as a similarity transformation, the transformed rank-1 matrix becomes $( T_0^{-1} \vece\,\vecipower{b}{T} T_0)_{ij} = \delta_{i1} \delta_{j1}$, where $\delta_{ij}$ is the Kronecker delta. The characteristic equation for $M^*$ then follows from a direct computation of the corresponding determinant:
\begin{equation}\label{Eq:FirstDetLine}
\begin{split}
\det(M^* \!-\! \lambda^* I) &= \det ( T_0^{-1} M^* T_0 - \delta \mu I )
= \det \left( \delta_{i1} \delta_{j1} + \delta \; T_0^{-1} (A - \mu I) T_0 \right) \\
&= \delta^{s-1} \/ \left( \det(B- \mu I)+ \delta \/ \det (A - \mu I) \right).
\end{split}
\end{equation}
Here $B = (O_b)^T \/ QA\/ O_b$ is the bottom right $(s-1)\times (s-1)$ block of $T_0^{-1} \/ A \/ T_0$. The computation shows that the $(s-1)$ eigenvalues $\mu$ are to within $o(1)$ of $\mu_0$, where $\mu_0$ denotes the roots of $\det(B - \mu_0 I) = 0$.
\end{proof}
% --------------------------------------------------------------------------- 80

Without loss of generality, we label the eigenvalues of $M$ in such a way that $\lambda_1 = O(1)$, and $\lambda_2$ through $\lambda_s$ are small but nonzero. The properties below on the matrix $M$ will be used to estimate the size and shape of the RK error in \Srm\ref{ssec:eigenmodeanalysis}.

% --------------------------------------------------------------------------- 80
\begin{proposition}[Eigenvectors of $M$]\label{prop:1stLeftEigenVector}
For a $p$-th order RK scheme, let $\vecipower{\ell}{T}_i$ and $\veci{r}_i$ be the left and right eigenvectors of $M$ associated with $\lambda_i$, normalized so that $\vecipower{\ell}{T}_i \veci{r}_j = \delta_{ij}$ for $1\leq i, j\leq s$.
Then $\vecipower{\Erralpha}{T}\veci{r}_j$ is $O(1)$ or smaller; and $\vecipower{\ell}{T}_1\veci{h} = O(\dt\,^p)$. Here $\veci{\Erralpha}$ and $\veci{h}\/$ are defined in (\ref{eq:MatrixM}--\ref{eq:ErrorFunctions}).
\end{proposition}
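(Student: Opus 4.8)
The plan is to treat the two claims separately: the first is a boundedness estimate, while the second rests on an explicit resolvent formula for the left eigenvector $\vecipower{\ell}{T}_1$. Throughout, I normalize the eigenvectors biorthonormally and use condition (\ref{eq:AssumptionsLS}d). Since the eigenvectors of $M$ coincide with those of $\vece\vecipower{b}{T}+(z-1)A$ (recall $M=\tfrac{\dt}{z-1}M^\ast$ from the proof of Theorem~\ref{thm:Meigenvalues}), condition (d)---whose small parameter is here played by $z-1$---bounds the condition number of the eigenvector matrix uniformly for small $\dt$. Rescaling columns/rows by the geometric mean of the two norms then lets me take $\|\veci{r}_j\|=O(1)$ and $\|\vecipower{\ell}{T}_j\|=O(1)$ while preserving $\vecipower{\ell}{T}_i\veci{r}_j=\delta_{ij}$.

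For the first claim I would read off from \eqref{eq:MatrixM} that the denominator of $\vecipower{\Erralpha}{T}$ is $z-1+\vecipower{b}{T}A^{-1}\vece=\vecipower{b}{T}A^{-1}\vece+O(\dt)$, which is $O(1)$ and bounded away from zero for small $\dt$ by condition (\ref{eq:AssumptionsLS}e), while the numerator $z\,\vecipower{b}{T}A^{-1}$ is a fixed $O(1)$ row vector. Hence $\vecipower{\Erralpha}{T}=O(1)$, and $\vecipower{\Erralpha}{T}\veci{r}_j=O(1)$ follows at once from $\|\veci{r}_j\|=O(1)$.

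The second claim is the substantive part, and its crux is an exact formula for $\vecipower{\ell}{T}_1$ obtained from the eigenrelation $\vecipower{\ell}{T}_1 M=\lambda_1\vecipower{\ell}{T}_1$. Writing $M=\dt A+\tfrac{\dt}{z-1}\vece\vecipower{b}{T}$ and collecting terms gives $\vecipower{\ell}{T}_1(\lambda_1 I-\dt A)=\tfrac{\dt}{z-1}(\vecipower{\ell}{T}_1\vece)\,\vecipower{b}{T}$, so that
\[
\vecipower{\ell}{T}_1 = C_0\,\vecipower{b}{T}(\lambda_1 I-\dt A)^{-1}, \qquad C_0:=\tfrac{\dt}{z-1}(\vecipower{\ell}{T}_1\vece).
\]
Since $\lambda_1=(\imath\omega)^{-1}+O(\dt)$ is $O(1)$ and nonzero by Theorem~\ref{thm:Meigenvalues}(1), the resolvent is invertible for small $\dt$; the biorthonormalization with $\veci{r}_1=\vece+O(\dt)$ then yields $C_0=O(1)$ and $\vecipower{\ell}{T}_1\vece=1+O(\dt)$. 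The value of this formula is that the Neumann expansion $(\lambda_1 I-\dt A)^{-1}=\sum_{k\geq 0}\dt^k\lambda_1^{-(k+1)}A^k$ produces precisely the row vectors $\vecipower{b}{T}A^k$ whose pairing with the stage-order residuals is controlled by Proposition~\ref{prop:StageOrderVectorOrthog}. With this in hand I split $\vecipower{\ell}{T}_1\veci{h}$ via \eqref{eq:ErrorFunctions}--\eqref{eq:ltes_vector}. For the $\sov{j}$ part, $\vecipower{\ell}{T}_1\sov{j}=C_0\sum_{k\geq 0}\dt^k\lambda_1^{-(k+1)}\vecipower{b}{T}A^k\sov{j}$; Proposition~\ref{prop:StageOrderVectorOrthog} annihilates every term with $k+j\leq p-1$, so $\vecipower{\ell}{T}_1\sov{j}=O(\dt^{\,p-j})$ for $1\leq j\leq p-1$, and the factor $\dt^j$ appearing in $\veci{\lte}$ upgrades each term to $O(\dt^p)$; terms with $j\geq p$ are already $O(\dt^j)=O(\dt^p)$, and the factorial weights render the series convergent with dominant balance $O(\dt^p)$. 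For the $\ltel$ part, the quadrature order conditions ($\vecipower{b}{T}\vecipower{c}{j-1}=j^{-1}$ for $1\leq j\leq p$) give $\ltel=O(\dt^{\,p+1})$, so $\tfrac{\ltel}{z-1}=O(\dt^p)$, and multiplying by $\vecipower{\ell}{T}_1\vece=O(1)$ preserves this; combining both pieces with the $O(1)$ factor $z^{-1}$ gives $\vecipower{\ell}{T}_1\veci{h}=O(\dt^p)$.

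The main obstacle is exactly this second claim: knowing only the leading-order eigenvector $\vecipower{\ell}{T}_1\approx\vec{b}$ is insufficient, since $\vecipower{b}{T}\sov{j}=0$ alone yields merely $\dt^j\vecipower{\ell}{T}_1\sov{j}=O(\dt^{\,j+1})$, far short of $O(\dt^p)$ for small $j$. The resolvent formula is what captures all higher-order corrections simultaneously and converts the \emph{full} orthogonality $\vecipower{b}{T}A^k\sov{j}=0$ of Proposition~\ref{prop:StageOrderVectorOrthog} into the needed cancellations. The only other delicate point, underlying the first claim, is the uniform control of the eigenvector norms, which is precisely what condition (\ref{eq:AssumptionsLS}d) supplies.
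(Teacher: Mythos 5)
Your proof is correct, and on the substantive second claim it takes a genuinely different route from the paper. The paper proves $\vecipower{\ell}{T}_1\veci{h}=O(\dt^p)$ perturbatively: it posits the expansion $\veci{\ell}_1=\veci{b}+\sum_{j=1}^{p-1}\dt^j\veci{\beta}_j+O(\dt^p)$, substitutes into $\vecipower{\ell}{T}_1M=\lambda_1\vecipower{\ell}{T}_1$, and collects powers of $\dt$ to show each $\vecipower{\beta}{T}_j$ lies in the span of $\vecipower{b}{T}A^m$, $0\leq m\leq j$, before invoking Proposition~\ref{prop:StageOrderVectorOrthog} to get $\vecipower{\ell}{T}_1\sov{k}=O(\dt^{p-k})$. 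You instead derive the \emph{exact} closed form $\vecipower{\ell}{T}_1=C_0\,\vecipower{b}{T}(\lambda_1 I-\dt A)^{-1}$ directly from the rank-one structure of $M-\dt A$, and then the Neumann series delivers the vectors $\vecipower{b}{T}A^k$ at all orders simultaneously, so the same orthogonality yields $\vecipower{\ell}{T}_1\sov{j}=O(\dt^{p-j})$ without any order-by-order bookkeeping. Your route buys two things: it sidesteps justifying the formal eigenvector expansion (the fact that the corrections are combinations of $\vecipower{b}{T}A^m$ is automatic rather than argued inductively), and it is insensitive to the $\dt$-dependence of $\lambda_1$, since the annihilation $\vecipower{b}{T}A^k\sov{j}=0$ is exact and the coefficients $\lambda_1^{-(k+1)}$ are merely uniformly $O(1)$ (note $\lambda_1$ is bounded away from the $O(\dt)$ spectrum of $\dt A$, which also shows $\vecipower{\ell}{T}_1\vece\neq 0$, validating the formula). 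The paper's expansion, in turn, matches the asymptotic style used elsewhere (e.g., Appendix~\ref{app:proof}) and does not require writing down a resolvent. On the first claim ($\vecipower{\Erralpha}{T}\veci{r}_j=O(1)$) your argument is essentially the paper's, with the useful addition that you make explicit how Assumption~(\ref{eq:AssumptionsLS}d)---applied to $\vece\,\vecipower{b}{T}+(z-1)A$ with small parameter $z-1$, exactly as in Corollary~\ref{cor:invertibility}---supplies the uniform $O(1)$ bounds on the biorthonormalized eigenvector norms, a point the paper leaves implicit in the phrase ``to leading order in $\dt$.'' Your handling of the $\ltel$ term ($\ltel/(z-1)=O(\dt^p)$, multiplied by $\vecipower{\ell}{T}_1\vece=1+O(\dt)$) coincides with the paper's.
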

% --------------------------------------------------------------------------- 80
\begin{proof}
To show $\vecipower{\Erralpha}{T}\veci{r}_j = O(1)$ or smaller, note that $z-1+\vecipower{b}{T}A^{-1}\vece=O(1)$ due to Assumption~(\ref{eq:AssumptionsLS}e). Second: to leading order in $\dt$, the $\veci{r}_j$ are the right eigenvectors of the rank-1 matrix $\vece\,\vecipower{b}{T}$, so that $\vecipower{b}{T}A^{-1}\veci{r}_j$ (and hence $\vecipower{\Erralpha}{T}\veci{r}_j$) is $O(1)$ or smaller. For $\vecipower{\ell}{T}_1\veci{h}$: Theorem~\ref{thm:Meigenvalues}(2) implies $\veci{\ell}_1$ has the form $\veci{\ell}_1 = \veci{b}+\sum_{j=1}^{p-1}\dt\,^j \veci{\beta}_j+ O(\dt\,^p)$, with a similar expansion for the eigenvalue. Substituting into $\vecipower{\ell}{T}_1 M = \lambda_1\vecipower{\ell}{T}_1$ and collecting powers of $\dt$ reveals that $\vecipower{\beta}{T}_j$, $1 \leq j \leq p-1$, is a linear combination of the vectors $\vecipower{b}{T}A^m$, $0 \leq m \leq j$. Now:
\begin{equation}\label{eq:ldoth}
	\vecipower{\ell}{T}_1\veci{h} = \frac{1}{z}\vecipower{\ell}{T}_1
	\sum_{k=2}^{p-1}\frac{(\imath \omega)^k \dt\,^k}{(k-1)!}\sov{k}
	\; U^*(x) + \frac{\ltel}{z(z-1)}\vecipower{\ell}{T}_1\vece
	+ O(\dt\,^p)\/.
\end{equation}
However, $\vecipower{\ell}{T}_1\sov{k} = \vecipower{b}{T}\sov{k} + \sum_{j=1}^{p-1}\dt\,^{j}\vecipower{\beta}{T}_j\sov{k} + O(\dt\,^p)$, where each $\vecipower{\beta}{T}_j\sov{k}$ term is a linear combination of $\vecipower{b}{T}A^m\sov{k}\/$, $0 \leq m \leq j$. Since Proposition~\ref{prop:StageOrderVectorOrthog} implies that $\vecipower{b}{T}  \sov{k} = 0$ for $k\leq p-1$ and $\vecipower{\beta}{T}_j\sov{k} = 0$ for $j\leq p-1-k$, it follows that
\begin{equation}\label{eq:ldottau}
  \vecipower{\ell}{T}_1\sov{k} = \vecipower{b}{T}\sov{k}+
    \sum_{j=p-k}^{p-1}\dt\,^{j}\vecipower{\beta}{T}_j\sov{k} + O(\dt\,^p)
    = O(\dt\,^{p-k})\/.
\end{equation}
Combining (\ref{eq:ldoth}--\ref{eq:ldottau}) with $\ltel = O(\dt\,^{p+1})$ and $\frac{\ltel}{z-1}=O(\dt\,^p)$ yields $\vecipower{\ell}{T}_1\veci{h} = O(\dt\,^p)$.
\end{proof}
% --------------------------------------------------------------------------- 80

We conclude the section by proving that the eigenvalues of $M$ lie in the RK stability region, which guarantees solutions to \eqref{eq:IRK_mat_eqn} are stable (small right-hand sides yield small solutions).
\begin{theorem}[Eigenvalues of $M$ are not in $W_e(\theta_2)$] \label{thm:MEigLocation}
Fix $|z| = 1, z \neq 1, \dt > 0$, and let $\lambda \neq 0$ be an eigenvalue of $M$; set $\zeta = \dt/\lambda\/$.  Then (at least) one of the two statements applies:
	\begin{enumerate}
		\item $1/\zeta$ is an eigenvalue of $A$; or
		\item $R(\zeta) = z$, where $R$ is the scheme's stability function.
	\end{enumerate}
    In particular:
	\begin{enumerate}
		\item[(3)] $\lambda\/$ is not in the interior of the wedge $W_e(\theta_2)\/$, introduced in (\ref{eq:AssumptionsLS}b).
		\item[(4)] If the scheme is A-stable, then $\text{Re}(\lambda) \geq 0$.
	\end{enumerate}
	If, conversely, $1/\zeta$ is an eigenvalue of $A$, with eigenvector in $\vec{b}^{\perp}$, then $\lambda = \dt/\zeta$ is an eigenvalue of $M$.
\end{theorem}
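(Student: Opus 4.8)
The plan is to reduce the whole statement to a single eigenvector computation that produces the dichotomy (1)--(2), and then to read off (3), (4), and the converse as consequences of the stability hypotheses and the elementary geometry of the map $\lambda\mapsto\zeta$.

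\textbf{The algebraic engine.} First I would take a right eigenvector $\veci{r}\neq\vec{0}$ of $M$ with $M\veci{r}=\lambda\veci{r}$; by Theorem~\ref{thm:Meigenvalues}(3) we have $\lambda\neq0$, so $\zeta=\dt/\lambda$ is finite and nonzero. Using $M=\dt\,A+\frac{\dt}{z-1}\vece\,\vecipower{b}{T}$ and dividing the eigenvalue equation by $\lambda$, it becomes $(I-\zeta A)\veci{r}=\frac{\zeta}{z-1}(\vecipower{b}{T}\veci{r})\,\vece$. Now split into two cases. If $I-\zeta A$ is singular then $1/\zeta$ is an eigenvalue of $A$, which is alternative (1). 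Otherwise $I-\zeta A$ is invertible and $\veci{r}=\frac{\zeta}{z-1}(\vecipower{b}{T}\veci{r})(I-\zeta A)^{-1}\vece$; since $\veci{r}\neq\vec{0}$ this forces $\vecipower{b}{T}\veci{r}\neq0$, so applying $\vecipower{b}{T}$ to both sides and cancelling the scalar $\vecipower{b}{T}\veci{r}$ gives $z-1=\zeta\,\vecipower{b}{T}(I-\zeta A)^{-1}\vece$, i.e.\ $R(\zeta)=z$, which is alternative (2). This establishes the dichotomy.

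\textbf{Deducing (3) and (4).} The geometric observation is that $\zeta=\dt/\lambda=\dt\,\bar\lambda/|\lambda|^2$ is a positive multiple of $\bar\lambda$, so $\operatorname{Re}\zeta$ has the same sign as $\operatorname{Re}\lambda$, and—because the wedge $W_e(\theta)$ is symmetric under conjugation—$\lambda\in W_e(\theta)$ iff $\zeta\in W_e(\theta)$. For (3), suppose $\lambda\in\operatorname{int}W_e(\theta_2)$, hence $\zeta\in\operatorname{int}W_e(\theta_2)$, which lies in the open left half-plane and is contained in the stability set by assumption~(\ref{eq:AssumptionsLS}b). In case (1), $1/\zeta$ is an eigenvalue of $A$, so assumption~(\ref{eq:AssumptionsLS}c) gives $\operatorname{Re}(1/\zeta)\geq0$, hence $\operatorname{Re}\zeta\geq0$, contradicting $\zeta$ in the open left half-plane. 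In case (2), I would invoke the maximum modulus principle: assumption~(\ref{eq:AssumptionsLS}c) places the poles of the rational function $R$ (the reciprocals of the eigenvalues of $A$) in the closed right half-plane, so $R$ is analytic and non-constant (by consistency, $R(\zeta)=1+\zeta+\cdots$) on the open wedge, where $|R|\leq1$; hence $|R(\zeta)|<1$ strictly, contradicting $|R(\zeta)|=|z|=1$. The argument for (4) is identical, with the open wedge replaced by the open left half-plane and (\ref{eq:AssumptionsLS}b) replaced by A-stability.

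\textbf{The converse and the main obstacle.} The converse is a one-line check: if $A\veci{v}=(1/\zeta)\veci{v}$ with $\vecipower{b}{T}\veci{v}=0$, then the rank-one term annihilates $\veci{v}$ and $M\veci{v}=\dt\,A\veci{v}=(\dt/\zeta)\veci{v}$, so $\lambda=\dt/\zeta$ is an eigenvalue of $M$. I expect the only genuine obstacle to be the case-(2) arguments for (3) and (4): upgrading the soft bound $|R|\leq1$ coming from stability to the strict inequality $|R|<1$ needed to contradict $|z|=1$. This is exactly where the maximum modulus principle enters, and it must be paired with the pole-location fact from~(\ref{eq:AssumptionsLS}c) to guarantee analyticity of $R$ on the relevant open region. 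Everything else is routine linear algebra together with the geometry of the reciprocal map.
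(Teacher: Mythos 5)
Your proof is correct and follows essentially the same route as the paper's: the identical eigenvector computation yielding the dichotomy via $(I-\zeta A)\veci{r}=\frac{\zeta}{z-1}(\vecipower{b}{T}\veci{r})\,\vece$, the same appeal to assumptions (\ref{eq:AssumptionsLS}b,c) for items (3)--(4), and the same one-line verification of the converse. The only difference is one of explicitness: where the paper tersely asserts that in case (2) the point $\zeta$ must lie on the boundary of the stability region because $|z|=1$, you supply the underlying justification---analyticity of $R$ on the relevant open set (its poles are reciprocals of eigenvalues of $A$, hence in the closed right half-plane by (\ref{eq:AssumptionsLS}c,e)) together with the maximum modulus principle to upgrade $|R|\leq 1$ to the strict bound $|R|<1$ at interior points---which is precisely the fact the paper's one-sentence assertion tacitly relies on.
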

\begin{proof}	
We first justify items (1--2). Let $\vec{w} \neq 0$ be such that $M\vec{w} = \lambda\vec{w}$. Now if $\zeta^{-1}$ is an eigenvalue of A, then (1) holds and we are done. Assume, instead that $\zeta^{-1}$ is not an eigenvalue of A, and hence $(I-\zeta A)$ is invertible. Then $M\vec{w} = \lambda \vec{w}$ is equal to
\begin{align*}
\frac{\dt}{z-1} \vece\,\vecipower{b}{T}\vec{w} + \dt A\vec{w} = \lambda\vec{w},
\quad\quad \textrm{and thus} \quad\quad
\vec{w} = \frac{\zeta}{z-1}(\vecipower{b}{T}\vec{w}) (I-\zeta A)^{-1}\vece,
\end{align*}
where we have used $\lambda = \dt/\zeta$. The right equation shows $\vecipower{b}{T}\vec{w} \neq 0$ because $\vec{w} \neq 0$. Multiplying the right equation through by $((z-1)/(\vec{b}^T \vec{w}))\vec{b}^T$ and rearranging leads to the following identity:
\begin{equation*}
R(\zeta) = 1+\zeta\vecipower{b}{T}(I-\zeta A)^{-1}\vece = z.
\end{equation*}
Items (3) and (4) follow from (\ref{eq:AssumptionsLS}c) when (1) applies. On the other hand, when (2) applies, $\zeta\/$ must be on the boundary of the stability region, since $|z| = 1$. Then (3) follows from (\ref{eq:AssumptionsLS}b), and (4) from the definition of A-stability. Finally, the converse statement follows from the definition of $M$ in \eqref{eq:MatrixM}.
\end{proof}
% --------------------------------------------------------------------------- 80
\begin{corollary}\label{cor:invertibility}
The operator $I-M\diffop\/$ has an $L^\infty$-bounded inverse. Furthermore, $(I-M\diffop\/)^{-1}$ has an $L^{\infty}$ bound which is uniform for $\dt$ small enough.
\end{corollary}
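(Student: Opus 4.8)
The plan is to establish invertibility of $I - M\diffop$ by diagonalizing $M$ and reducing the operator equation to $s$ decoupled scalar resolvent problems for $\diffop$, each of which is controlled by Assumption~(\ref{eq:AssumptionsLS}a). First I would use Assumption~(\ref{eq:AssumptionsLS}d) (applied with $z$ near $1$, so $|z-1|$ small) to write $M = T\,\Lambda\,T^{-1}$, where $\Lambda = \mathrm{diag}(\lambda_1,\dots,\lambda_s)$ and the eigenvector matrix $T$ has condition number bounded uniformly by $c_d$ for $\dt$ small. Introducing new unknowns $\vec{w} = T^{-1}\veci{\errs}$, the equation $(I - M\diffop)\veci{\errs} = \vec{r}$ becomes $T(I - \Lambda\diffop)T^{-1}\veci{\errs} = \vec{r}$, i.e.\ the decoupled system $(I - \lambda_i\diffop)w_i = (T^{-1}\vec{r})_i$ for $i=1,\dots,s$, since $\diffop$ acts componentwise and commutes with the constant matrix $T^{-1}$.

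Next I would invert each scalar equation. By Theorem~\ref{thm:MEigLocation}(3), each eigenvalue $\lambda_i$ lies outside the interior of $W_e(\theta_2)$, and since $\theta_2 > \theta_1$ by (\ref{eq:AssumptionsLS}b), a short geometric argument shows $\lambda_i \notin W_e(\theta_1)$ as well (possibly after shrinking $\dt$ to keep the $O(1)$ eigenvalue $\lambda_1 \approx (\imath\omega)^{-1}$, which is purely imaginary to leading order, safely out of the smaller wedge). Hence Assumption~(\ref{eq:AssumptionsLS}a) applies with $\lambda = \lambda_i$, giving $\| (I - \lambda_i\diffop)^{-1} \|_{L^\infty} \leq K$ with $K$ independent of $i$ and of $\dt$. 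Assembling these, $(I - M\diffop)^{-1} = T\,\mathrm{diag}\big((I-\lambda_i\diffop)^{-1}\big)\,T^{-1}$, and taking $L^\infty$ norms yields
\begin{equation*}
\| (I - M\diffop)^{-1} \|_{L^\infty} \leq \|T\|\,\|T^{-1}\|\,\max_i \|(I-\lambda_i\diffop)^{-1}\|_{L^\infty} \leq c_d\,K,
\end{equation*}
which is the desired uniform bound.

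The main obstacle I anticipate is ensuring that \emph{every} eigenvalue $\lambda_i$ genuinely lands outside $W_e(\theta_1)$ uniformly in $\dt$, not merely outside the open wedge $W_e(\theta_2)$. Theorem~\ref{thm:MEigLocation} places the $\lambda_i$ outside the interior of $W_e(\theta_2)$, which for the small eigenvalues $\lambda_2,\dots,\lambda_s = O(\dt)$ requires care near the origin: as $\dt \to 0$ these eigenvalues shrink toward $0$, which lies on the boundary of the wedge, so I must verify that their \emph{arguments} (governed to leading order by the eigenvalues $\mu_0$ of $B = QA|_{\vec{b}^\perp}$ via Theorem~\ref{thm:Meigenvalues}(2)) stay bounded away from the critical angle $\theta_1$. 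The buffer $\theta_2 > \theta_1$ is precisely what provides this margin, and the uniformity of the conditioning bound in (\ref{eq:AssumptionsLS}d) is what prevents $\|T\|\,\|T^{-1}\|$ from blowing up as the small eigenvalues coalesce at $0$. Once these two uniformities are in place, the rest is a routine assembly of the diagonalized resolvent bound.
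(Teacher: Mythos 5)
Your proof is correct and is essentially the paper's own argument: diagonalize $M$ using Assumption~(\ref{eq:AssumptionsLS}d), invoke Theorem~\ref{thm:MEigLocation} together with Assumption~(\ref{eq:AssumptionsLS}a) to bound each scalar resolvent $(I-\lambda_i\diffop)^{-1}$ in $L^\infty$ by the single constant $K$, and assemble the bound through the uniformly bounded condition number of the eigenvector matrix $T$. The obstacle you anticipate is already settled by Theorem~\ref{thm:MEigLocation}(3) as stated: since $W_e(\theta_1)\subset W_e(\theta_2)$ and the wedges are open, exclusion from $W_e(\theta_2)$ immediately gives exclusion from $W_e(\theta_1)$ for every eigenvalue at every $\dt$, with no separate argument about the limiting angles of the small eigenvalues needed.
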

% --------------------------------------------------------------------------- 80
\begin{proof}
By Assumption~(\ref{eq:AssumptionsLS}d), $M$ is diagonalizable. Let $M = T D T^{-1}$, where the columns of $T$ are eigenvectors of $M$, and $D$ is the diagonal matrix with the corresponding eigenvalues. Then $I-M\diffop = T (I-D\diffop) T^{-1}\/$.
Theorem~\ref{thm:MEigLocation}, and Assumption~(\ref{eq:AssumptionsLS}a) guarantee that $(1-\lambda\diffop)^{-1}$ is bounded in $L^{\infty}$, independent of $\dt$, simultaneously for all eigenvalues of $M$. Assumption~(\ref{eq:AssumptionsLS}d) implies that both $\|T(\dt)\|\/$ and $\|T^{-1}(\dt)\|$ are bounded, and remain bounded (uniformly) as $\dt \rightarrow 0$. Hence, $(I - M \diffop)^{-1}$ is bounded, and is uniform as $\dt \rightarrow 0$.
\end{proof}
% --------------------------------------------------------------------------- 80

Corollary~\ref{cor:invertibility} is used in \Srm\ref{ssec:eigenmodeanalysis} and \Srm\ref{ssec:WeakStageOrder_OrderResults} to estimate the magnitude of the errors (i.e., the amplitude of the numerical BLs) incurred by the scheme.

% --------------------------------------------------------------------------- 80
\subsection{Qualitative Behavior of the Global Error}
\label{ssec:eigenmodeanalysis}
% --------------------------------------------------------------------------- 80
We now use the spectral decomposition of the \matrixMtilde~$M$, derived in \Srm\ref{ssec:EigenValuesofM}, to analyze the behavior of equation \eqref{eq:IRK_mat_eqn} for $|z| = 1$, thus characterizing the spatial approximation error for numerical solutions that are periodic in time.

Let $\ef_i = \vecipower{\ell}{T}_i\veci{\errs}$ be the component of the error $\veci{\errs}$ in the eigenmode corresponding to the eigenvalue $\lambda_i$. Then, left-multiplying equation \eqref{eq:IRK_mat_eqn} by the left eigenvectors of $M$ (note that $\vecipower{\ell}{T}_i\diffop = \diffop\,\vecipower{\ell}{T}_i$, because $\diffop$ is linear), we obtain the following set of decoupled BVPs:
\begin{equation}\label{eq:eqn_eigenmodes}
 \ef_i - \lambda_i\diffop \ef_i = \vecipower{\ell}{T}_i\,\veci{h}
 \quad\mbox{with b.c.}\;\,\ef_i = 0\/,\quad\mbox{for}\quad 1\leq i\leq s\/.
\end{equation}
When $\lambda_i\/$ is small, \eqref{eq:eqn_eigenmodes} is a singular perturbation problem that can be analyzed with standard methods \cite{BenderOrszag1978}. We can thus conclude:
\begin{enumerate}[(I)]
\item \label{itm:IRK_ev_zero} \fbox{$\zeromode$}. The function $\zeromode = O(\dt\,^{q+1})$ or smaller, is comprised of $\ltel$ and $\veci{\lte}$. It has no singular behavior: spatial derivatives of $\zeromode$ are (generally) of the same order as $\zeromode$.  For stiffly accurate schemes, $\vecipower{b}{T}A^{-1} = (0,\dots,0,1)$ and $\lte_s = \ltel$ imply that $\zeromode \equiv 0$.
\item \label{itm:IRK_ev_1} \fbox{$\ef_1$; $\lambda_1 = O(1)\neq 0$.} As shown in \Srm\ref{ssec:EigenValuesofM}, the matrix $M$ has one $O(1)$ eigenvalue which is close to $(\imath \omega)^{-1}$. By Corollary~\ref{cor:invertibility}, the magnitude of the eigenmode $\ef_1$ is determined by $\vecipower{\ell}{T}_1\veci{h}$ in the BVP, which is $O(\dt\,^p)$ by Proposition~\ref{prop:1stLeftEigenVector}. Thus $\ef_1 = O(\dt\,^p)$. Further, $\ef_1$ has no singular behavior: the spatial derivatives of $\ef_1$ are of the same order as $\ef_1$ (provided $U^*(x)$ is smooth enough).
\item \label{itm:IRK_ev_small} \fbox{$\ef_j$; $\lambda_j$ is small but nonzero $(2 \leq j \leq s)$.} Then \eqref{eq:eqn_eigenmodes} is a singularly perturbed BVP, with $\lambda$ the small parameter. The solution $\ef$ generally has singular behavior, often in the form of boundary layers (BLs) (see Lemma~\ref{lem:HFO} regarding other possible effects). From Corollary~\ref{cor:invertibility}, the BL amplitude in $\ef$ is determined by the right hand side of the BVP,  $\vecipower{\ell}{T}\veci{h}$ --- thus, in general, $\ef_j = O(\dt\,^{q+1})$ ($2\leq j\leq s$) or smaller.  Spatial derivatives of $\ef_j$ will lose orders of accuracy, where the exact loss of accuracy depends on $\diffop$. For example, the heat equation will introduce BLs in $\ef_i$ that scale as $x/\sqrt{\dt}$, hence each derivative introduces a 1/2 order loss. The occurrence of singular behavior and BLs in the solutions of $\ef$ are unavoidable for generic time-dependent b.c.\ and forcing.
\end{enumerate}
Expanding $\veci{\errs}(x)$ in the eigenbasis of $M$:
\begin{equation*}
\veci{\errs}(x) =
\sum_{i = 1}^{s} \veci{r}_i \vecipower{\ell}{T}_i \veci{\errs}(x) =
\sum_{i = 1}^{s} \veci{r}_i \ef_i(x), \quad
\text{using that } \ef_i(x) = \vecipower{\ell}{T}_i \veci{\errs}(x)\/,
\end{equation*}
the RK error \eqref{eq:IRK_mat_eqn2} can then be expressed as:
\begin{equation}\label{eq:EigenModesLinComb}
	\errl(x) = \zeromode(x) \/ + \sum_{i=1}^s \vecipower{\Erralpha}{T}\veci{r}_i \, \ef_i(x) = \zeromode(x) +
	\sum_{i = 1}^{s}
	\frac{z\,\vecipower{b}{T}A^{-1}\veci{r}_i}{z-1+\vecipower{b}{T}A^{-1}\vece}
	\,\ef_i(x).
\end{equation}
Equation \eqref{eq:EigenModesLinComb} shows that the global error, $\errl(x)$, is composed of errors that are of the scheme's order ($\ef_1$), or of the scheme's stage order ($\zeromode$; $\ef_2$ through $\ef_{s}$); and that the error may have singular behavior.  Note that if Assumption~(\ref{eq:AssumptionsLS}e) is violated in that $\vecipower{b}{T}A^{-1}\vece = 0$, then the coefficients $\vecipower{\Erralpha}{T}\veci{r}_i$ of $\ef_i$ in \eqref{eq:EigenModesLinComb} scale like $O(\dt\,^{-1})$, resulting in an additional loss of convergence order. We conclude this section with several remarks.
% --------------------------------------------------------------------------- 80
\begin{remark}[Boundary mismatch for non-stiffly accurate schemes]
\label{rem:BoundaryMismatch}
Equation \eqref{eq:EigenModesLinComb} shows that the error $\errl$ evaluated at the domain boundary is: $\errl = \zeromode$. Stiffly accurate schemes guarantee that conventional b.c.\ (i.e., $g_i^{n+1} = g(t_n + c_i\,\dt)\/$) yield $\zeromode  = \errl = 0$ and hence exactly enforce the b.c.~$u^{n+1} = g^{n+1}$.  For non-stiffly accurate schemes, $\errl$ is in general non-vanishing yielding $u^{n+1} = g^{n+1} + O(\dt\,^{q+1})$.
\TheoremEnd
\end{remark}
% --------------------------------------------------------------------------- 80

% --------------------------------------------------------------------------- 80
\begin{remark}[Slowly decaying modes]
The calculation in this section is restricted to periodic in-time modes, which (see Appendix~\ref{app:periodicsol}) is sufficient to capture the order reduction phenomena, \emph{provided that the normal modes for both the equation and the scheme decay in time, and do so sufficiently fast as their space frequency grows.} Here we describe two situations where this condition is violated:

Schemes with growth factor such that $|R(\zeta)|\to 1$ as $\zeta\to -\infty$. Then, the numerical solution may contain transient artifacts. For example, in the heat equation those artifacts resemble BLs, but they thin out in width slowly over time (and thus can compromise the observed order for the solution and its derivatives). The artifacts can be triggered by BLs produced in the initial step, via the mechanism outlined in \Srm\ref{ssec:orderloss_lte}. The introduced high frequency modes then die arbitrarily slowly---slower the higher the frequency, which is why the artifacts tend to become narrower as time grows. An important RK scheme exhibiting this behavior is the implicit mid-point rule, defined by the Butcher tableau $A=\vecc=[1/2]$ with $\veci{b}=[1]$. Because it has only one stage, the matrix $M$ has no small eigenvalues, and the scheme has no time-periodic numerical modes with BLs. The implicit mid-point rule is the simplest case of a Gauss method, which achieve order $2s$ with $s$ stages. These methods have $R(\zeta)\rightarrow(-1)^s$ as $\zeta\rightarrow-\infty$, thus they are all examples for the issue described here. In addition, for $s\geq 2$, they also exhibit OR in the time-periodic sense, due to existence of small eigenvalues.

A second example of ``slowly decaying modes'' occurs when the operator $\diffop$ in \eqref{eq:IBVP} is purely dispersive. In this case the normal modes for the equation itself are time-periodic, with no decay. An accurate numerical scheme will approximate this behavior, with normal modes that decay very slowly---at least as long as their frequencies are not too high. Just as for the schemes where $|R(\zeta)|\to 1$ as $\zeta\to -\infty$, this can lead to long-lived transients in the numerical solution (also triggered by BL effects) which compromise the observed order for the solution. An example of this situation in provided in \Srm\ref{ssec:NumExamples_Schroedinger}.
\TheoremEnd
\end{remark}
% --------------------------------------------------------------------------- 80

% --------------------------------------------------------------------------- 80
\begin{remark}[Jordan blocks] \label{rem:JordanBlock}
The eigen-equation \eqref{eq:eqn_eigenmodes}, error expansion \eqref{eq:EigenModesLinComb}, and general discussion in this section, are formulated for matrices $M$ that are diagonalizable. However, they can be modified to include the general case in which $M$ has Jordan blocks. To see this, assume that $\vecipower{\ell}{T}_{i,j+1} M = \lambda_i\vecipower{\ell}{T}_{i,j+1} + \vecipower{\ell}{T}_{i,j}$ for $1\leq j\leq J_i-1$, where $\veci{\ell}_{i,1}$ is the eigenvector associated with $\lambda_i$ and $J_i$ denotes the size of the Jordan block corresponding to $\lambda_i$. Then equation \eqref{eq:eqn_eigenmodes} is modified to
\begin{equation}
\label{eq:eqn_jordan_block}
\ef_{i,j+1} - \lambda_i\,\diffop\,\ef_{i,j+1} =
\vecipower{\ell}{T}_{i,j+1}\,\veci{h} +
\diffop\,\ef_{i,j}\/, \;\;\text{with b.c.}\;\; \ef_{i,j+1} = 0\/,
\end{equation}
where $\ef_{i,j} = \vecipower{\ell}{T}_{i,j}\veci{\errs}$. Note that the occurrence of $\diffop$ in the right hand side of \eqref{eq:eqn_jordan_block} provides a BL feedback mechanism through the derivatives of the BL in the prior generalized eigenfunction. This can potentially trigger worse OR effects than in the diagonal case.
\TheoremEnd
\end{remark}
% --------------------------------------------------------------------------- 80

% --------------------------------------------------------------------------- 80
\subsection{Asymptotic Analysis of the Boundary Layers}
\label{ssec:orderresults}
% --------------------------------------------------------------------------- 80
In this subsection, we conduct an asymptotic analysis ($\dt \ll 1$) of the singular functions $\ef_i(x)$ and the global RK numerical error \eqref{eq:EigenModesLinComb}, i.e., $\errl(x)$. The modes $\ef_i(x)\/$ solve the BVP
\begin{equation}\label{eq:singulareigenmodes}
  \ef_i - \lambda_i \diffop \, \ef_i = \ampk{i} \/ U^*(x) \/,
  \quad\text{with b.c.}\quad \ef_i = 0\/,
\end{equation}
where we have introduced $\ampk{i}$ (independent of $x$) by writing $\vecipower{\ell}{T}_i \veci{h}\/ = \ampk{i}\, U^*(x)$ using $\veci{h}$ in \eqref{eq:ErrorFunctions}. Further, by Proposition~\ref{prop:1stLeftEigenVector}: $\ampk{1} = O(\dt^{p})$, while $\ampk{i} = O(\dt^{q+1})$ or smaller for $2\leq i \leq s$.

As mentioned in \Srm\ref{ssec:eigenmodeanalysis}, the solution to \eqref{eq:singulareigenmodes}, for $i = 1$, is non-singular since $\lambda_1 = O(1)$ (and as shown below, will not contribute to order reduction). For $2\leq i \leq s$, $\lambda_j = O(\dt)$, and as we show here, the solution to \eqref{eq:singulareigenmodes} can be described using standard matched asymptotic expansions \cite{BenderOrszag1978, MHHolmes1995, JKHunter2004, kevorkian1996multiple}. Below, we work out the analysis for $\diffop$ a second order operator in one space dimension. This restriction allows us to showcase how the spatial structures that arise due to OR can be constructed in a concrete fashion. Note that in higher dimensions, such concrete constructions are not as easily done (for example, when corner layers arise); however, the general nature of OR, namely its manifestations via singularly perturbed problems and its resulting asymptotic structures, persists in any dimension.

When $U^*(x)$ is smooth, $\ef_i\/$ has two BLs (one at each boundary) of thickness $O(\sqrt{\dt})$.  Away from the BLs, the solution is described by an ``outer'' expansion that will not contribute to OR. Inside the BLs, the solution is described by the ``inner'' expansion, and together the inner and outer expansions generate a ``composite'' expansion valid on the whole domain.

% --------------------------------------------------------------------------- 80
\subsubsection{Outer expansion, $2\leq i\leq s$.}
% --------------------------------------------------------------------------- 80
Valid away from the boundaries (i.e., $\sqrt{\dt} \ll x$ and $x \ll 1-\sqrt{\dt}$) is a ``regular'' expansion based on Taylor expanding the solution $\ef_i$ in powers of the small parameter $\lambda_i$ up to order $m$. Namely, $\ef_i \sim \rsol_i$ where $\rsol_i$ is the truncated (Neumann) series for $(I - \lambda_i \diffop)^{-1}\ampk{i}U^*$:
\begin{align}\label{eq:reg_sol}
		\rsol_i(x)  = \ampk{i}\left(
			U^* + \lambda_i \diffop U^* + \lambda_i^2 \diffop^2 U^* + \ldots + \lambda_i^{\mpow} \diffop^{\mpow} U^*
		\right), \quad\quad \textrm{for } 2 \leq i \leq s.
\end{align}
Equation \eqref{eq:reg_sol} is an $m-$th order expansion in powers of $\lambda_i = \mu_i \dt + o(\dt)$; how large one can take $m\/$ depends on how many derivatives $U^*\/$ has. Note that the focus here is on $m$ fixed and $\dt\to 0$.  No statement is made about $\dt$ fixed and $m\to\infty$ in \eqref{eq:reg_sol}.

In the following, we discuss, first in a special case and then in the general setting, the situation where $U^*(x)$ is smooth and OR occurs due to BLs in $\ef_i\/$, as well as one situation where $U^*(x)$ is not smooth and generates an internal \emph{interface} layer in $\ef_i\/$ that leads to OR inside the domain.  We will make use of the following:
\begin{enumerate}
	\item[a.] Rescaled spatial variables: $X := \frac{x}{\sqrt{\dt}}$, $Y := \frac{1-x}{\sqrt{\dt}}$ and $Z := \frac{x-1/2}{\sqrt{\dt}}$.
	\item[b.] Exponential function: $S(x) := e^{-x}$.
	\item[c.] Eigenvalues $\lambda_i=\mu_i\/\dt + o(\dt)\/$, $2 \le i \leq s\/$: From Theorem~\ref{thm:Meigenvalues}, $\mu_i \neq 0\/$. Theorem~\ref{thm:MEigLocation}(3) implies $\mu_i \notin W(\theta_2)$, so that $\mu_i$ is not a negative real number. Hence,\footnote{Using the principle branch of the square root.} $\rm{Re}(\sqrt{\mu_i}) > 0$ so $S(x/\sqrt{\lambda_i}) \approx e^{-x/\sqrt{\dt \mu_i}}$ is exponentially (in $x$) and rapidly decaying (in $\dt$).
\end{enumerate}

% --------------------------------------------------------------------------- 80
\subsubsection{Composite solution when $\diffop = \partial_x^2$, $U^*(x)$ is smooth.}
% --------------------------------------------------------------------------- 80
The composite solution has the form $\ef_i(x) \sim \rsol_i(x) + \EF_{L, i}(X) + \EF_{R,i}(Y)$, with $\EF_{L,i}(x)$ localized near $x = 0$. It arises from constructing an inner solution consisting of a BL function $\EF_{L,i}(X)$ that connects the b.c.\ $\ef_i(0) = 0$ to the outer solution $\ef_i(x) \sim \rsol_i(x)$ when $\sqrt{\dt} \ll x \ll 1 - \sqrt{\dt}$.  After rescaling space, the left BL (the one near $x=1$ is analogous) function $\EF_{L,i}(X)$ solves the ODE $\EF_{L,i} - \mu_i  \EF_{L,i}'' = 0$, with b.c.\ $\EF_{L,i}(0) = -\rsol_i(0)$ and $\EF_{L,i}(+\infty) = 0$. The general solution of this ODE is a superposition of the exponentials $S(\pm X/\sqrt{\mu_i})$, which after matching b.c.\ and using property c. (i.e., only the $+X$ exponential contributes) yields $\EF_{L,i}(X) = -\rsol_i(0)\/S(X/\sqrt{\mu_i})$. By a similar argument, the right BL has the form $\EF_{R,i}(Y) =  -\rsol_i(1)\/S(Y/\sqrt{\mu_i})$, and together the $m$-th order composite expansion valid in the whole interval is:
\begin{equation}\label{eqn:compositeExp1}
\ef_i \sim \rsol_i(x) -\rsol_i(0)\,e^{-\frac{X}{\sqrt{\mu_i}}}
-\rsol_i(1)\,e^{-\frac{Y}{\sqrt{\mu_i}}}\/, \quad\quad \textrm{for } 2 \leq i \leq s.
\end{equation}

% --------------------------------------------------------------------------- 80
\subsubsection{Composite solution when $\diffop = \alpha_2(x) \partial_x^2 + \alpha_1(x) \partial_x + \alpha_0(x)$, $U^*(x)$ is smooth.}
% --------------------------------------------------------------------------- 80
Here $\alpha_2(x) > 0$ is positive. The asymptotics of the variable coefficient $\diffop$ are only a minor modification of the case $\diffop = \partial_x^2$.  After rescaling space near the left BL:
\begin{equation*}
\lambda_i \diffop = \alpha_2(X \dt) \frac{\lambda_i}{\dt} \partial_{X}^2 + \alpha_1(X \dt) \frac{\lambda_i}{\sqrt{\dt}} \partial_X + \alpha_0(X \dt) = \mu_i \alpha_2(0) \, \partial_{X}^2 + o(\dt),
\end{equation*}
so that at leading order in $\dt$, $\EF_{L,i}(X)$ solves $\EF_{L,i} - \mu_i \alpha_2(0) \EF_{L,i}'' = 0$, with b.c.\ $\EF_{L,i}(0) = -\rsol_i(0)$ and $\EF_{L,i}(+\infty) = 0$. The solution \eqref{eqn:compositeExp1} is modified to:
\begin{equation}\label{eqn:compositeExp2}
\ef_i \sim \rsol_i(x) -\rsol_i(0)\,e^{-\frac{X}{\sqrt{\mu_i \, \alpha_2(0)}}}
-\rsol_i(1)\,e^{-\frac{Y}{\sqrt{\mu_i \,\alpha_2(1)}}}\/, \quad\quad \textrm{for } 2 \leq i \leq s.
\end{equation}

% --------------------------------------------------------------------------- 80
\subsubsection{Composite solution when $\diffop = \alpha_2(x) \partial_x^2 + \alpha_1(x) \partial_x + \alpha_0(x)$, $U^*(x)$ is piecewise smooth.}
\label{subsubsec:non_smooth_Ustar}
% --------------------------------------------------------------------------- 80
Here $\alpha_2(x) > 0$ is positive.  Assume $U^*(x)$ is $C^\infty$ on $[0, \frac{1}{2})$ and also on $(\frac{1}{2}, 1]$, and both $U^*(x)$ and all its derivatives have finite one-sided limits at $\frac{1}{2}$ (where $x=\frac{1}{2}$ is chosen without loss of generality). In addition, there is some $1 \leq \powfail < m$ where $\diffop^{\powfail} U^*$ does not exist at $x = \frac{1}{2}$.  The expression \eqref{eqn:compositeExp2} fails near $x = \frac{1}{2}$ since $\rsol_i(\frac{1}{2})$ does not exist, however \eqref{eqn:compositeExp2} still remains valid on $0 \leq x \ll \frac{1}{2} - \sqrt{\dt}$ and separately on $\frac{1}{2} + \sqrt{\dt} \ll x \leq 1$. In the vicinity of $x = \frac{1}{2}$, the function $\ef_i$ has an internal layer $\EF_{I,i}(Z)$ that connects $\ef_i(x)$ to the two sides of the outer solution $\ef_i(x) \sim \rsol_i(x)$ as $|x - \frac{1}{2}| \gg \dt$. After rescaling space, $\EF_{I,i}(Z)$ solves $\EF_{I,i} - \mu_i \, \alpha_2(\frac{1}{2}) \EF_{I,i}'' = 0$ with b.c.\ $\EF_{I,i}(\pm \infty) = 0$ and interface conditions that enforce continuity of $\ef_i$ and $\ef_i'$ across $x = \frac{1}{2}$.
Let $\Phi_{\pm} := \lim_{\tau \rightarrow 0^+} \rsol_i(\frac{1}{2} \pm \tau)$ and
$\Phi_{\pm}' := \lim_{\tau \rightarrow 0^+} \left(\frac{d\rsol_i }{dx}(\frac{1}{2} \pm \tau) \right)$ and introduce the jumps across $x = \frac{1}{2}$ as
$\left[ \Phi \right] := \Phi_+ - \Phi_-$ and
$\left[ \Phi' \right] := \Phi_{+}' - \Phi_{-}'$. We have
\begin{equation}\label{eqn:intsol}
	\EF_{I,i}(Z) =
	\left(-\frac{\mathrm{sgn}(Z)}{2} \left[ \Phi \right] +
	\frac{\sqrt{\dt \, \mu_i \, \alpha_2(\frac{1}{2})}}{2}\left[ \Phi' \right] \right) e^{-\frac{|Z|}{\sqrt{\mu_i \, \alpha_2(\frac{1}{2})}}},
\end{equation}
where $\mathrm{sgn}(Z)$ is the sign of $Z$. The composite solution \eqref{eqn:compositeExp1}, is modified to ($\EF_{L,i}$, $\EF_{R,i}$ are the same as \eqref{eqn:compositeExp1})
\begin{equation}\label{eqn:compositeExp3}
\ef_i \sim \left\{
	\begin{array}{lc}
     	\rsol_i(x) + \EF_{L,i}(X) + \EF_{R, i}(Y) + \EF_{I,i}(Z),
		& \textrm{for } x \neq \frac{1}{2}, \\
		\frac{1}{2}\left(\Phi_{+} + \Phi_{-} \right)
		+ \frac{\sqrt{\dt \, \mu_i \, \alpha_2(\frac{1}{2})}}{2}\left[ \Phi' \right]
		& \textrm{for } x = \frac{1}{2}
   \end{array}
		\right.
		\quad \textrm{for} \quad
	2 \leq i \leq s.
\end{equation}

% --------------------------------------------------------------------------- 80
\subsubsection{Shape of the RK error inside a boundary layer.}
% --------------------------------------------------------------------------- 80
We turn our attention to the global error $\errl(x)$. When the functions $\ef_i$ are well approximated by Taylor expansions in powers of $\dt$ (about $\dt = 0$) for $\dt \ll 1$, the RK scheme is designed so that both the modes $\ef_i$, and coefficients $\vecipower{\Erralpha}{T}\veci{r}_i $, may be expanded via Taylor series in \eqref{eq:EigenModesLinComb}, and cancel out to order $O(\dt^p)$. The regular solution $\rsol_i$ is exactly the part of $\ef_i$ that can be expanded via Taylor series (the boundary or interface layers, i.e., $\EF_{L,i}(X)$, can not). Note that, while we present the analysis here for the special case of a singularity as in \Srm\ref{subsubsec:non_smooth_Ustar}, the formulation \eqref{eqn:compositeExp2}, and thus also \eqref{eqn:compositeExp3}, is the most general form if $U^*(x)$ smooth. Substituting the form \eqref{eqn:compositeExp3} into \eqref{eq:EigenModesLinComb}, we group the terms as follows:
\begin{equation}\label{eq:groupterms}
	\errl(x) =
	\underbrace{\left[\vecipower{\Erralpha}{T}\veci{r}_1 \, \ef_1(x) +
	\zeromode(x) +
	\sum_{i=2}^{s}\vecipower{\Erralpha}{T}\veci{r}_i \, \rsol_i(x)
	\right]}_{\textrm{Bracket } 1 = O(\dt^p)} \!+\!
	\underbrace{\left[\sum_{i=2}^{s} \vecipower{\Erralpha}{T}\veci{r}_i \,
	\big( \EF_{L,i}(X) \!+\! \EF_{R, i}(Y) \!+\! \EF_{I, i}(Z)		
	\big) \right]}_{\textrm{Bracket } 2}
\end{equation}
Note that the terms in Bracket~1 of \eqref{eq:groupterms} are individually $O(\dt^{q+1})$ (or smaller). However, all those terms can be Taylor-expanded in powers of $\dt$, and by consistency of the RK scheme, they sum together to $O(\dt^p)$ (or smaller); see Appendix~\ref{app:proof} for a formal proof.
Bracket~2 has terms $\EF_{\beta, i}$, $\beta = \{L,R,I\}$ that are potentially $O(\dt^{q+1})$, however do not have Taylor expansions in $\dt$ and generally do not cancel to high order.
Note that the magnitude of $\EF_{L,i}$, $\EF_{R,i}$ always occurs at the boundary, which is in general $\rsol_i =O(\dt^{q+1})$, unless the leading order terms in the regular expansion $\rsol_i$, i.e., $U^*$, $\diffop U^*$, vanish on the boundary. Similarly, if $U^*(x)$ has a singularity at $x = \frac{1}{2}$, the magnitude of $\EF_{I,i}$ is determined by the jumps in the regular solution $\left[ \Phi \right]$ and $\sqrt{\dt}\left[ \Phi' \right]$ at $x = \frac{1}{2}$ and is determined by the largest value of $\powfail$ for which $\diffop^{\powfail}U^*(\frac{1}{2})$ exists. The fact that a loss of spatial regularity in $U^*$ can result in order reduction has been discussed in \cite{OstermannRoche1992}. However, the results in \Srm\ref{subsubsec:non_smooth_Ustar} and \eqref{eq:groupterms} characterize the precise asymptotic shape of the error in the vicinity of a singularity in $U^*$.

We now examine \eqref{eq:groupterms} in the vicinity of the left boundary $x =0$ (the right boundary, or near a point $x$ where $U^*(x)$ is not smooth is similar).  Taylor-expanding $\vecipower{\Erralpha}{T}\veci{r}_i$ in $\dt$ and using the fact that Bracket~1 is $O(\dt^p)$, the error near the left boundary is:
\begin{equation}\label{eqn:ErrorInBL}
\errl(x) = \dt^{q+1}\, \sum_{i=2}^s
P_i(\dt) \,
S\left(\frac{x}{\sqrt{\dt \/ \mu_i \, \alpha_2(0)}}\right)
+ O(\dt^p)\/, \quad\quad \textrm{ for } 0 \leq x \ll 1,
\end{equation}
where $P_i(\dt)$ ($2 \leq i \leq s$) are polynomials of degree $p - q - 2$.
Equation \eqref{eqn:ErrorInBL} reveals the structure in the BLs, and explains why \emph{(generically) RK schemes incur order reduction in BLs.} Specifically: the functions $S(\tfrac{x}{\sqrt{\dt\/\mu_i\,\alpha_2(0)}})$ are singular in $\dt$, and are linearly independent when the $\mu_i\/$ are distinct. Hence, Taylor-based cancellations in the summation \eqref{eqn:ErrorInBL} (on which the scheme relies to achieve its order) do not occur. The convergence order in $\errl(x)$ is controlled by the coefficients $P_i(\dt)$ of the $O(1)$ functions $S(\tfrac{x}{\sqrt{\dt\/\mu_i \, \alpha_2(0)}})$.
An alternative viewpoint when the $\mu_i$ in \eqref{eqn:ErrorInBL} are distinct, is that the functions $\EF_{L,i}$ have different widths of $O(\sqrt{\dt})$, see~\Srm\ref{ssec:orderresults}. Hence they generally do not cancel through a linear combination and result in a ``composite'' BL in $\errl(x)$. Note that: if $\sqrt{\mu_i}$ is not a real number, the BL includes high frequency oscillations triggered by Im$(\sqrt{\mu_i})$. How visible these oscillations are depends on the ratio Im$(\sqrt{\mu_i})/\mbox{Re}(\sqrt{\mu_i})$. The larger this ratio, the larger the role of the oscillations.

Finally, away from the BLs ($\dt \ll x \ll 1-\dt$) or any point where $U^*$ is singular, the functions $\hsol_{\beta, i}$ for $\beta = \{L,R,I\}$ are exponentially small (so that Bracket 2 in \eqref{eq:groupterms} is exponentially small); hence $\ef_i \sim \rsol_i$ is just the regular solution and consequently $\errl(x)$ does not suffer from order reduction.

Equation \eqref{eqn:ErrorInBL} also highlights a crucial structural property of the approximation error of RK schemes for IBVPs. Aside from a few special cases (e.g., backward Euler), the singular functions
$S(\tfrac{x}{\sqrt{\dt\/\mu_i \, \alpha_2(0) }})$ can generally not be avoided. Instead, methods that overcome order reduction (cf.~\Srm\ref{sec:WeakStageOrder} and~\Srm\ref{sec:MBC}) render the singular functions in $\errl(x)$ to be of the scheme's formal order $p$, or higher. While this remedies OR in the solution $u$, the persistence of BLs (or internal layers) implies that (sufficiently high) spatial derivatives of the solution generally still incur OR. The fact that derivatives generally are less accurate follows because the BL or internal layer functions $\EF(x)$ (in unscaled variables) always satisfy the homogeneous equation $\diffop \EF(x) \propto \dt^{-1} \EF(x)$, which shows that $\diffop \EF(x)$ is one order less that $\EF(x)$ (see also, \cite{Alonso2002}).

% --------------------------------------------------------------------------- 80
\subsubsection{Beyond second order operators $\diffop$.}
% --------------------------------------------------------------------------- 80
The asymptotic analysis leading to \eqref{eq:groupterms} reveals that the BLs' error contributions amplify by $\dt^{-1/\ell}$ per spatial derivative, if $\diffop$ is an $\ell$-th order differential operator. These considerations are of particular importance for any practical problem in which gradients of the solution at/near the boundary are needed.

% --------------------------------------------------------------------------- 80
\begin{remark}[High order equations and composite BLs]\label{rem:HOEcBL}
For general $\diffop$, a version of \eqref{eqn:ErrorInBL} holds and the structure of the RK error can be obtained via asymptotic analysis; however the BLs in \eqref{eqn:ErrorInBL} are determined by the highest order derivative in $\diffop$. For example, let $\diffop = \alpha_{\ell}(x)\,\partial_x^{\ell} + \alpha_{\ell-1}(x)\,\partial_x^{\ell-1} + \ldots + \alpha_0(x)$, where $\alpha_{\ell}(x)$ is a nonvanishing function. Then the modes $\EF_{R,i}(x)$ and $\EF_{L,i}(x)$ contain a superposition of exponentials $\exp(x/(\dt^{1/\ell}\rho_{i\/j}))$, where $\rho_{i\/j}$ ($1 \leq j \leq \ell$) are the $\ell$ roots of $\rho_{i\/j}^\ell = \alpha_\ell(0)\,\mu_i$. Values of $\rho_{i\/j}$ with negative (resp.\ positive) real part correspond to an exponentially (in $x$) decaying (resp.\ growing) function, and contribute to a BL near $x = 0$ (resp.~$x = 1$). Values of $\rho_{i\/j}$ on the imaginary axis correspond to purely oscillatory functions.
\TheoremEnd
\end{remark}
% --------------------------------------------------------------------------- 80

Remark~\ref{rem:HOEcBL} highlights that in principle the roots $\rho_{i\/j}$ of the singular equation \eqref{eq:eqn_eigenmodes} could be purely imaginary, leading to modes $\ef_{i}\sim \exp(x/(\dt^{1/\ell}\rho_{i\/j}))$ that have \emph{high frequency oscillations} (HFO) extending over the whole domain. (This is the same type of behavior that arises in WKB theory~\cite{BenderOrszag1978}). For constant coefficient PDEs, however, dissipation (i.e., the spectrum of $\diffop$ is contained within the wedge in (\ref{eq:AssumptionsLS}a)) eliminates the possibility of HFO:
% --------------------------------------------------------------------------- 80
\begin{lemma}[High frequency oscillations] \label{lem:HFO}
Under the assumptions in \eqref{eq:AssumptionsLS}, HFO cannot occur for constant coefficient differential operators $\diffop\/$.
\end{lemma}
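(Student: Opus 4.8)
The plan is to translate the occurrence of HFO into an algebraic statement about the symbol of $\diffop$, and then to derive a contradiction between the dissipativity assumption (\ref{eq:AssumptionsLS}a) and the eigenvalue location established in Theorem~\ref{thm:MEigLocation}. Writing the constant-coefficient operator as $\diffop = \alpha_\ell\partial_x^\ell + \dots + \alpha_0$ with $\alpha_\ell\neq 0$, Remark~\ref{rem:HOEcBL} expresses the boundary/interface modes as superpositions of $\exp(x/(\dt^{1/\ell}\rho_{i\/j}))$ with $\rho_{i\/j}^\ell = \alpha_\ell\mu_i$, and identifies HFO with one of these exponents lying on the imaginary axis. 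I would therefore begin by recording the equivalence: HFO occurs for the mode $\ef_i$ if and only if $\exp(x/(\dt^{1/\ell}\rho_{i\/j})) = e^{\imath\xi x}$ for some real $\xi\neq 0$, i.e.\ if and only if the leading-order characteristic balance $\alpha_\ell(\imath\xi)^\ell = \lambda_i^{-1}$ (with $\lambda_i=\mu_i\dt+o(\dt)$) admits a real solution $\xi$. This is precisely where the constant-coefficient hypothesis is essential: it produces a single, $x$-independent symbol $\alpha_\ell(\imath\xi)^\ell$, whereas a variable coefficient could let a frozen-coefficient symbol reach the imaginary axis at some interior point and generate genuine WKB-type oscillations.

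The core of the argument is then a sector comparison, and I expect the main obstacle to be showing that dissipativity forces the symbol into the wedge, since (\ref{eq:AssumptionsLS}a) is stated for the bounded-domain operator with homogeneous b.c.\ while the symbol is an interior object. I would bridge this gap with a quasi-mode construction. Fix a bump $\phi\in C_c^\infty(0,1)$; the test functions $v_\xi = \phi\, e^{\imath\xi x}$ satisfy the homogeneous b.c.\ and obey $\diffop v_\xi = \diffop(\imath\xi)\,v_\xi + R_\xi$ with $\|R_\xi\|_{L^\infty} = O(|\xi|^{\ell-1})\|v_\xi\|_{L^\infty}$. Taking $\lambda = 1/\diffop(\imath\xi)$ (valid for large $\xi$, where $\diffop(\imath\xi)\approx\alpha_\ell(\imath\xi)^\ell\neq 0$) gives $\|(I-\lambda\diffop)v_\xi\|_{L^\infty} = |\lambda|\,\|R_\xi\|_{L^\infty} = O(|\xi|^{-1})\|v_\xi\|_{L^\infty}$, so $(I-\lambda\diffop)^{-1}$ has norm $\gtrsim |\xi|\to\infty$. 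By the uniform bound in (\ref{eq:AssumptionsLS}a) this is impossible unless $1/\diffop(\imath\xi)\in W_e(\theta_1)$; hence $1/\diffop(\imath\xi)\in W_e(\theta_1)$, and since $W_e(\theta_1)$ is a cone symmetric about the negative real axis it is invariant under $\lambda\mapsto\lambda^{-1}$, so $\diffop(\imath\xi)\in\overline{W_e(\theta_1)}$. Dividing by $|\xi|^\ell$ and letting $\xi\to\infty$, the leading term $\alpha_\ell(\imath\xi)^\ell$ inherits the same membership.

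With the symbol confined to the sector, the contradiction is immediate. Suppose HFO occurs; then $\alpha_\ell(\imath\xi)^\ell = \lambda_i^{-1}$ for some real $\xi$, so $\lambda_i^{-1}\in\overline{W_e(\theta_1)}$, and by the inversion-invariance of the wedge $\lambda_i\in\overline{W_e(\theta_1)}$. On the other hand $\lambda_i$ is an eigenvalue of $M$, so Theorem~\ref{thm:MEigLocation}(3) gives $\lambda_i\notin\operatorname{int}W_e(\theta_2)$; since $\theta_1<\theta_2$ we have $\overline{W_e(\theta_1)}\subset\operatorname{int}W_e(\theta_2)$, whence $\lambda_i\notin\overline{W_e(\theta_1)}$. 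These two conclusions are incompatible, so no $\rho_{i\/j}$ can lie on the imaginary axis and HFO cannot occur. Equivalently, the same bookkeeping may be carried directly at the level of $\mu_i$, using that Theorem~\ref{thm:MEigLocation}(3) places $\mu_i\notin W_e(\theta_2)$, as already noted in item~c of \S\ref{ssec:orderresults}. The only case to dispatch separately is the degenerate frequency $\xi=0$, where $\diffop(\imath\xi)=\alpha_0$; but HFO requires $\rho_{i\/j}\neq 0$ and hence $\xi\neq 0$, so this case is vacuous.
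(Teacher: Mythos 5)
Your proof is correct, and it reaches the same contradiction as the paper---a sector comparison between the dissipativity assumption (\ref{eq:AssumptionsLS}a) and the eigenvalue exclusion of Theorem~\ref{thm:MEigLocation}(3)---but the bridge between the two is genuinely different, and in fact more careful than the paper's. The paper substitutes $e^{\imath k x}$ into $\diffop u = \lambda u$ to write the eigenvalues of $\diffop$ as $\lambda = (rk)^\ell/\mu_i + \text{(lower order)}$, asserts that infinitely many arbitrarily large $k$ are ``allowed,'' and then argues geometrically that this eigenvalue curve, whose slope approaches $1/\mu_i$ with $|\mathrm{arg}(1/\mu_i)|\leq \pi-\theta_2$, must cross the ray at angle $\pi-\theta_1$, violating the containment of the spectrum in $W_e(\theta_1)$. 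This step is informal: for a general constant-coefficient operator with homogeneous b.c., Fourier exponentials need not be exact eigenfunctions, so the claim about admissible $k$ is a gloss. Your quasi-mode construction $v_\xi = \phi\, e^{\imath \xi x}$ with $\phi \in C_c^\infty(0,1)$ sidesteps the spectrum entirely: the compact support makes the b.c.\ automatic, the Leibniz remainder estimate $\|R_\xi\|_{L^\infty} = O(|\xi|^{\ell-1})$ gives $\|(I-\lambda\diffop)v_\xi\|_{L^\infty} = O(|\xi|^{-1})\|v_\xi\|_{L^\infty}$ at $\lambda = 1/\diffop(\imath\xi)$, and the uniform resolvent bound in (\ref{eq:AssumptionsLS}a) then forces $1/\diffop(\imath\xi) \in W_e(\theta_1)$ for large real $\xi$; inversion invariance of the wedge and the cone-limit argument transfer this to the leading symbol $\alpha_\ell(\imath\xi)^\ell$. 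What your route buys is rigor independent of which boundary conditions accompany $\diffop$, replacing the paper's slope-crossing picture with a clean closed-cone limit; what the paper's route buys is brevity and a direct spectral picture. Two trivially fixable imprecisions in yours: the inclusion $\overline{W_e(\theta_1)} \subset W_e(\theta_2)$ fails at the origin, but $\lambda_i \neq 0$ by Theorem~\ref{thm:Meigenvalues}(3) so the contradiction stands; and the characteristic balance $\alpha_\ell(\imath\xi)^\ell = \lambda_i^{-1}$ holds only at leading order in $\dt$, which is harmless since (as you note) the same bookkeeping runs at the level of $\mu_i$, where $\mu_i \notin W_e(\theta_2)$ follows from Theorem~\ref{thm:MEigLocation}(3) because the complement of the open cone is closed under scaling and limits.
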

% --------------------------------------------------------------------------- 80
\begin{proof}
Suppose that the constant coefficient $\diffop = \alpha\,\partial_x^{\ell} +$lower order terms, where $\ell\in\mathbb{N}$ and $\alpha \neq 0$, were to produce HFO in the numerical error. That means, in the limit as $\dt \rightarrow 0$, there is at least one mode $\ef_i \sim \exp(x/(\dt^{1/\ell} \imath r))$ that solves the ODE $(1 - \dt \mu_i \diffop) \psi_i = 0$, where $r$ is purely real. Substituting $\diffop$ and the exponential ansatz for $\ef_i$ into the ODE, yields the relationship $\alpha = (\imath r)^\ell/\mu_i$. This allows us to write the eigenvalues of $\diffop$ (found by substituting $e^{\imath k x}$ into $\diffop u = \lambda u$) as
\begin{equation} \label{eq:HFO_eigenvalues_of_L}
\lambda = (r k)^\ell/\mu_i +\text{(lower order terms in $k$)},
\end{equation}
where $k$ is real. In general only certain values of $k$ are allowed, but those (infinitely many) include arbitrarily large $k$. We now show that \eqref{eq:HFO_eigenvalues_of_L} violates the hypothesis \eqref{eq:AssumptionsLS} when $k \rightarrow \infty$. Specifically:
\begin{enumerate}[(i)]
\item By Theorem~\ref{thm:MEigLocation}(3): $|\textrm{arg}(1/\mu_i)| \leq \pi - \theta_2$ ($\theta_2$ defines the wedge in (\ref{eq:AssumptionsLS}b)).
\item Assumption~(\ref{eq:AssumptionsLS}a) asserts that the eigenvalues $\lambda$ of $\diffop$ satisfy $|\textrm{arg}(\lambda)| > \pi - \theta_1$.
\end{enumerate}
In the limit as $k \rightarrow \infty$, the eigenvalues \eqref{eq:HFO_eigenvalues_of_L} grow with a slope that approaches $1/\mu_i$ (i.e., $\mathrm{Im}(\lambda) / \mathrm{Re}(\lambda) \rightarrow \mathrm{Im}(\mu_i^{-1}) / \mathrm{Re}(\mu_i^{-1})$ as $k \rightarrow \infty$). Since $1/\mu_i$ has a slope with angle $\leq \pi - \theta_2$, the eigenvalues must cross every line with a larger slope angle, including the line with angle $\pi - \theta_1$. However, this violates item (ii).
\end{proof}
% --------------------------------------------------------------------------- 80

The assumptions in \eqref{eq:AssumptionsLS} were introduced to (in particular) guarantee that the scheme equations can be solved at each step, and avoid numerical instabilities. But they are in no way necessary (particularly, they exclude non-dissipative systems to simplify the analysis), and less constraining assumptions are possible. Thus, in the general situation we cannot rule out HFO---even though we have not observed them in actual examples. That being said, even if HFO situations are not possible, BLs that include oscillations do occur. Further, because the BL thickness scales like $\dt^{1/\ell}$, unless $\dt$ is very small, these BLs with oscillations can be quite thick, for example see Figure~\ref{fig:errshape_schro_dirk3} in \Srm\ref{ssec:NumExamples_Schroedinger}.
% --------------------------------------------------------------------------- 80
\begin{remark}[Multiple dimensions] \label{rem:MultiD}
Clearly, OR occurs in higher dimensions as well. Along smooth parts of the domain boundary, BLs should arise, similarly to the 1D BLs studied here. In addition, at corners, a breakdown in solution smoothness can also effect OR; see \cite{OstermannRoche1993} for error estimates.
\TheoremEnd
\end{remark}
%---------------------------------------------------------------------------- 80

% --------------------------------------------------------------------------- 80
\begin{figure}
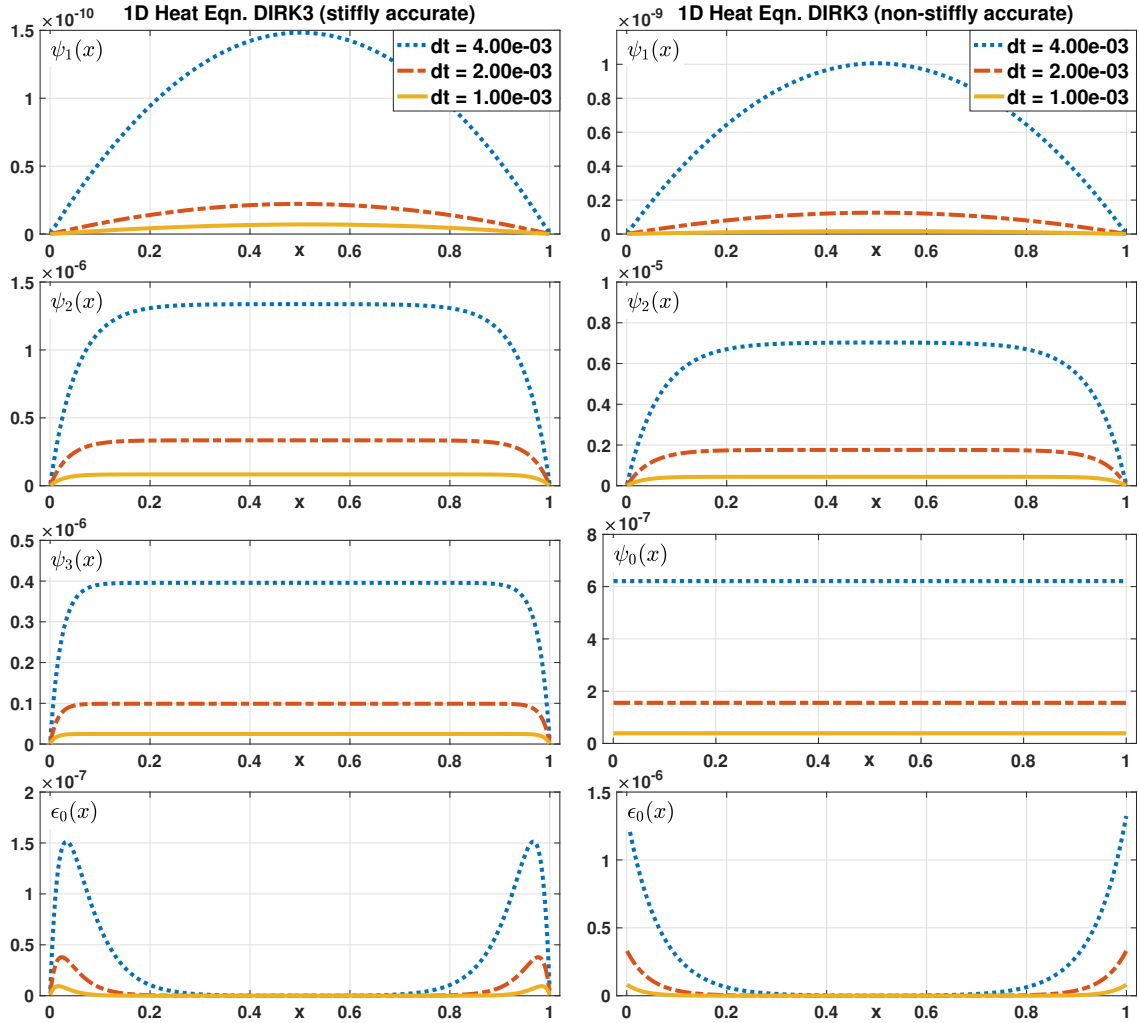

	\hfillL
	\includegraphics[width=0.47\textwidth]{%
		Figures/eigenmode_heat1d_dirk3s3_psi1}
	\hfill
	\includegraphics[width=0.47\textwidth]{%
		Figures/eigenmode_heat1d_dirk3s2_psi1}
	\hfillR	
	\\
	\hfillL
	\includegraphics[width=0.47\textwidth]{%
		Figures/eigenmode_heat1d_dirk3s3_psi2}
	\hfill
	\includegraphics[width=0.47\textwidth]{%
		Figures/eigenmode_heat1d_dirk3s2_psi2}
	\hfillR	\\
	\hfillL	
	\includegraphics[width=0.47\textwidth]{%
		Figures/eigenmode_heat1d_dirk3s3_psi3}
	\hfill
	\includegraphics[width=0.47\textwidth]{%
		Figures/eigenmode_heat1d_dirk3s2_psi0}
	\hfillR	\\
	\hfillL	
	\includegraphics[width=0.47\textwidth]{%
		Figures/eigenmode_heat1d_dirk3s3_epsilon0}
	\hfill
	\includegraphics[width=0.47\textwidth]{%
		Figures/eigenmode_heat1d_dirk3s2_epsilon0}
	\hfillR	
    \caption{Eigenmodes $\ef_i(x)$ (real part) and error $\errl(x)$ for a 3-stage, 3rd order, stiffly accurate DIRK (left), and a 2-stage, 3rd order, non-stiffly accurate DIRK (right). Three $\dt$ choices used.}
    \label{fig:ShapeEigenModes}
\end{figure}
% --------------------------------------------------------------------------- 80

%---------------------------------------------------------------------------- 80
\subsubsection{Numerical example.}
%---------------------------------------------------------------------------- 80
We illustrate the modal analysis with the same 1D heat equation example used in \Srm\ref{ssec:example_order_loss}: a time-varying, constant-in-space, exact solution, approximated with two DIRK schemes: a 3-stage stiffly accurate DIRK3 and a 2-stage non-stiffly accurate DIRK3. The errors are computed with the eigenmodes, obtained by solving \eqref{eq:eqn_eigenmodes} with standard second order finite differences on a fixed uniform grid with 2000 points.

Figure~\ref{fig:ShapeEigenModes} shows the eigenmodes $\ef_i(x)$, and the spatial part of the global-in-time error $\errl(x)$, for the stiffly accurate scheme (left panel), and the non-stiffly accurate scheme (right panel). Three different choices of time step $\dt$ are used.

For the 3-stage stiffly accurate DIRK3, we have
\begin{equation*}
	\ef_1 = O(\dt^3), \quad \ef_2=O(\dt^2), \quad \ef_3=O(\dt^2),
	\quad \mbox{and} \quad \zeromode\equiv0\;\mbox{(not shown).}
\end{equation*}
The modes $\ef_2$ and $\ef_3$ solve singularly perturbed BVPs, and produce BLs in the global-in-time error ($\errl = \errs_3$, since the scheme is stiffly accurate). All modes vanish at the boundary, hence the numerical solution has no error in the boundary values.

For the 2-stage non-stiffly accurate DIRK3, we have
\begin{equation*}
	\ef_1 = O(\dt^3), \quad \ef_2 = O(\dt^2), \quad\mbox{and}\quad
	\zeromode = O(\dt^2)\/.
\end{equation*}
The mode $\ef_2$ solves a singularly perturbed BVP, and produces a BL in the global error. The constant-in-space mode $\zeromode$ (associated to the zero eigenvalue) produces a mismatch in the boundary values for $\errl$. This is reflecting the fact that non-stiffly accurate schemes with conventional boundary conditions do not guarantee that the numerical solution satisfies the exact boundary conditions, see Remark~\ref{rem:BoundaryMismatch}.

For both schemes, (i)~the mode $\ef_1$ associated with the $O(1)$ eigenvalue has no BLs, and (ii)~the global-in-time error exhibits the full 3rd order accuracy away from the BLs.

% --------------------------------------------------------------------------- 80
% --------------------------------------------------------------------------- 80
\section{Weak Stage Order}
\label{sec:WeakStageOrder}
% --------------------------------------------------------------------------- 80
% --------------------------------------------------------------------------- 80
High stage order, i.e., $q > 1$, is not possible with DIRK schemes (see Remark~\ref{rem:q_equal_1}). In this section we introduce new conditions on $(A, \veci{b}, \veci{c})$ that relax the stage order condition to a more general one that is both: (i) devoid of order reduction (OR); and (ii) compatible with a DIRK structure. We therefore refer to the condition as \emph{weak stage order} (WSO).

Weak stage order addresses OR by means of only the RK time-stepping coefficients, and it is compatible with a DIRK structure (cf.~\cite{Scholz1989} for a similar in spirit approach for ROW methods). The concept of WSO generalizes the work by \cite{Rang2014} and provides the sharpest condition on the Butcher coefficients to alleviate OR in linear ODEs. For the time-stepping of PDEs, WSO can provide a practical approach for avoiding OR when using conventional b.c.. It should be stressed that neither high stage order, nor high weak stage order fully remove BLs from the numerical solution. Rather, they reduce the size of the BLs (to the order of the scheme). Thus, in general OR still occurs in the solution's derivatives.

% --------------------------------------------------------------------------- 80
\subsection{Definition of Weak Stage Order}
\label{ssec:WeakStageOrder_Definition}
% --------------------------------------------------------------------------- 80
The idea behind WSO is to find conditions on $(A, \veci{b}, \veci{c})$ independent of $\dt$, that decrease the amplitude (from $O(\dt^{q+1})$) of the singular terms $\vecipower{\Erralpha}{T} \veci{\errs}$ appearing in the error $\errl(x)$ from \eqref{eq:IRK_mat_eqn2}; equivalently viewed as decreasing the contributions $\vecipower{\Erralpha}{T}\vec{r}_i \, \ef_i$ of the singular functions $\ef_i$ in \eqref{eq:EigenModesLinComb}.  We show below that the invariant subspaces of the Butcher tableau matrix $A$ play a key role and define the WSO condition:
% --------------------------------------------------------------------------- 80
\begin{definition}[Weak stage order]\label{def:WeakStageOrder}
A Runge-Kutta scheme $(A, \veci{b}, \veci{c})$ has weak stage order $\soq \geq 1$, if there exists a vector space $\mathcal{V} \subseteq \mathbb{R}^s$ that contains the stage order residuals $\sov{j} \in \mathcal{V}$, for $1 \leq j \leq \soq$, and also satisfies the following two properties:
\begin{enumerate}[(i)]
\item \label{def:WSO_Orthog} (Orthogonality property) $\mathcal{V} \subset \vec{b}^\perp$, i.e., $\vecipower{b}{T} \veci{v} = 0$ for all $\veci{v} \in \mathcal{V}$.
\item \label{def:WSO_Invariant} (Invariant subspace property) $\mathcal{V}$ is $A$-invariant, i.e., $A\veci{v} \in \mathcal{V}$ for all $\veci{v} \in \mathcal{V}$.
\end{enumerate}
\end{definition}
% --------------------------------------------------------------------------- 80
% --------------------------------------------------------------------------- 80
\begin{remark}[Weak stage order as order conditions]
\label{def:WeakStageOrderPractical}
By the Cayley-Hamilton theorem, Definition~\ref{def:WeakStageOrder} is equivalent to (see \cite{KetchesonSeiboldShirokoffZhou2019} for a short proof): The vector $\vec{b}$ is orthogonal to the column space $C(G)$ of the (controllability) matrix
	\begin{equation}
		G :=
		\begin{pmatrix}
			\sov{1}, A\sov{1}, \ldots, A^{s-1}\sov{1}, \sov{2}, A\sov{2}, \ldots,
		A^{s-1}\sov{\soq}
		\end{pmatrix}  \in \mathbb{R}^{s \times s\soq}.
	\end{equation}	
The standard order conditions already imply, via Proposition~\ref{prop:StageOrderVectorOrthog}, that $\vec{b}$ is orthogonal to a subset of the columns of $G$. Hence, WSO can be viewed as imposing extra order conditions $\vec{b} \perp C(G)$.
\TheoremEnd
\end{remark}
% --------------------------------------------------------------------------- 80

Definition~\ref{def:WeakStageOrder} generalizes the notion of stage order, and is automatically satisfied by a scheme with classical stage order $q$. Every RK scheme has both classical stage order $q \geq 1$ and WSO $\soq \geq 1$, since $A \veci{e} = \veci{c}$ guarantees that $\sov{1} = \veci{0}$.  The (abstract) Definition~\ref{def:WeakStageOrder} is helpful in simplifying proofs involving WSO, while the alternative viewpoint in Remark~\ref{def:WeakStageOrderPractical} is useful in practice to construct schemes satisfying high WSO.  Note that WSO $\soq$ implies WSO $\soq-1$, which follows directly from the construction of $G$ in Remark~\ref{def:WeakStageOrderPractical}.  A simplifying criterion for WSO arises when the stage order residuals $\sov{j}$ are eigenvectors of $A$.  We refer to this situation as the weak stage order eigenvector criterion:
% --------------------------------------------------------------------------- 80
\begin{definition}[WSO eigenvector criterion]
\label{def:eig_criterion}
A RK scheme satisfies the eigenvector criterion of order $\soq_{e}$ if for each $1 \leq j \leq \soq_{e}$ there exists $\zeta_j$ such that $A \sov{j} = \zeta_j \sov{j}$, and $\vecipower{b}{T}\sov{j} = 0$.
\end{definition}
% --------------------------------------------------------------------------- 80

Weak stage order is a linear concept, and the analysis in this section shows that it remedies order reduction for linear IBVPs \eqref{eq:IBVP}. In contrast, for problems in which the root cause of order reduction itself is nonlinear, or time dependent, WSO may not achieve the same benefit that classical stage order does. See also \cite{KetchesonSeiboldShirokoffZhou2019}, which further devises additional schemes that satisfy the WSO eigenvector criterion, as well as limitations of the WSO eigenvector criterion.

% --------------------------------------------------------------------------- 80
\subsection{Impact of Weak Stage Order on Error Convergence and Boundary Layers}
\label{ssec:WeakStageOrder_OrderResults}
% --------------------------------------------------------------------------- 80
We show that weak stage order, paired with assumptions \eqref{eq:AssumptionsLS}, can avoid order reduction in RK schemes for the periodically forced solutions examined in \Srm\ref{sec:orderloss_gte}, i.e., that $\errl = O(\dt\,^{\min\{ \soq+1, p\}} )$. The following proposition demonstrates how solutions to  \eqref{eq:IRK_mat_eqn2} with a right hand side proportional to $\sov{j}$ for $j \leq \soq$ do not contribute to error $\errl(x)$.
% --------------------------------------------------------------------------- 80
\begin{proposition}\label{prop:weak_stage_orthogonality}
Consider a Runge-Kutta scheme $(A, \veci{b}, \veci{c})$ with WSO $\soq$, and let $M$ be given by \eqref{eq:MatrixM}. Then for any smooth function $f(x)$ and stage order residuals $\sov{j}$, $1\leq j \leq \soq$, the following quantities vanish for any $x \in \Omega$:
$\vecipower{b}{T} \veci{v}(x) = 0$ and $\vecipower{b}{T} A^{-1} \veci{v}(x) = 0$, where $\veci{v}(x)$ solves
\begin{equation}\label{eqn:prop_orthgonoality}		
  (I - M \diffop )\,\veci{v} = f(x)\,\sov{j}
  \quad \text{with b.c. }\; \veci{v} = 0.
\end{equation}
\end{proposition}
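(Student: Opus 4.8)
The plan is to prove the single structural fact from which both conclusions follow at once: the solution $\vec{v}(x)$ to \eqref{eqn:prop_orthgonoality} takes values in the weak-stage-order subspace $\mathcal{V}$ from Definition~\ref{def:WeakStageOrder} for every $x\in\Omega$. Once that is established, property~(i) of Definition~\ref{def:WeakStageOrder} gives $\vecipower{b}{T}\vec{v}(x)=0$ directly, since $\mathcal{V}\subset\vec{b}^{\perp}$. For the second identity, I would note that $A$ is invertible by Assumption~(\ref{eq:AssumptionsLS}e), so $A|_{\mathcal{V}}\colon\mathcal{V}\to\mathcal{V}$ is an injective linear map on a finite-dimensional space, hence a bijection; thus $\mathcal{V}$ is $A^{-1}$-invariant and $A^{-1}\vec{v}(x)\in\mathcal{V}\subset\vec{b}^{\perp}$, giving $\vecipower{b}{T}A^{-1}\vec{v}(x)=0$.

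The first step is the algebraic observation that $M$ maps $\mathcal{V}$ into itself and in fact restricts to $\dt\,A$ there. Indeed, for any $\vec{w}\in\mathcal{V}$ the orthogonality property gives $\vecipower{b}{T}\vec{w}=0$, so the rank-one part of $M$ in \eqref{eq:MatrixM} annihilates $\vec{w}$, i.e.\ $\tfrac{\dt}{z-1}\vec{e}\,\vecipower{b}{T}\vec{w}=\vec{0}$, leaving $M\vec{w}=\dt\,A\vec{w}\in\mathcal{V}$ by the $A$-invariance property~(ii). Since $\sov{j}\in\mathcal{V}$, the forcing $f(x)\sov{j}$ lies in $\mathcal{V}$ pointwise in $x$.

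Next I would fix a splitting $\mathbb{R}^s=\mathcal{V}\oplus\mathcal{W}$ and work in an adapted basis, in which the previous step makes $M$ block upper-triangular, $M=\left(\begin{smallmatrix}M_{11}&M_{12}\\0&M_{22}\end{smallmatrix}\right)$, with $M_{11}$ representing $M|_{\mathcal{V}}=\dt\,A|_{\mathcal{V}}$. Writing $\vec{v}=(\vec{v}_1,\vec{v}_2)$ and using that the $\mathcal{W}$-component of the forcing vanishes, the bottom block of \eqref{eqn:prop_orthgonoality} decouples into the homogeneous problem $(I-M_{22}\diffop)\,\vec{v}_2=0$ with $\vec{v}_2=0$ on $\partial\Omega$. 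The target is to conclude $\vec{v}_2\equiv0$, which is exactly the statement $\vec{v}(x)\in\mathcal{V}$.

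The main obstacle is precisely this uniqueness step, and it rests on the spectral localization of $M$. By block-triangularity the eigenvalues of $M_{22}$ form a subset of the spectrum of $M$, so Theorem~\ref{thm:MEigLocation}(3) places them outside the wedge $W_e(\theta_2)$; Assumption~(\ref{eq:AssumptionsLS}a) then ensures that $(I-\lambda\diffop)$ has trivial kernel under homogeneous boundary conditions for each such $\lambda$. Triangularizing $M_{22}$ and solving the resulting chain of scalar BVPs from the top down (each with homogeneous data) forces $\vec{v}_2\equiv0$; equivalently, one may invoke the uniform invertibility of the restricted resolvent as in Corollary~\ref{cor:invertibility}. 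Two small points require care: $\mathcal{V}$ need not admit an $M$-invariant complement, which is why I use the block-triangular rather than a block-diagonal form; and the second conclusion genuinely needs the invertibility of $A$ to promote the $A$-invariance of $\mathcal{V}$ to $A^{-1}$-invariance.
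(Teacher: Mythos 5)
Your proposal is correct and follows essentially the same route as the paper's proof: reduce both identities to showing $\veci{v}(x)\in\mathcal{V}$, observe that $M$ restricts to $\dt\,A$ on $\mathcal{V}$ (so $\mathcal{V}$ is $M$-invariant), pass to an adapted basis in which $M$ is block upper-triangular, decouple the homogeneous problem for the complementary component, and conclude it vanishes by the invertibility argument of Corollary~\ref{cor:invertibility}. The only deviations---using a general complement $\mathcal{W}$ instead of $\mathcal{V}^{\perp}$, and optionally spelling out uniqueness via the spectral localization of $M_{22}$ (Theorem~\ref{thm:MEigLocation}(3) plus Assumption~(\ref{eq:AssumptionsLS}a)) with triangular back-substitution rather than citing the corollary directly---are inessential refinements of the same argument.
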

% --------------------------------------------------------------------------- 80
\begin{proof}
Let $\mathcal{V}$ denote the $A$-invariant subspace in Definition~\ref{def:WeakStageOrder}. It suffices to show that $\veci{v}(x) \in \mathcal{V}$ for all $x \in \Omega$. Then $\vecipower{b}{T} \veci{v}(x) =0$ follows by property~(\ref{def:WSO_Orthog}) in Definition~\ref{def:WeakStageOrder}, and property~(\ref{def:WSO_Invariant}) implies that $\mathcal{V}$ is also $A^{-1}$-invariant, so that $A^{-1}\veci{v}(x) \in \mathcal{V}$, and thus $\vecipower{b}{T}A^{-1}\veci{v}(x) = 0$.

We show that $\veci{v}(x) \in \mathcal{V}$ by working in a coordinate basis defined by $\mathcal{V}$ and its orthogonal space $\mathcal{V}^{\perp}$. Let $\dimV = \text{dim}(\mathcal{V})$. Note that no assumption is made on $\dimV$ relative to $\soq$. If some of the vectors $\sov{j}$ are linearly dependent or vanish, $\dimV < \soq$ is possible; and $\dimV \geq \soq$ is possible if the vectors $\sov{j}$ are only contained in a larger $A$-invariant space, but do not span an $A$-invariant space themselves.

Let $\{ \veci{v}_1, \cdots, \veci{v}_{\dimV} \}$ and $\{ \veci{v}_{\dimV + 1}, \ldots, \veci{v}_{s} \}$ form two orthonormal bases for $\mathcal{V}$ and $\mathcal{V}^{\perp}$, respectively. Moreover, define the matrices $\projP = \begin{pmatrix} \veci{v}_1, \ldots, \veci{v}_{\dimV} \end{pmatrix} \in \mathbb{R}^{s \times \dimV}$ and $\projPperp = \begin{pmatrix} \veci{v}_{\sigma+1}, \ldots, \veci{v}_{s} \end{pmatrix} \in \mathbb{R}^{s \times (s - \dimV)}$, and denote the full orthogonal matrix $P = \begin{pmatrix} \projP, \projPperp \end{pmatrix} \in \mathbb{R}^{s\times s}$. We have $\vecipower{b}{T} \projP = 0$, thus (and similarly for $\vecipower{b}{T}A^{-1} \veci{v}(x)$):
\begin{equation*}
  \vecipower{b}{T} \veci{v}(x) =
  \vecipower{b}{T} P P^T \veci{v}(x) =
  \vecipower{b}{T} \projP \projP^T \veci{v}(x)
   + \vecipower{b}{T} \projPperp \projPperp^T \veci{v}(x)
   = \vecipower{b}{T} \projPperp \projPperp^T \veci{v}(x).
\end{equation*}

We now show that $\projPperp^T \veci{v}(x) = 0$, which will complete the proof. First, observe that $\mathcal{V}$ is also an $M$-invariant subspace: for any $\veci{v} \in \mathcal{V}$ we have
\begin{equation*}
  M\,\veci{v}
  = \left(\frac{\Delta t}{z-1}\vece\,\vecipower{b}{T} +
    \Delta t\,A\right)\veci{v}
  = \frac{\Delta t}{z-1}\vece\left(\vecipower{b}{T}\veci{v}\right)+
    \Delta t A\,\veci{v}
  = \Delta t \left( A\,\veci{v}\right) \in \mathcal{V},
\end{equation*}
which follows from the fact that $\mathcal{V}$ is $A$-invariant and orthogonal to $\veci{b}$.

Because $\mathcal{V}$ is $M$-invariant, the matrix $P^T M P $ (which is $M$ written in the coordinate basis $\{\veci{v}_j\}_{j=1}^s$) is block upper-triangular \cite[Chapter 8.6]{Nicholson2003}. Multiplying \eqref{eqn:prop_orthgonoality} by $P^T$, and using the block structure of $P^T M P$, we obtain
\begin{equation*}
  \left[ \begin{pmatrix} I & 0 \\ 0 & I \end{pmatrix}
   -
   \begin{pmatrix}
    \projP^T M \projP   & \projP^T M \projPperp \\
               0        & \projPperp^T M \projPperp
   \end{pmatrix}
   \diffop \right]
   \begin{pmatrix}
     \projP^T \veci{v}(x) \\
     \projPperp^T \veci{v}(x)
   \end{pmatrix}
   = f(x)
   \begin{pmatrix}
    \projP^T \sov{j} \\
    \projPperp^T \sov{j}
   \end{pmatrix}.
\end{equation*}
Hence the vector field $\projPperp^T \veci{v}(x)$ decouples from the $\projP^T \veci{v}(x)$ components. Moreover, the corresponding right hand side vanishes, $\projPperp^T \sov{j} = 0$, because $\sov{j} \in \mathcal{V}$. Hence,
\begin{equation}\label{eqn:reduced_equation}
   \left[ I - \projPperp^T M \projPperp \diffop \right]
   (\projPperp^T \veci{v}(x)) = 0, \quad \text{with b.c. }
   \projPperp^T \veci{v}(x) = 0,
\end{equation}
where the coordinate transformation does not modify the homogeneous b.c.. If $\projPperp^T \veci{v}(x) \neq 0$ were to solve equation \eqref{eqn:reduced_equation}, then equation \eqref{eqn:prop_orthgonoality} would not have a unique solution, in contradiction to Corollary~\ref{cor:invertibility}. Therefore, $\projPperp^T \veci{v}(x) = 0$ is the unique solution to \eqref{eqn:reduced_equation}.
\end{proof}
% --------------------------------------------------------------------------- 80

The importance of Proposition~\ref{prop:weak_stage_orthogonality} is that it does not depend on either $z$ or $\dt$. We now state the main theorem demonstrating that weak stage order avoids OR in IBVP.
% --------------------------------------------------------------------------- 80
\begin{theorem}\label{thm:weakordercondition}
Consider an $s$-stage, $p$-th order implicit Runge-Kutta scheme with weak stage order $\soq \geq 1$, satisfying the assumptions in \eqref{eq:AssumptionsLS}. Then the convergence order for periodic solutions with conventional b.c.\ is $\min\{p,\soq+1\}$, i.e., $\errl =  O(\dt\,^{\min\{p,\soq+1\}})$.
\end{theorem}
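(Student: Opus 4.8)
The plan is to work directly from the closed-form expression \eqref{eq:IRK_mat_eqn2} for the global error,
\[
\errl = \frac{1}{z-1+\vecipower{b}{T}A^{-1}\vece}
\left( z\,\vecipower{b}{T}A^{-1}\veci{\errs}
- \vecipower{b}{T}A^{-1}\veci{\ltes} + \ltel \right),
\]
and to exploit the linearity of the stage BVP \eqref{eq:IRK_mat_eqn} to split its source into a part that WSO annihilates and a small remainder. By Assumption~(\ref{eq:AssumptionsLS}e) the quantity $z-1+\vecipower{b}{T}A^{-1}\vece$ tends to the nonzero constant $\vecipower{b}{T}A^{-1}\vece$ as $\dt\to0$, so the prefactor is $O(1)$ and bounded away from zero; it therefore suffices to estimate the three terms in parentheses. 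The last of these is immediate: the $p$-th order conditions give $\ltel=O(\dt^{p+1})$.

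First I would decompose the stage LTE \eqref{eq:ltes_vector} according to whether its stage order residual lies in the WSO subspace $\mathcal{V}$ of Definition~\ref{def:WeakStageOrder}. Writing $\veci{\ltes}=\veci{\ltes}_{\le\soq}+\veci{\ltes}_{>\soq}$ with $\veci{\ltes}_{\le\soq}=U^*(x)\sum_{j=2}^{\soq}\frac{(\imath\omega)^j\dt^j}{(j-1)!}\sov{j}$ (recall $\sov{1}=\vec{0}$), every residual in the first sum satisfies $\sov{j}\in\mathcal{V}$, while the tail obeys $\veci{\ltes}_{>\soq}=O(\dt^{\soq+1})$. Consequently the source $\veci{\VecErr}=\tfrac1z\big(\veci{\ltes}+\tfrac{\ltel}{z-1}\vece\big)$ of \eqref{eq:IRK_mat_eqn} splits, and by linearity so does its solution, $\veci{\errs}=\veci{\errs}^{(1)}+\veci{\errs}^{(2)}+\veci{\errs}^{(3)}$, driven respectively by $\tfrac1z\veci{\ltes}_{\le\soq}$, $\tfrac1z\veci{\ltes}_{>\soq}$, and $\tfrac{\ltel}{z(z-1)}\vece$.

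The heart of the argument is Proposition~\ref{prop:weak_stage_orthogonality}, applied termwise to the WSO part: for each $j\le\soq$ the solution of $(I-M\diffop)\veci{v}=f(x)\sov{j}$ satisfies $\vecipower{b}{T}A^{-1}\veci{v}=0$, so by superposition $\vecipower{b}{T}A^{-1}\veci{\errs}^{(1)}=0$, independently of $z$ and $\dt$. Since each $\sov{j}\in\mathcal{V}$ and $\mathcal{V}$ is both $A^{-1}$-invariant and orthogonal to $\vec{b}$, one likewise has $\vecipower{b}{T}A^{-1}\veci{\ltes}_{\le\soq}=0$; thus the WSO contributions drop out of both $\vecipower{b}{T}A^{-1}\veci{\errs}$ and $\vecipower{b}{T}A^{-1}\veci{\ltes}$. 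For the two surviving pieces I would invoke the uniform bound of Corollary~\ref{cor:invertibility}: $\veci{\errs}^{(2)}=(I-M\diffop)^{-1}\big(\tfrac1z\veci{\ltes}_{>\soq}\big)=O(\dt^{\soq+1})$ and, using $\ltel=O(\dt^{p+1})$ together with $z-1=O(\dt)$, $\veci{\errs}^{(3)}=O(\dt^{p})$. Hence $\vecipower{b}{T}A^{-1}\veci{\errs}=O(\dt^{\min\{\soq+1,p\}})$ and $\vecipower{b}{T}A^{-1}\veci{\ltes}=O(\dt^{\soq+1})$, and collecting all three terms yields $\errl=O(\dt^{\min\{\soq+1,p\}})$.

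The conceptual work is carried entirely by Proposition~\ref{prop:weak_stage_orthogonality} (the cancellation of the stage-order-limited modes) and Corollary~\ref{cor:invertibility} (the $\dt$-uniform bound that controls the boundary-layer amplitude); given these, the theorem reduces to bookkeeping of orders. The point demanding care is precisely the uniformity of the estimate on $\veci{\errs}^{(2)}$ as $\dt\to0$: the bound must not degrade as the small eigenvalues of $M$ collapse to zero, and this is exactly what Corollary~\ref{cor:invertibility} provides, its proof drawing on the uniform conditioning in Assumption~(\ref{eq:AssumptionsLS}d) (which in particular renders $M$ diagonalizable, so no Jordan-block complications arise under the standing hypotheses).
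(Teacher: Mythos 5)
Your proof is correct and takes essentially the same route as the paper: you isolate the weak-stage-order part $\veci{\ltes}_{\le\soq}$ of the source (the paper's $\veci{\varphi}$), annihilate its contribution to $\vecipower{b}{T}A^{-1}\veci{\errs}$ via Proposition~\ref{prop:weak_stage_orthogonality}, and control the remaining pieces through the $\dt$-uniform bound of Corollary~\ref{cor:invertibility}. Your three-way splitting of the stage error is just a finer-grained version of the paper's two-way decomposition $\veci{\errs}=\veci{\errs}_{\varphi}+(\veci{\errs}-\veci{\errs}_{\varphi})$, with identical order bookkeeping.
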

% --------------------------------------------------------------------------- 80
\begin{proof}
Using the definition of the LTEs \eqref{eq:ltes_vector}, write $\vec{\lte} = \veci{\varphi}(x) + O(\dt^{\soq + 1})$ where $\veci{\varphi}(x)$ is a linear combination of stage order residuals: $\veci{\varphi}(x) := U^*(x) \sum_{j = 2}^{\soq} \frac{(\imath \omega)^j\dt\,^{j}}{(j-1)!}\,\sov{j}$.  Next, we expand the error $\errl(x)$ in \eqref{eq:IRK_mat_eqn2} in terms of $\dt \ll 1$, and use the fact that $\ltel = O(\dt\,^{p+1})$, and $z = 1 + O(\dt)$, to obtain:
\begin{equation}\label{eq:err_expansion}
  \errl(x) =
  \left( \frac{1}{\vecipower{b}{T} A^{-1} \veci{e}} + O(\dt) \right)		
  \left( \vecipower{b}{T} A^{-1} \veci{\errs}(x) -
  \vecipower{b}{T} A^{-1} \veci{\varphi}(x) +
     O(\dt\,^{\min\{p+1,\soq+1\}}) \right).
\end{equation}
In \eqref{eq:err_expansion}, $\vecipower{b}{T} A^{-1} \veci{e} \neq 0$ by assumption (\ref{eq:AssumptionsLS}e). Furthermore, the term $\vecipower{b}{T} A^{-1} \veci{\varphi}(x) = 0$, because $\varphi(x) \in \mathcal{V}$ is a linear combination of $\sov{j}$ (see Proposition~\ref{prop:weak_stage_orthogonality}).  Therefore we need to estimate $\vecipower{b}{T} A^{-1} \veci{\errs}(x)$. Proposition~\ref{prop:weak_stage_orthogonality} implies that $\vecipower{b}{T} A^{-1} \veci{\errs}_{\varphi} = 0$, where $\veci{\errs}_{\varphi}$ solves
\begin{equation*}
  \veci{\errs}_{\varphi} - M \diffop\,\veci{\errs}_{\varphi} =
  z^{-1} \, \veci{\varphi}(x),	\quad \text{with b.c. } \veci{\errs}_{\varphi} = 0.
\end{equation*}
Since $\veci{\errs} - \veci{\errs}_{\varphi}$ solves a BVP similar to \eqref{eq:IRK_mat_eqn}  with right hand side $z^{-1}(\veci{\lte}+\frac{\ltel\vece}{z-1}-\veci{\varphi})=O(\dt\,^{\soq+1}) + O(\dt\,^{p})$,
Corollary~\ref{cor:invertibility} implies that
$\veci{\errs} - \veci{\errs}_{\varphi} = O(\dt\,^{\min\{p, \soq+1\}})$.
We may then subtract $\veci{\errs}_{\varphi}$ from $\veci{\errs}$ and compute
\begin{equation*}
  \vecipower{b}{T} A^{-1} \veci{\errs}
   = \vecipower{b}{T} A^{-1} (\veci{\errs}-\veci{\errs}_{\varphi} )
   = O(\dt\,^{\soq+1}) + O(\dt\,^p) = O(\dt\,^{\min\{ p, \soq+1\}})\;,
\end{equation*}
which finalizes the proof.
\end{proof}
% --------------------------------------------------------------------------- 80

Theorem~\ref{thm:weakordercondition} demonstrates that order reduction in function value (for periodic solutions) can be avoided with a weak stage order $\soq = p - 1$. The following remark, and numerical calculations in the following sections, indicate that high WSO removes order reduction for non-periodic solutions as well.
% --------------------------------------------------------------------------- 80
\begin{remark} \label{rem:OstermannRoche}
Ostermann and Roche \cite{OstermannRoche1992} proved (under assumptions similar to \eqref{eq:AssumptionsLS}) that if a RK scheme applied to \eqref{eq:IBVP} with homogeneous boundary conditions satisfies:
\begin{align}\label{eq:cond_Wk}
	W_k(z) \equiv 0, \quad \textrm{for } 1 \leq k \leq \soq \quad \quad \textrm{where} \quad W_k(z) := \frac{k \vecipower{b}{T} (I - z A)^{-1} \sov{k} }{R(z) - 1},
\end{align}
then the scheme converges in $L^{r}$ ($1 < r < \infty$), with order $\min\{ p, \soq+2+\nu \}$, where $\nu$ depends\footnote{For $\diffop = \partial_{xx}$, and $L^2$ convergence, $\nu = \frac{1}{4} - \varepsilon$ for any $\varepsilon > 0$, so effectively one can take $\nu = \frac{1}{4}$.} on $\diffop$ and $r$.  The focus of \cite{OstermannRoche1992} was to establish $L^{r}$ convergence results, and not to investigate what conditions would guarantee \eqref{eq:cond_Wk}.  It is easy to verify that WSO $\tilde{q}$ immediately implies \eqref{eq:cond_Wk} since $\mathcal{V}$ is an invariant subspace of $(I-zA)$, and $\vecipower{b}{T}(I - z A)^{-1} \sov{k} = 0$ for all $1 \leq k \leq \soq$.

Note that (i)~homogeneous b.c.\ on $u^*$ increase the convergence rate to $\min\{p,\soq + 2\}$; while (ii)~the constant $\nu$ stems from measuring the BL size of the $\ef_i$'s in the $L^r$ norm, i.e., $\ef_i$ has BL width $O(\sqrt{\dt})$ for $\diffop = \partial_{xx}$.  Items (i--ii) imply that the  convergence rate of $\min\{p, \tilde{q}+2 + \nu\}$ proved in \cite{OstermannRoche1992} is consistent with Theorem~\ref{thm:weakordercondition} and~\S\ref{sec:orderloss_gte}.
\TheoremEnd
\end{remark}
% --------------------------------------------------------------------------- 80

% --------------------------------------------------------------------------- 80
\subsection{A DIRK Scheme with High Weak Stage Order}
\label{ssec:WeakStageOrder_DIRK}
% --------------------------------------------------------------------------- 80
An important advantage of WSO is that it allows DIRK schemes to avoid order reduction (cf.~Remark~\ref{rem:q_equal_1}). Here we present a stiffly accurate, L-stable, 4-stage, 3rd order DIRK scheme with WSO 2. This scheme is constructed using the eigenvector criterion in Definition~\ref{def:eig_criterion}, i.e., the stage order residual $\sov{2}$ is a right eigenvector of $A$. The coefficients $A = (a_{ij})\in\mathbb{R}^{4\times 4}$, and $\veci{b}$, $\veci{c}\in\mathbb{R}^4$ are given by
\begin{equation}
\label{WSO:scheme}
\begin{split}
	A &=
	{\footnotesize
		\setlength\arraycolsep{3pt}
		\begin{bmatrix}
			0.019000728905359 & & & \\
			0.404346056017447 & \phantom{-}0.384357175123333 & & \\
			0.064879084117003 & -0.163896402946036 & 0.515452312221597 & \\
			0.023435493738931 & -0.412078778885435 & 0.966611612813460 & 0.422031672333044\\
		\end{bmatrix},
	}	
	\\
	b_i &= a_{4i}\quad\mbox{and}\quad
	c_i = \sum_{j=1}^i a_{ij}\quad\mbox{for}\quad 1\leq i \leq 4.
\end{split}
\end{equation}
In line with \cite{KetchesonSeiboldShirokoffZhou2019}, this scheme has been found by searching the parameter space of stiffly accurate 4-stage DIRK schemes (with nonzero diagonal entries), while imposing the order conditions \eqref{eq:ordercondition}, the WSO eigenvector criterion (Def.~\ref{def:eig_criterion}), and A-stability (verified by evaluating the stability function $R(\zeta)$ along the imaginary axis) as constraints. MATLAB's \texttt{fmincon} (with default settings) is employed, minimizing the $L^2$ norm of the residual of the 4th order conditions, starting from thousands of randomly chosen initial points, and selecting the scheme with the smallest objective function. It has generally been observed that this optimization problem is non-convex and not well-conditioned; hence, the scheme \eqref{WSO:scheme} should not be expected to be optimal. However, it does satisfy all constraints up to machine precision and yields good convergence results for various test problems, as shown in \Srm\ref{sec:NumericalExamples}.

Weak stage order reduces the magnitude of the coefficients $\vecipower{\Erralpha}{T}\vec{r}_i$ in front of the singular functions $\ef_i$ ($2 \leq i \leq 3$) in \eqref{eq:EigenModesLinComb}. This decreases the amplitude of the boundary layers that contribute to the error expansion for $\errl$.  For example, in the scheme \eqref{WSO:scheme}:  $A\sov{2}=a_{11}\sov{2}$ implies $\sov{2}$ is a right eigenvector of $M$ (for any $\dt$). Without loss of generality, setting $\vec{r}_2 = \sov{2}$, renders the coefficient $\vecipower{\Erralpha}{T}\vec{r}_2 \propto \vecipower{b}{T}\,A^{-1}\, \sov{2} = 0$ so that $\ef_2$ does not contribute to the error $\errl$. Furthermore, one can work out that $\vecipower{\Erralpha}{T}\vec{r}_3 \, \ef_3 = \vecipower{\Erralpha}{T}\vec{r}_4 \, \ef_4 = O(\dt^3)$ so that the singular modes $\ef_3$ and $\ef_4$ contribute one order less to the global error ($\zeromode = 0$ due to stiff accuracy). The BL amplitude in the error $\errl$ is then reduced (but not eliminated) to $O(\dt\,^3)$.  One will still observe a further order reduction in the solution derivatives.

% --------------------------------------------------------------------------- 80
% --------------------------------------------------------------------------- 80
\section{Modified Boundary Conditions}
\label{sec:MBC}
% --------------------------------------------------------------------------- 80
% --------------------------------------------------------------------------- 80
This section presents an alternative approach for avoiding order reduction by modifying the prescribed RK b.c.---hereon referred to as \emph{modified boundary conditions} (MBC). The concept of MBC itself is not new (see~\S\ref{ssec:PriorResearch}), with the most general formulation given in \cite{Alonso2002, AlonsoCano2004}.  The purpose of this section is to show how MBC can be systematically derived by removing the boundary layers in the RK spatial approximation error. The advantage of MBC (over weak stage order) is that they do not restrict the RK scheme that is used; the disadvantage is that they are more complicated to implement. In \Srm\ref{ssec:MBC_derivation} we derive MBC via a power series expansion; and in \Srm\ref{ssec:MBC_BoundaryMismatch} we show that they suitably reduce the magnitude of BLs, and also reduce any boundary mismatch for non-stiffly accurate schemes.

% --------------------------------------------------------------------------- 80
\subsection{Derivation of MBC via Power Series Expansion}
\label{ssec:MBC_derivation}
% --------------------------------------------------------------------------- 80
In this subsection we choose the b.c.\ for \eqref{eq:dirk_step} so that the solution $\veci{u}^{n+1}$ may be expanded in formal powers of $\dt$ (uniformly across the entire domain) up to the order of the RK scheme. This will, effectively, suppress the BLs in $\veci{u}^{n+1}$ up to the scheme's order (but not to all orders) and alleviate order reduction.

In the absence of BLs, the stage vector $\vecipower{u}{n+1}$ can be written via a formal power series expansion as: $\vecipower{u}{n+1}_p \sim \sum_{j\geq 0}\dt\,^{j}\,\veci{U}_j$. Substituting this expansion into \eqref{eq:dirk_step} and collecting equal powers of $\dt$ leads to expressions for $\veci{U}_j$. When $\diffop$ is linear, the power series expansion for $\vecipower{u}{n+1}_p$ reduces to the Neumann series expansion \cite[Chapter 6]{Schecter2000}, and results in a recursive formula for $\veci{U}_j$:
\begin{equation*}
 \veci{U}_j = A\diffop\,\veci{U}_{j-1}\;\;\mbox{for}\;\; j\geq 2\quad
   \mbox{with}\quad
 \veci{U}_1 = A\diffop\,\veci{U}_0+A\vecipower{f}{n+1}\;\;
   \mbox{and}\;\;\veci{U}_0 = u^n\vece.
\end{equation*}
Thus $\veci{U}_j=(A\diffop)^j u^n\vece + (A\diffop)^{j-1}(A\vecipower{f}{n+1})$ for $j\geq 1$. Hence, $\vecipower{u}{n+1}_p$ takes the form
\begin{equation}\label{eq:FormalNeumannExpansion}
	\vecipower{u}{n+1}_p \sim
	u^n\vece + \sum_{j\geq 1}\,\dt\,^j
	\left(A^j\diffop^j\,u^n\vece + A^j\diffop^{j-1}\vecipower{f}{n+1}\,\right).
\end{equation}
We refer to the expansion \eqref{eq:FormalNeumannExpansion} (also known in the matched asymptotic expansions theory~\cite{BenderOrszag1978, CarrierPearson1988, MHHolmes1995, kevorkian1996multiple}) as the \emph{regular solution} to \eqref{eq:dirk_step}. If $\vecipower{f}{n+1}$ is (infinitely) smooth, the regular solution is a particular solution to \eqref{eq:dirk_step} and has no boundary layers---but does not satisfy homogeneous boundary conditions $\vecipower{u}{n+1}_p \neq 0$. Hence, to avoid BL in $\veci{u}^{n+1}$ (to some order), we need the b.c.\ of \eqref{eq:dirk_step} to match the values of $\vecipower{u}{n+1}_p$.

Truncating the series \eqref{eq:FormalNeumannExpansion} up to the scheme's order $p$, and evaluating at the boundary, yields a set of b.c.\ that match \eqref{eq:FormalNeumannExpansion} up to order $p$.  The PDE \eqref{eq:IBVP} can then be use to replace terms involve high powers of $\diffop$, in terms of the data on the boundary $g(t)$. A technical detail is that, in \eqref{eq:FormalNeumannExpansion}, the operator ${\diffop}^j$ is not applied to the exact solution, but to the numerical solution which does not satisfy the PDE exactly. Hence we (i) express the numerical solution $u^n = u^*(t_n) + \errl^n$ in terms of the exact solution $u^*$ and discretization error $\errl^n$ at time $t_n$; and (ii) use the PDE $u^*_t = \diffop u^* + f$ to replace $\diffop^j u^*(t_n)$ by $\partial_t^j u^*(t_n)$ and the forcing $f$ at time $t_n$. Taylor-expanding $\vecipower{f}{n+1}$ at $t_n$, the truncated expansion yields
\begin{equation}\label{eq:MBC_series}
\begin{split}
\vecipower{u}{n+1}_p = u^n\vece
+ \sum_{j=1}^{p} \dt^j\!
&\Bigg[ \partial_t^j u^*(t_n)A^{j-1}\vecc +
\sum_{k=2}^{j-1}\left(\diffop^{j-k-1}\partial_t^{k}f^n\right)\\[-.2em]
&\times\left(\tfrac{1}{k!}A^{j-k}\vecipower{c}{k}-A^{j-1}\vecc\right) \Bigg]
+ \sum_{j=1}^{p} \dt^j A^{j-1} \vecc\,\diffop^j\,\errl^n\/.
\end{split}
\end{equation}

In equation \eqref{eq:MBC_series}, $\errl^n$ is the error incurred by the formal expansion, so that by construction it is assumed to be $O(\dt\,^p)$. The MBC are then obtained by neglecting the error term $\errl^n$, and evaluating the truncated series \eqref{eq:MBC_series} at the boundary:
\begin{equation}\label{eq:MBC_generalformula}
	\mbc{} := g^n\,\vece + \sum_{j=1}^{p}
	\dt^j\! \left[ \partial_t^j g^n A^{j-1}\vecc +
	\sum_{k=2}^{j-1}\left(\diffop^{j-k-1}\partial_t^{k}f^n\right)
	\!\left(\frac{A^{j-k}\vecipower{c}{k}}{k!}-A^{j-1}\vecc\right)\!\right]\!.
\end{equation}
Here $u^n$ at the boundary was set to $g^n$. By construction, $\mbc{}$ matches \eqref{eq:FormalNeumannExpansion} up to the scheme's order $p$, and incorporates b.c.\ information. The MBC are unique up to order $p$, i.e., any other b.c.\ that suppress the singular behavior up to the same order, can differ from the MBC only by $O(\dt\,^{p+1})$ terms. As an example, the 3rd order MBC (MBC3), i.e., $p=3$, take the following form:
\begin{equation*}
 \mbc{} = g^n\vece + \dt\partial_t g^n\vecc + \dt^2\partial_t^2 g^n A\vecc
 + \dt^3\left( \partial_t^3 g^n A^2\vecc + \left(\partial_t^2 f^n\right)
 \left(\textstyle\frac{A\vecipower{c}{2}}{2}-A^2\vecc\right)
 \right).
\end{equation*}

This derivation shows that any b.c.\ one prescribes that agrees with $\mbc{}$ up to the order of the method, will remove order reduction---for instance those obtained by \cite{Alonso2002}.

% --------------------------------------------------------------------------- 80
\begin{remark}
In two important special cases, the MBC \eqref{eq:MBC_generalformula} simplify. First, when the boundary data $g$ and the forcing $f$ at the boundary are time-independent, the summation in \eqref{eq:MBC_generalformula} vanishes. This reflects the fact that order reduction does not arise for autonomous problems. Second, the MBC $\mbc{}$ can also be written as the conventional b.c., modified by a sum involving only the stage order residuals $\sov{j}$. Thus, when the RK scheme's stage order satisfies $q\geq p$, all terms involving the stage order residuals vanish, implying that the MBC $\mbc{}$ agree with the conventional b.c.\ up to the scheme's order $p$.
\TheoremEnd
\end{remark}
% --------------------------------------------------------------------------- 80

% --------------------------------------------------------------------------- 80
\subsection{Boundary Value Mismatch}
\label{ssec:MBC_BoundaryMismatch}
% --------------------------------------------------------------------------- 80
Although the MBC \eqref{eq:MBC_generalformula} can be used to avoid order reduction, they may still result in a small mismatch of the numerical solution at the boundary with the exact prescribed boundary data, i.e. $u^{n+1} \neq g^{n+1}$, even for stiffly accurate schemes (see Remark~\ref{rem:BoundaryMismatch}). Enforcing $u^{n+1} = g^{n+1}$, however, may be of practical interest. In this subsection, we first show that the MBC yields a boundary mismatch error, $\errl^{n+1} = u^{n+1}-g^{n+1}$, that is always of the scheme's order (which is good). Moreover, we provide a recipe to further modify $\mbc{}$ to ensure that $u^{n+1}=g^{n+1}$ while still avoiding order reduction (which is even better). Note that the MBC in \cite{Alonso2002} reduces the boundary mismatch to the scheme's order, however, they did not investigate enforcing the exact boundary conditions.

% --------------------------------------------------------------------------- 80
\subsubsection{Quantification of the boundary error in the MBCs.}
% --------------------------------------------------------------------------- 80
To obtain an expression for the boundary error, we use \eqref{eq:dirk_step} to rewrite the update \eqref{eq:dirk_step2} in terms of $\veci{u}^{n+1}$ in lieu of $\diffop \, \veci{u}^{n+1}$:
\begin{equation}\label{eq:IRK_explicitupdate}
  u^{n+1} = u^n + \vecipower{b}{T} A^{-1}\,
  (\vecipower{u}{n+1} - u^n\vece).
\end{equation}
Evaluating \eqref{eq:IRK_explicitupdate} at the boundary and subtracting the true boundary value yields an expression for the error at the boundary
\begin{equation}\label{eq:BoundaryError}
 \errl^{n+1} = (1-\vecipower{b}{T}A^{-1}\vece)\,\errl^n +
      \vecipower{b}{T}A^{-1}(\vecipower{u}{n+1}-g^n\vece) + g^n-g^{n+1}.
\end{equation}
We may quantify the boundary mismatch $\errl^{n+1}$ introduced by the MBC, by (i)~substituting the MBC $\vecipower{u}{n+1} = \mbc{}$ from equation \eqref{eq:MBC_generalformula} into \eqref{eq:BoundaryError}, and (ii)~Taylor-expanding $g^{n+1}$ at $t_{n}$ to obtain
\begin{equation*}
 \begin{split}
  \errl^{n+1} &= (1-\vecipower{b}{T}A^{-1}\vece)\,\errl^n + \sum_{j=1}^{p}\,
       \dt^j\,\left[ \partial_t^j g^n\,\left(\vecipower{b}{T}A^{j-2}\vecc -
       \frac{1}{j!}\right)\right.\\
            &+\left.\sum_{k=2}^{j-1}\,\diffop^{j-k-1}\,\partial_t^{k}f^n\,
       \left(\frac{1}{k!}\,\vecipower{b}{T}A^{j-k-1}\vecipower{c}{k}-
       \vecipower{b}{T}A^{j-2}\vecc \right)\right]+ O(\dt\,^{p+1})\/.
 \end{split}
\end{equation*}
In the above expression for $\errl^{n+1}$, the first $p$ terms in the summation vanish due to the order conditions \eqref{eq:ordercondition}. Hence $\errl^{n+1}=O(\errl^{n}) + O(\dt\,^{p+1})$, which implies that the global error at the boundary is at most $O(\dt\,^p)$. In order words: the MBC generally introduce an error in $u^{n+1}$ at the boundary, but order reduction is avoided.

% --------------------------------------------------------------------------- 80
\subsubsection{Eliminating boundary mismatch.}
% --------------------------------------------------------------------------- 80
Equation \eqref{eq:BoundaryError} can be used to further modify $\mbc{}$ to ensure that the numerical solution satisfies the true b.c.\ at every time step. If $\errl^{n} = 0$ for all $n$, then the numerical solution $\vecipower{u}{n+1}$ at the boundary satisfies
\begin{equation}\label{eq:ExactBC_constraint}
 \vecipower{b}{T}A^{-1}\vecipower{u}{n+1} =
 (\vecipower{b}{T}A^{-1}\vece -1) g^n + g^{n+1}.
\end{equation}
Conversely, if equation \eqref{eq:ExactBC_constraint} holds and the initial data satisfy the true b.c., i.e., $\errl^0 = 0$, then $\errl^{n} = 0$ for all $n>0$. Equation \eqref{eq:ExactBC_constraint} defines one linear constraint on the values of $\vecipower{u}{n+1}$ at the boundary. Hence to ensure that $\errl^{n+1} = 0$, one only needs to modify the component of $\mbc{}$ in the direction of $\vecipower{b}{T}A^{-1}$ to satisfy the constraint \eqref{eq:ExactBC_constraint}, while keeping components orthogonal to $\vecipower{b}{T}A^{-1}$ unchanged. This leads to a new set of MBC
\begin{equation}\label{eq:EBCformula}
 \mbc{}^{\,*} = \mbc{} -
  \left[\vecipower{b}{T}A^{-1}\left(\mbc{}-g^n\vece\right) +
        g^{n} - g^{n+1}\right]
  \frac{(\vecipower{b}{T}A^{-1})^T}{\|\vecipower{b}{T}A^{-1}\|^2}.
\end{equation}
By construction, the b.c.\ \eqref{eq:EBCformula} $\vecipower{u}{n+1} = \mbc{}^{\,*}$ satisfy the linear constraint \eqref{eq:ExactBC_constraint}, and hence ensure that $\errl^{n} = 0$ for every $n$ (provided that $\errl^0 = 0$). In addition, the modification $\vecipower{b}{T}A^{-1}\left(\mbc{}-g^n\vece\right) + g^{n} - g^{n+1}$ in formula \eqref{eq:EBCformula} is only an $O(\dt\,^{p+1})$ correction to $\mbc{}$.  Hence, the boundary conditions $\mbc{}^{\,*}$ still suppress the singular behavior in the numerical solution, to order $p$, and thereby avoids order reduction.  For stiffly accurate RK schemes (i.e., $\vecipower{b}{T}A^{-1} = (0,\dots,0,1)$),
$\mbc{,i}^{\,*} = \mbc{,i}$ for stages $1 \leq i \leq s-1$, while $\mbc{,s}^{\,*} = g^{n+1}$.

% --------------------------------------------------------------------------- 80
\subsection{Limitations of MBC}\label{ssec:MBC_Limitations}
% --------------------------------------------------------------------------- 80
The MBC formulas derived above hold for linear problems, where the power series solution for $\vecipower{u}{n+1}_p$ matches a Neumann series expansion. A key part of the derivation is to use the PDE to express the MBC in terms of $\partial_t^j g(t_n)$ and $\diffop^{i}\partial_t^j f(t_n)$, which are computable from the data $g$ and $f$. Consequently, MBC are challenging to apply when the data $g$ and $f$ are given in a way that their derivatives are difficult to obtain/compute.

Another fundamental difficulty arises when $\diffop$ is nonlinear. In this case, the power series expansion of the solution to \eqref{eq:dirk_step} involves, in general, terms that are not directly computable from the known data $g$ and $f$. Such a limitation may seriously hinder the practical use of MBC for nonlinear problems. For example, consider the viscous Burgers' equation (see \Srm\ref{ssec:NumExamples_VicousBurgers}) where $\diffop u$ is replaced by the nonlinear operator $\mathcal{N} u = \nu u_{xx} - uu_x$. When evaluating the truncated power series expansion at the boundary, the terms up to 2nd order in $\dt$ \emph{can} be expressed in terms of $g(t_n)$, $\partial_t g(t_n)$ and $\partial_{tt}g(t_n)$. However, the 3rd order term contains the boundary evaluation of $(\partial_t u^*(t_n))\partial_x(\partial_t u^*(t_n))$, which requires knowledge of spatial derivatives of the exact solution.

% --------------------------------------------------------------------------- 80
% --------------------------------------------------------------------------- 80
\section{Numerical Examples}
\label{sec:NumericalExamples}
% --------------------------------------------------------------------------- 80
% --------------------------------------------------------------------------- 80
In this section we illustrate the order reduction phenomenon, and the two remedies developed above (weak stage order (\Srm\ref{sec:WeakStageOrder}) and modified b.c.\ (\Srm\ref{sec:MBC})), in several numerical examples: heat equation (\Srm\ref{ssec:NumExamples_HeatEqn}), Schr\"odinger (\Srm\ref{ssec:NumExamples_Schroedinger}), advection-diffusion (\Srm\ref{ssec:NumExamples_AdvectionDiffusion}), viscous Burgers' (\Srm\ref{ssec:NumExamples_VicousBurgers}), linear advection and Airy's equation (\Srm\ref{ssec:NumExamples_Airy}). The method of manufactured solutions \cite{SalariKnupp2000} is used to construct a solution in each case. The spatial approximation is conducted via fourth-order centered differences on a fixed grid with 10000 cells. This renders spatial approximation errors negligible, thus isolating temporal discretization errors (measured in the maximum norm, unless noted otherwise), in line with the analysis in \Srm\ref{sec:orderloss_gte}.

We focus on stiffly accurate DIRK schemes here due to their practical interest. In each example, we first demonstrate OR incurred with the standard 3-stage, 3rd-order DIRK scheme with weak stage order (WSO) 1 \eqref{eq:DIRK3}. Then we show how MBC, applied to the same RK scheme, recover the full order of convergence. Finally, we show that the new 4-stage, 3rd-order DIRK scheme with WSO 2 (\Srm\ref{ssec:WeakStageOrder_DIRK}) recovers the full convergence order (for function values) as well. The examples also highlight some limitations of the new approaches, such as arising in high-order MBC for nonlinear problems or the recovery of the full order in derivatives.

% --------------------------------------------------------------------------- 80
\subsection{Heat Equation}\label{ssec:NumExamples_HeatEqn}
% --------------------------------------------------------------------------- 80
This test case has been considered already in \Srm\ref{ssec:example_order_loss} to introduce the order reduction phenomenon. Here we show that: (i)~MBC recover full convergence order for function values as well as some derivatives; (ii)~the full order for derivatives is only recovered when the MBC are carried out to the appropriate order; and (iii)~DIRK schemes with high WSO recover the full order for function values, but generally not for derivatives. Consider the 1D heat equation
\begin{equation}\label{eq:heat}
	u_t = u_{xx} + f\quad\mbox{for}\quad (x,t)\in(0,1)\times(0,1],
\end{equation}
with solution $u^{*}(x,t) = \cos(15t)\sin(5x+5)$, and forcing $f$, Dirichlet b.c.~$u = g$ on $\{0,1\}\times(0,1]$, and initial condition $u = u_0$ chosen accordingly.

% --------------------------------------------------------------------------- 80
\begin{figure}
	\hfillL
	\includegraphics[width=0.32\textwidth]{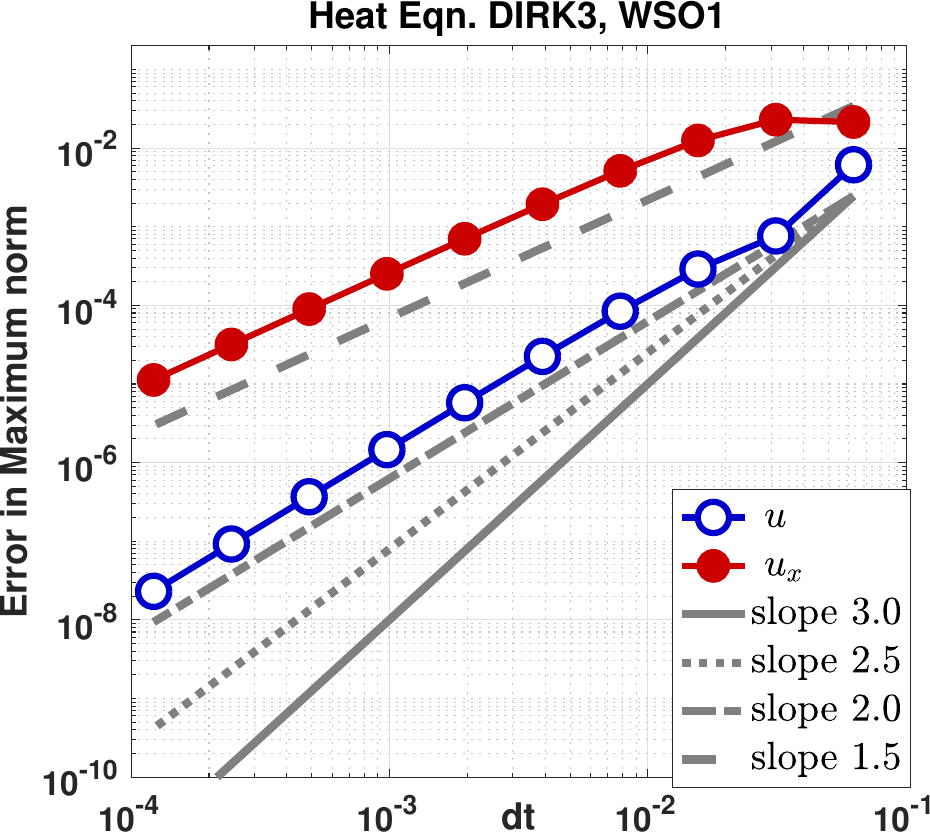}
	\hfill
	\includegraphics[width=0.32\textwidth]{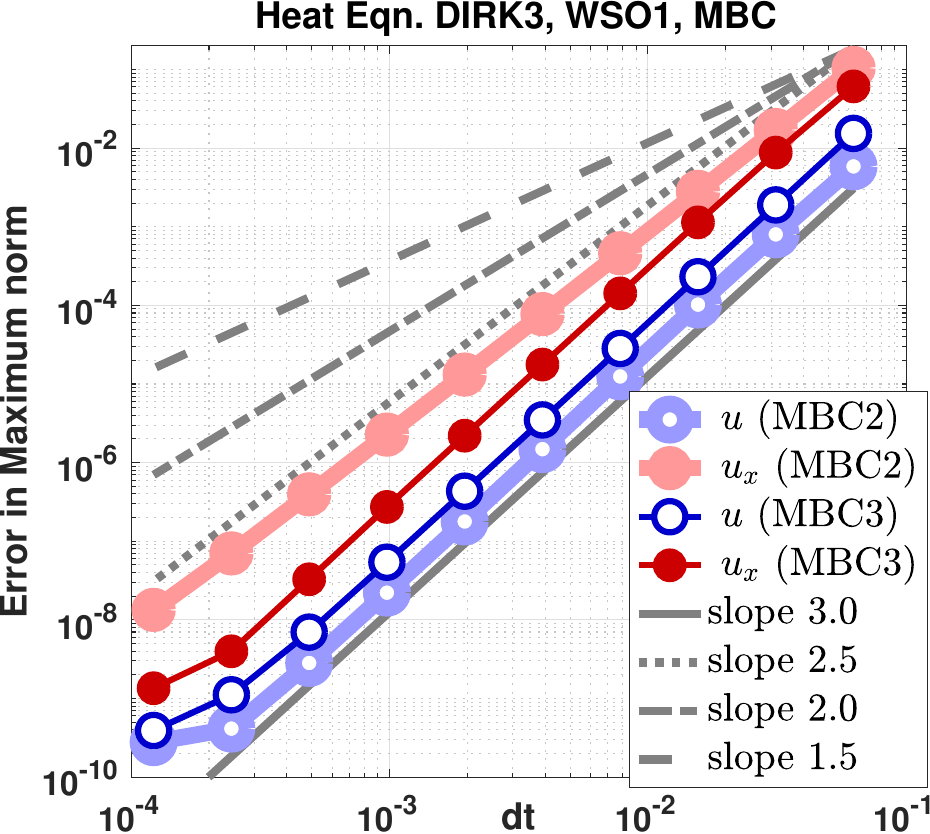}
	\hfill
	\includegraphics[width=0.32\textwidth]{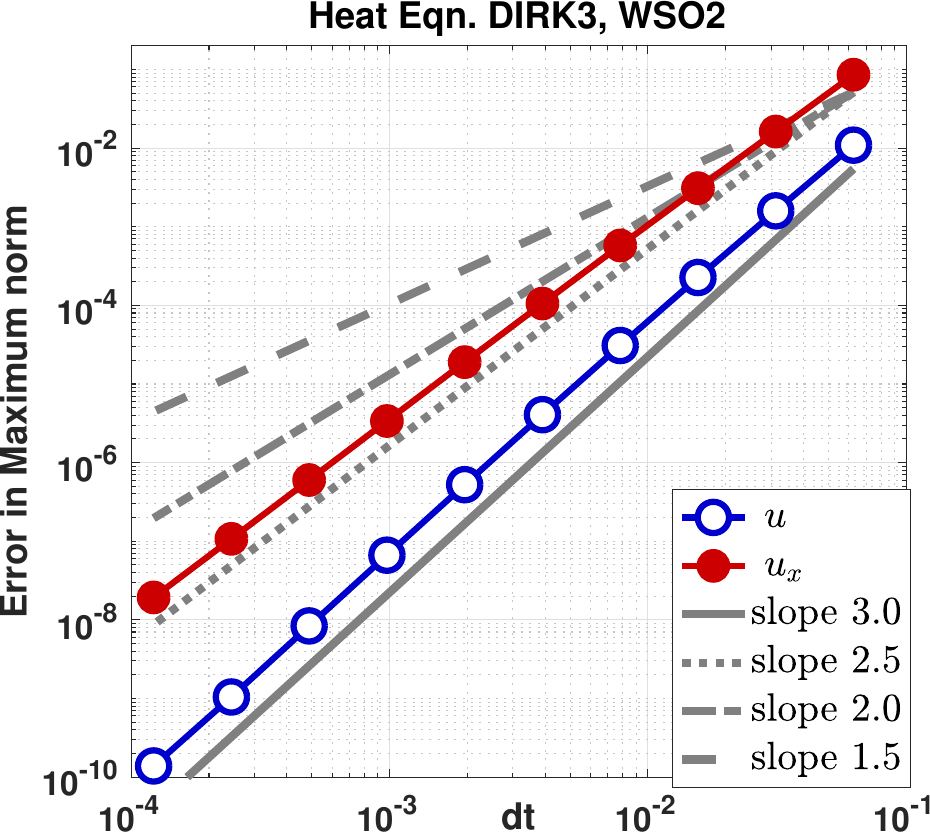}
	\hfillR
	\caption{Error convergence ($u$ blue; $u_x$ red) for 3rd order schemes for the heat equation \eqref{eq:heat}. WSO~1 scheme with conventional b.c.~(left); same scheme with MBC (middle); WSO~2 scheme (right).}
	\label{fig:errcvg_heat_dirk3}
\end{figure}
% --------------------------------------------------------------------------- 80

Figure~\ref{fig:errcvg_heat_dirk3} shows the convergence orders of $u$ and $u_x$ for the WSO~1 scheme with conventional b.c.~(left), for the same scheme with MBC (middle), and for the WSO~2 scheme with conventional b.c.~(right). In the middle panel, MBC are carried out up to the 2nd (MBC2) and 3rd (MBC3) order terms, respectively. Order reduction renders $u = O(\dt^2)$, $u_x = O(\dt^{1.5})$, and $u_{xx} = O(\dt^1)$ for the WSO~1 scheme. For the same scheme, the full MBC3 recover $u = O(\dt^3)$, $u_x = O(\dt^3)$, and $u_{xx} = O(\dt^3)$, while the MBC2 recover $u = O(\dt^3)$, but yield reduced orders in $u_x = O(\dt^{2.5})$, and $u_{xx} = O(\dt^2)$. The same orders are obtained with the WSO~2 scheme (with conventional b.c.). Note that the errors in $u_{xx}$ are not displayed in the figure, but the convergence orders are equally clear as for $u$ and $u_x$.

% --------------------------------------------------------------------------- 80
\subsection{Schr\"odinger Equation}\label{ssec:NumExamples_Schroedinger}
% --------------------------------------------------------------------------- 80
As an example of a dispersive problem (which fails assumption~(\ref{eq:AssumptionsLS}a)), we consider the Schr\"o\-dinger equation
\begin{equation}\label{eq:schroedinger}
u_t = \frac{\imath \omega}{k^2} u_{xx} \quad\mbox{for}\quad (x,t)\in(0,1)\times(0,1.2],
\end{equation}
with $k=5$ and $\omega = 2\pi$, solution $u^{*}(x,t) = \exp(\imath (kx-\omega t))$, Dirichlet b.c.~$u = g$ on $\{0,1\}\times(0,1.2]$, and initial condition $u = u_0$ chosen accordingly.

Figure~\ref{fig:errshape_schro_dirk3} shows that in addition to a time-periodic error with BLs, the RK scheme produces transient dispersive waves in the error far from the domain boundaries. And even more: these dispersive waves may produce order-reduction-like effects in the interior of the domain. The total RK error can be understood as a superposition of the time-periodic error (having $O(\dt^2)$ BLs and $O(\dt^3)$ error away from the BLs outlined in \Srm\ref{sec:orderloss_gte}), and a transient dispersive wave that solves the RK scheme applied to the homogeneous equation \eqref{eq:schroedinger} (i.e., $f=0$ and $g=0$).

% --------------------------------------------------------------------------- 80
\begin{figure}
	\hfillL
	\includegraphics[width=0.32\textwidth]
	{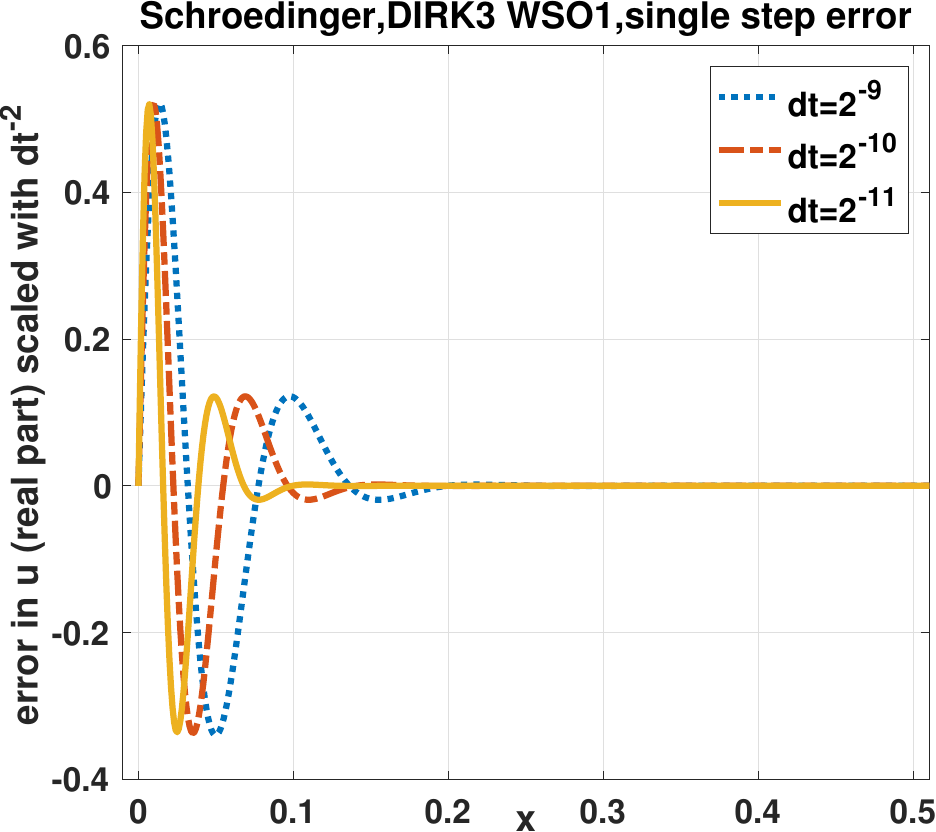}
	\hfill
	\includegraphics[width=0.32\textwidth]
	{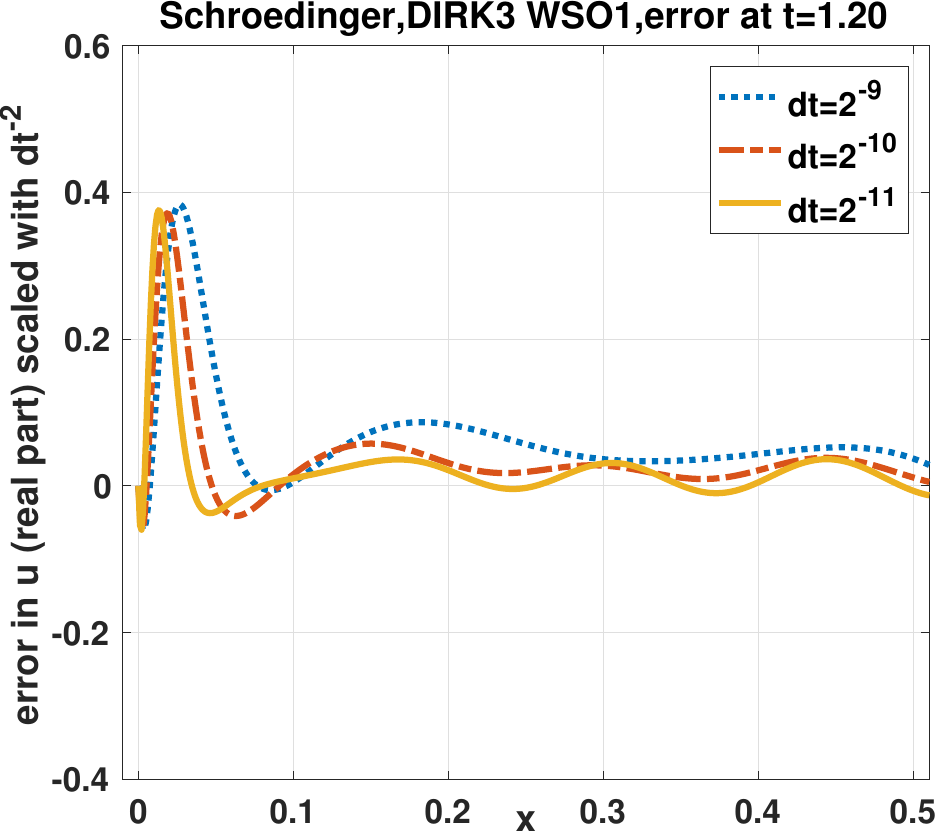}
	\hfill
	\includegraphics[width=0.32\textwidth]
	{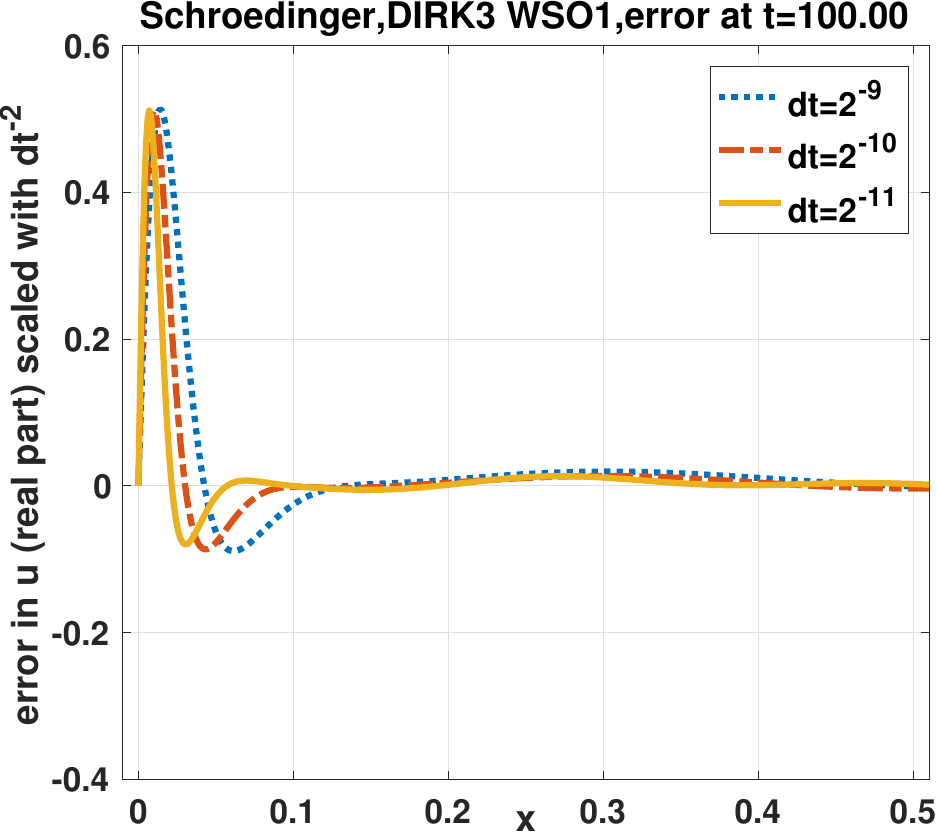}
	\hfillR
	\caption{Errors (real part, scaled with $1/\dt^2$) as functions of $x$ for the Schr\"odinger equation \eqref{eq:schroedinger}, solved with a WSO~1 scheme with conventional b.c., after a single step (left), at a transient time (middle), and at a large time (right). Shown is the left half of the domain (the right half looks similar). The transient error component away from the BLs is clearly visible.}
	\label{fig:errshape_schro_dirk3}
\end{figure}
% --------------------------------------------------------------------------- 80

% --------------------------------------------------------------------------- 80
\begin{figure}
	\hfillL
	\includegraphics[width=0.32\textwidth]{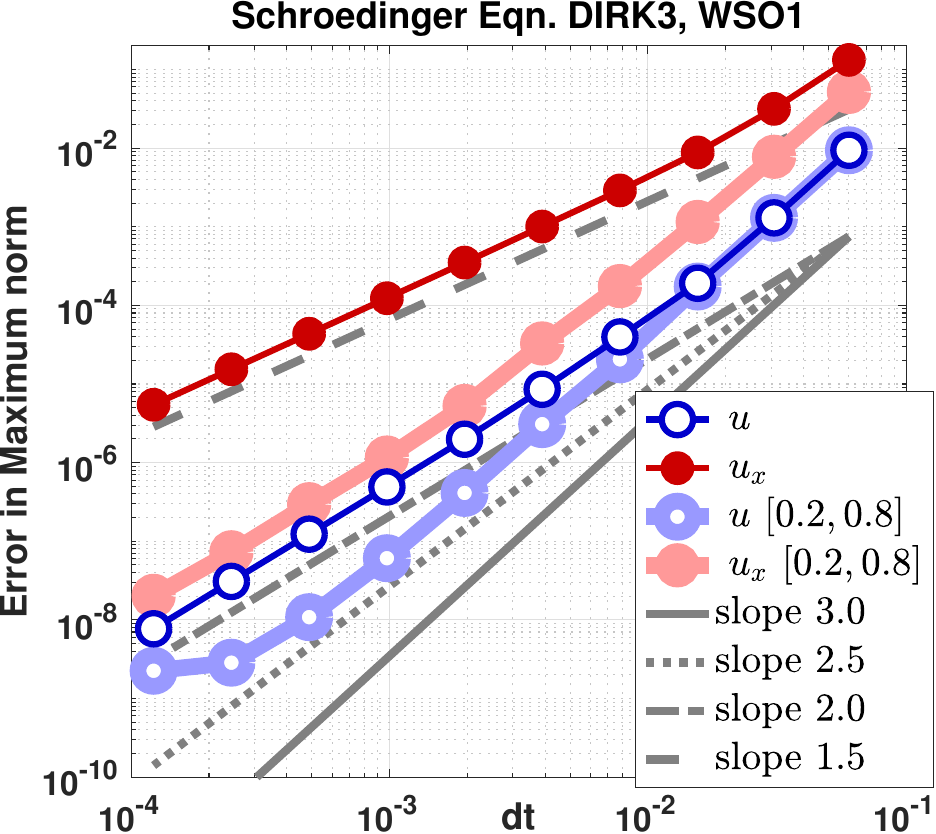}
	\hfill
	\includegraphics[width=0.32\textwidth]{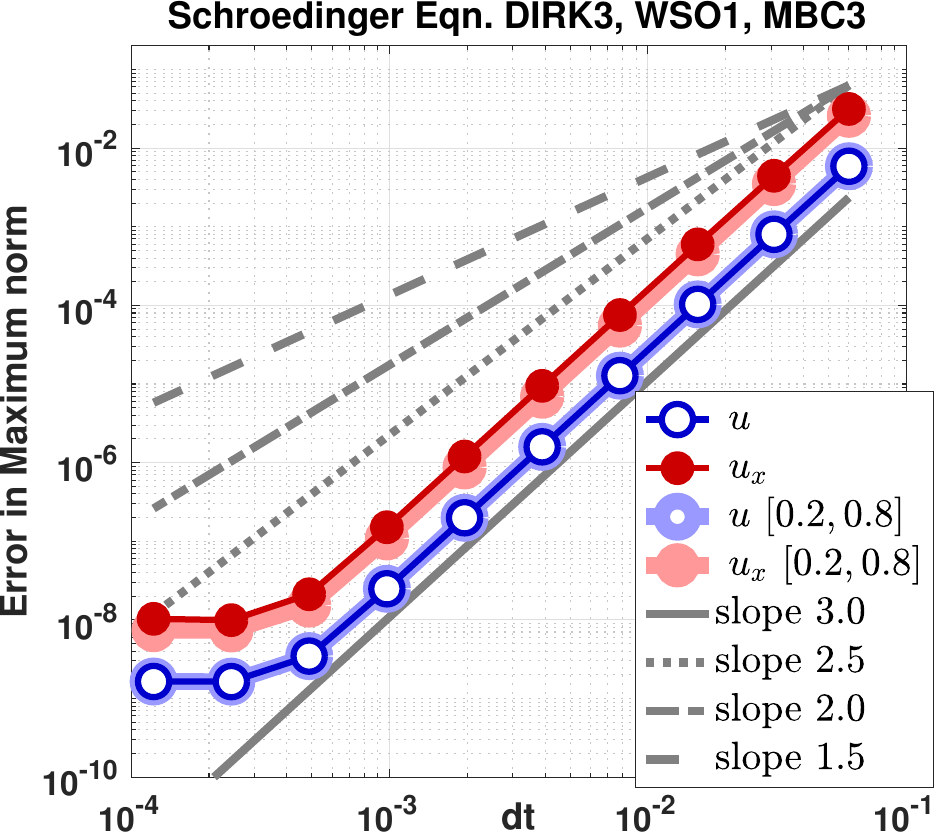}
	\hfill
	\includegraphics[width=0.32\textwidth]{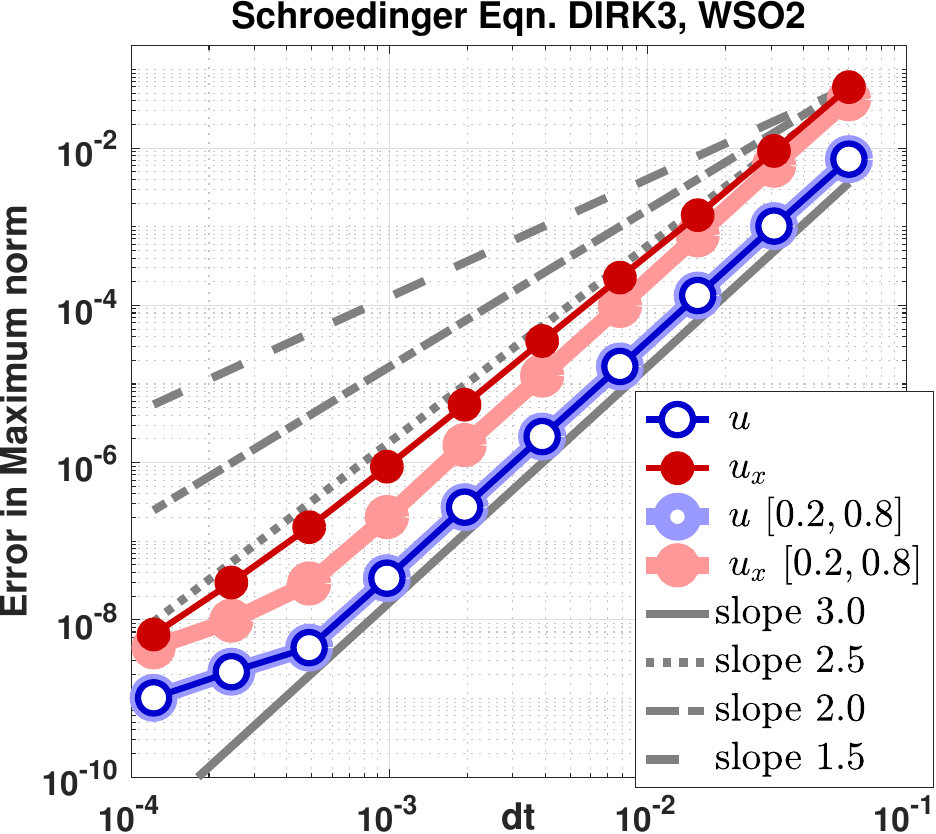}
	\hfillR
	\caption{Error convergence ($u$ blue; $u_x$ red) for 3rd order
		schemes for the Schr\"odinger equation \eqref{eq:schroedinger}.
		WSO~1 scheme with conventional b.c.~(left); same scheme with MBC (middle); WSO~2 scheme (right).
		The error convergence measured away from the BLs is shown for $u$ (light blue) and $u_x$ (light red).}
	\label{fig:errcvg_schro_dirk3}
\end{figure}
% --------------------------------------------------------------------------- 80

Figure~\ref{fig:errshape_schro_dirk3} shows the shape (in $x$, only half of the domain is shown) of the error (here re-scaled with $1/\dt^2$) at three different times: after one step (left panel), at a transient time (middle), and after a long time (right). Except for the $O(\dt)$ time, the BL is dominated by the time-periodic component, while the domain's interior is dominated by the transient component. One can clearly see that the transient component decays slowly in time (because the RK scheme is asymptotically stable for any imaginary eigenvalue). However, for transient times (middle panel), it yields a noticeable contribution to the error away from the BLs. The plots indicate that the transient component (a) scales (roughly) like $O(\dt^{2.5})$ in amplitude, and (b) has an $O(\dt^{0.5})$ wave length. This observed scaling occurs because the transient component has an i.c.\ with BLs of width $O(\dt^{0.5})$, and thus its dominant Fourier modes occur at wave numbers $O(\dt^{-0.5})$ and with magnitude $O(\dt^{0.5})$.

Figure~\ref{fig:errcvg_schro_dirk3} shows the error convergence results. When errors are considered over the full spatial domain (i.e., including the BLs), precisely the same results as for the heat equation (see Figure~\ref{fig:errcvg_heat_dirk3}) are obtained. In addition, we consider errors evaluated away from the BLs (light colors). As expected from the results above, these exhibit a more interesting behavior. Without any remedies to order reduction, we observe (roughly) an error scaling of $u = O(\dt^{2.5})$, $u_x = O(\dt^2)$, and $u_{xx} = O(\dt^{1.5})$, thus indicating order reduction effects away from the BLs. In addition, MBC and high WSO remove the order reduction, not only in the BLs, but also inside the domain. It should be re-iterated that the transient effects vanish after sufficiently long times (which, for most RK schemes, are $O(\dt)$, with a very large constant).

The observations collected here highlight that order reduction effects need not necessarily be limited to thin zones (i.e., BLs) near the domain boundaries, but can propagate into the interior of the domain, if for instance the PDE is a dispersive wave equation.

% --------------------------------------------------------------------------- 80
\subsection{Advection-Diffusion Equation}
\label{ssec:NumExamples_AdvectionDiffusion}
% --------------------------------------------------------------------------- 80
As revealed by the analysis in \Srm\ref{sec:orderloss_gte}, order reduction for IBVPs is intricately linked to boundary layers (BLs) produced by the time-stepping. A natural question is therefore: what happens in problems that possess a physical BL? To answer this question, we consider the linear advection-diffusion equation
\begin{equation}\label{eq:advdiff}
	u_t = \nu\,u_{xx} - u_x + f\quad\mbox{for}\quad(x,t)\in(0\/,\,1)\times(0,\,1.2]\/,
\end{equation}
with manufactured solution $u^*(x\/,\,t) = \sin(2\,\pi\,(x-t))\/$, and the nondimensional viscosity $\nu = 10^{-3}$. When $\nu$ is small, the equation becomes advection-dominated, and prescribing Dirichlet b.c.\ at the outflow boundary $x=1$ results in a BL of width $O(\nu)$ in the error (note that even though our $u^*$ does not have a BL, the error does).

% --------------------------------------------------------------------------- 80
\begin{figure}
\includegraphics[width=0.32\textwidth]{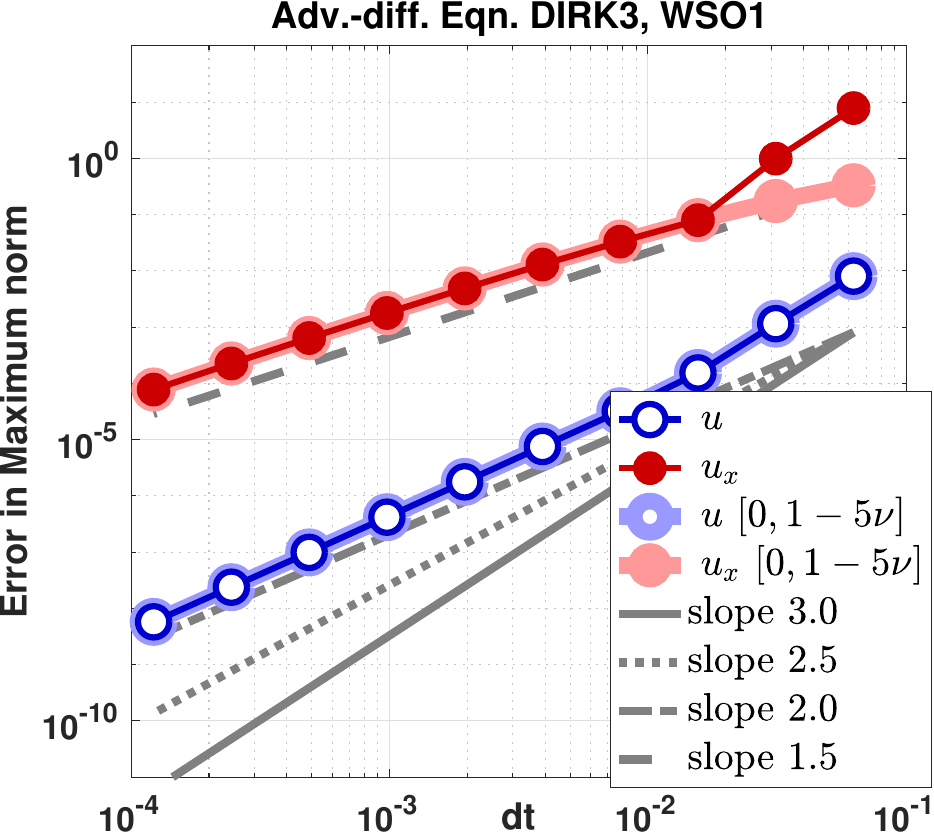}
\includegraphics[width=0.32\textwidth]{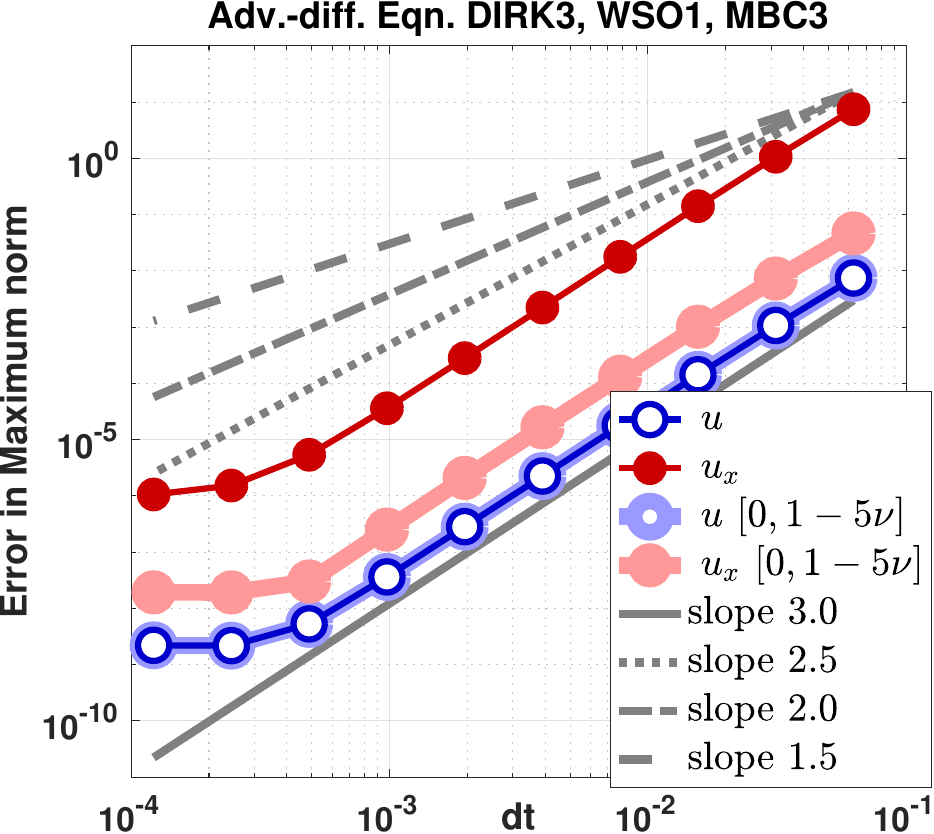}
\includegraphics[width=0.32\textwidth]{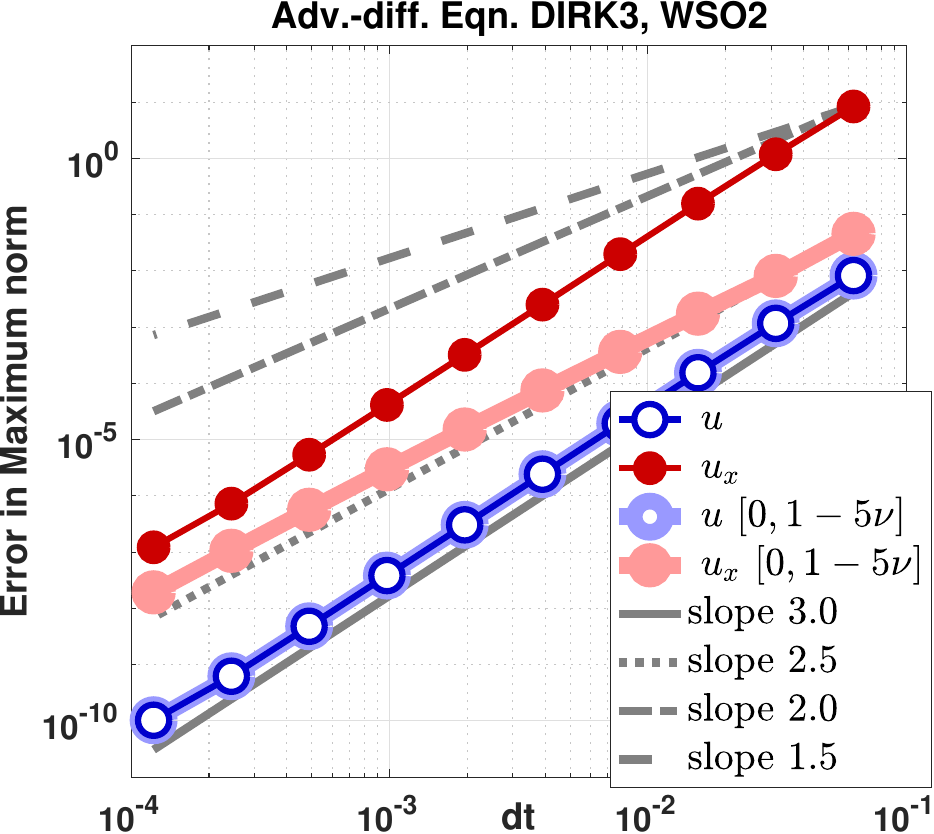}
\caption{Error convergence ($u$ blue; $u_x$ red) for 3rd order schemes for the advection-diffusion equation \eqref{eq:advdiff}. WSO~1 scheme with conventional b.c.~(left); same scheme with MBC (middle); WSO~2 scheme (right). The error measured away from the outflow boundary ($[0,1-5\nu]$) is shown for $u$ (light blue) and $u_x$ (light red).}
\label{fig:errconvg_advdiff_dirk3}
\end{figure}
% --------------------------------------------------------------------------- 80

Figure~\ref{fig:errconvg_advdiff_dirk3} show the convergence results. In addition to measuring errors (in the maximum norm) over the whole domain, we also study the error over the domain $x\in [0,1-5\nu]$, i.e., away from the BL (light colors). The results show L-shaped transitions in error behavior, depending on which types of BLs dominate. The $\nu$-BL is of magnitude $O(\dt^3)$ and width $O(\nu)$, thus its effect on the $u_x$ error is $O(\dt^3/\nu)$. In turn, the order reduction BLs (for the WSO~1 scheme) are of magnitude $O(\dt^2)$ and of width $O(\dt^{0.5})$, thus affecting the $u_x$ error with $O(\dt^{1.5})$. Balancing these expressions explains why the kink in the $u_x$ error (left panel) occurs at $\dt = O(\nu^{2/3})$. Likewise, the same error experiences a kink at $\dt = O(\nu^2)$ for the WSO~2 scheme (right panel).

Thus, for large $\dt$ values the $\nu$-BL dominates the error, and the scheme appears to not exhibit order reduction. However, for $\dt$ sufficiently small, the error behaves the same way it does for the heat equation (\Srm\ref{ssec:NumExamples_HeatEqn}), and order reduction becomes apparent. In line with our theory, the L-shapes in the errors are removed when MBC3 (middle panel) are applied. Those recover the full 3rd order convergence throughout the full range of $\dt$ values. The WSO~2 scheme (right panel) recovers a clean third order for $u$. However, it still loses an order in $u_x$, even though this becomes visible only for very small $\dt$ values.

% --------------------------------------------------------------------------- 80
\subsection{Viscous Burgers' Equation}
\label{ssec:NumExamples_VicousBurgers}
% --------------------------------------------------------------------------- 80
With this example we demonstrate that order reduction, as well as some remedies, also apply in nonlinear problems. We consider the viscous Burgers' equation
\begin{equation}\label{eq:visburgers}
	u_t + u\,u_x = \nu\,u_{xx} + f\quad\mbox{for}\quad(x,t)\in(0\/,\,1)\times(0,1]\/,
\end{equation}
with Dirichlet b.c.\ and $\nu=0.1$. The manufactured solution is $u^*(x,t) = \cos(2+10t)\sin(0.2+20x)$. The nonlinearity yields a nonlinear implicit equation at each stage, with $\mathcal{N} u = \nu u_{xx} - u u_x$, which is solved via standard Newton iteration.

A crucial limitation is that the third order term in the MBC, obtained from the expansion in \Srm\ref{sec:MBC}, contains terms that are not accessible without knowledge of the exact solution (see \Srm\ref{ssec:MBC_Limitations}). Hence, MBC3 cannot be formulated via the procedure introduced in \Srm\ref{sec:MBC}. However, MBC2 \emph{can} be formulated in terms of the data, and they coincide with the corresponding expression obtained for linear problems.

% --------------------------------------------------------------------------- 80
\begin{figure}
	\hfillL
	\includegraphics[width=0.32\textwidth]{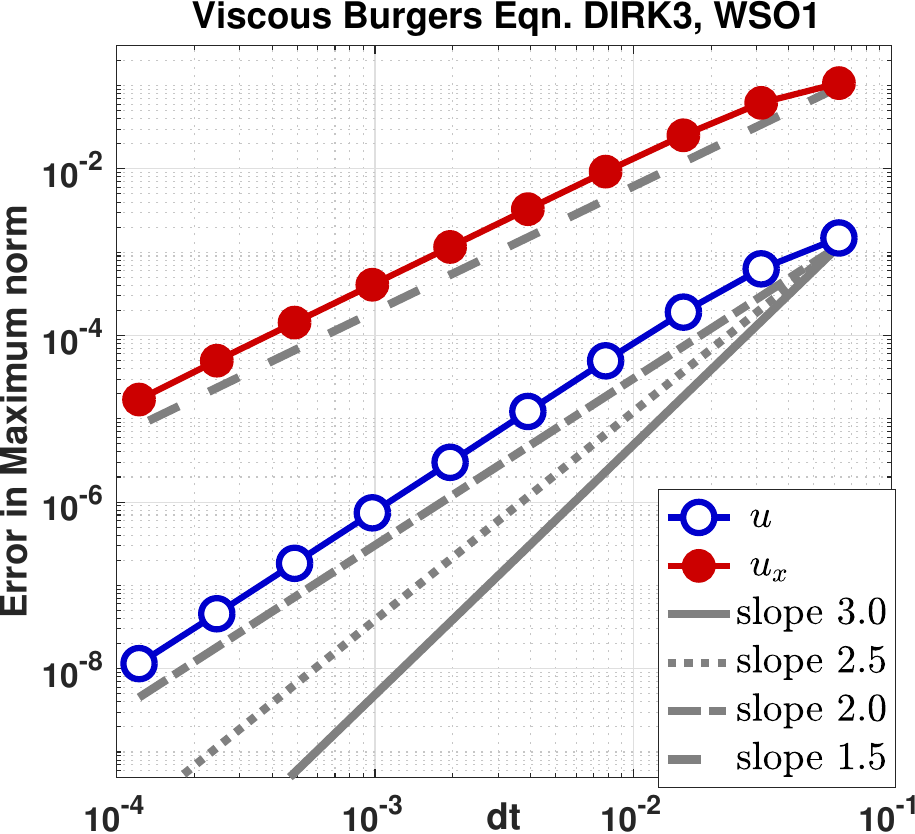}
	\hfill
	\includegraphics[width=0.32\textwidth]{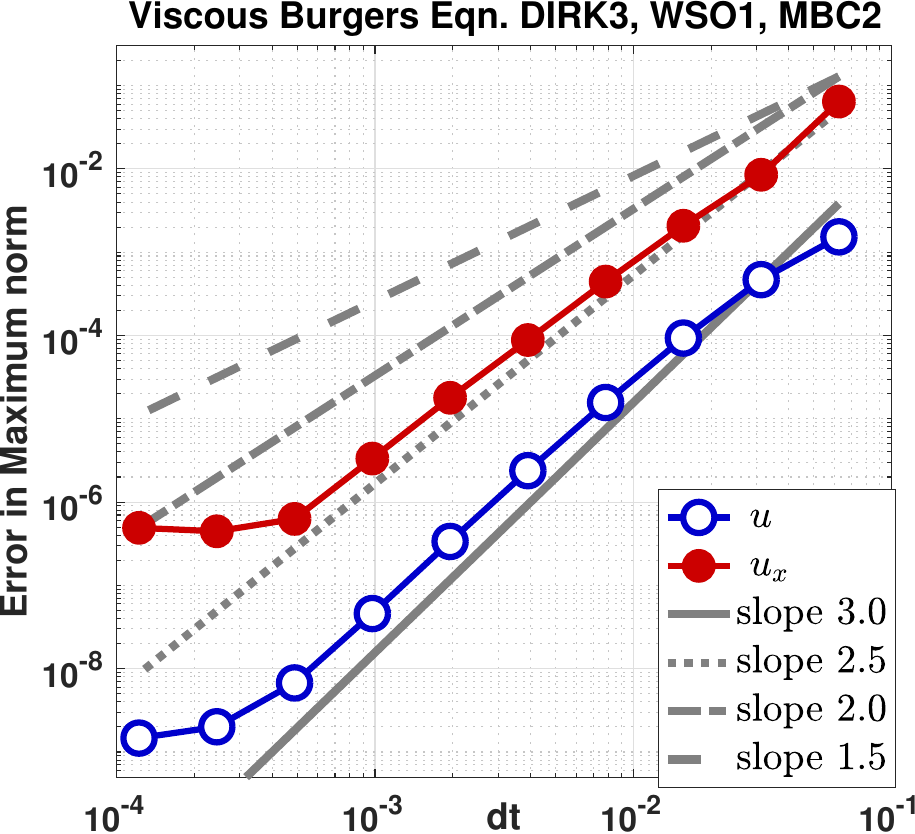}
	\hfill
	\includegraphics[width=0.32\textwidth]{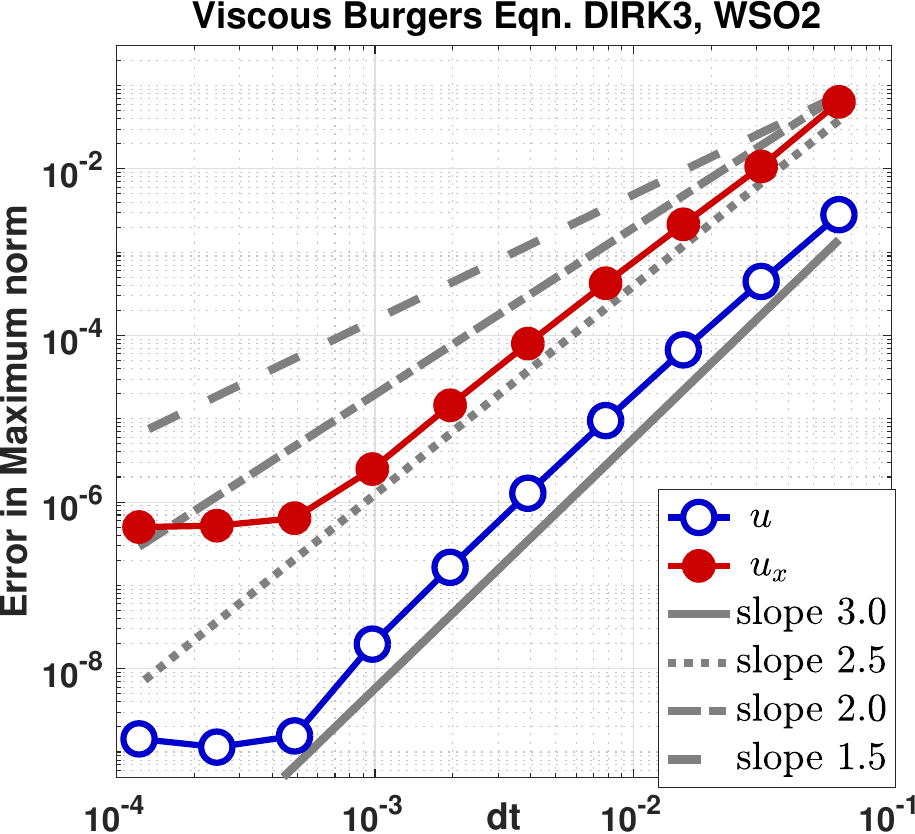}
	\hfillR
	\caption{Error convergence ($u$ blue; $u_x$ red) for 3rd order
		schemes for the viscous Burgers' equation \eqref{eq:visburgers}.
		WSO~1 scheme with conventional b.c.~(left); same scheme with MBC2 (middle); WSO~2 scheme (right).
		}
	\label{fig:errconvg_visburgers_dirk3}
\end{figure}
% --------------------------------------------------------------------------- 80

Figure~\ref{fig:errconvg_visburgers_dirk3} shows that the same type of order reduction arises as for linear problems (left panel). The MBC2 recover the full 3rd order for $u$, but lose orders for derivatives (middle). The same results are obtained with the WSO~2 scheme (right).

% --------------------------------------------------------------------------- 80
\subsection{Linear Advection Equation and Airy's Equation}
\label{ssec:NumExamples_Airy}
% --------------------------------------------------------------------------- 80
All examples above have $\diffop$ a second-order differential operator. To demonstrate that the order reduction results, as well as the remedies, apply more generally, we consider the following two problems:
\begin{enumerate}[ (1)]
	\item The linear advection equation: $u_t = u_x $ for $(x,t)\in(0,1)\times(0,1.2]$
	with Dirichlet b.c.\ at $x=0$ and manufactured solution
	$u^*(x,t) = \sin(2\pi(x-t))$.
	\item Airy's equation: $u_t = u_{xxx} + f$ for $(x,t)\in(0,1)\times(0,1]$
	with b.c.\ $u(0) = g(t)$, $u_x(0) = h_0(t)$, $u_x(1) = h_1(t)$, and manufactured
	solution $u^*(x,t) =\cos(15t)$.
\end{enumerate}
The respective results are shown in Figures~\ref{fig:errconvg_adv_dirk3} and~\ref{fig:errconvg_airy_dirk3}. In line with the theory in \Srm\ref{sec:orderloss_gte}, conventional b.c.\ render function values one order more accurate than the scheme's WSO, and $1/m$ orders per derivative are lost, where $m$ is the order of $\diffop$. MBC3 recover the full 3rd order for $u$, as well as derivatives up order $m$. Hence, one obtains $u = O(\dt^3)$, $u_x = O(\dt^3)$, and $u_{xx} = O(\dt^2)$ for $m=1$, and $u = O(\dt^3)$, $u_x = O(\dt^3)$, and $u_{xx} = O(\dt^3)$ for $m=3$. In contrast, the WSO~2 scheme recovers the full order in $u$ only. Hence $u = O(\dt^3)$, $u_x = O(\dt^2)$, and $u_{xx} = O(\dt^1)$ for $m=1$, and $u = O(\dt^3)$, $u_x = O(\dt^{2.67})$, and $u_{xx} = O(\dt^{2.33})$ for $m=3$.

% --------------------------------------------------------------------------- 80
\begin{figure}
	\hfillL
	\includegraphics[width=0.32\textwidth]{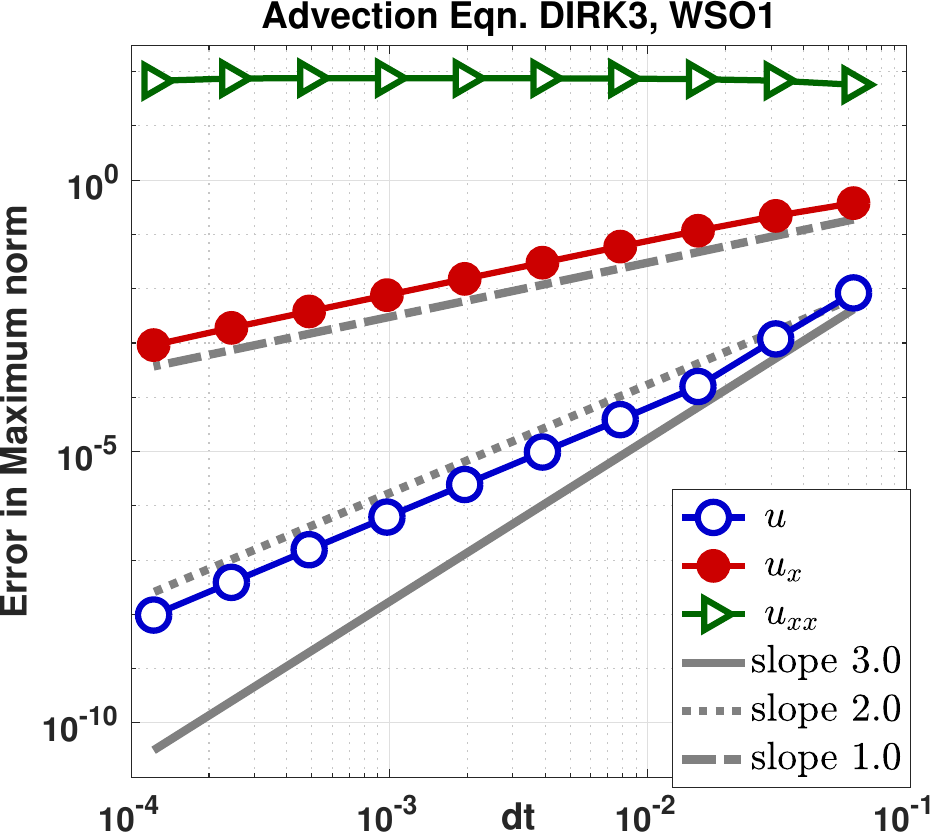}
	\hfill
	\includegraphics[width=0.32\textwidth]{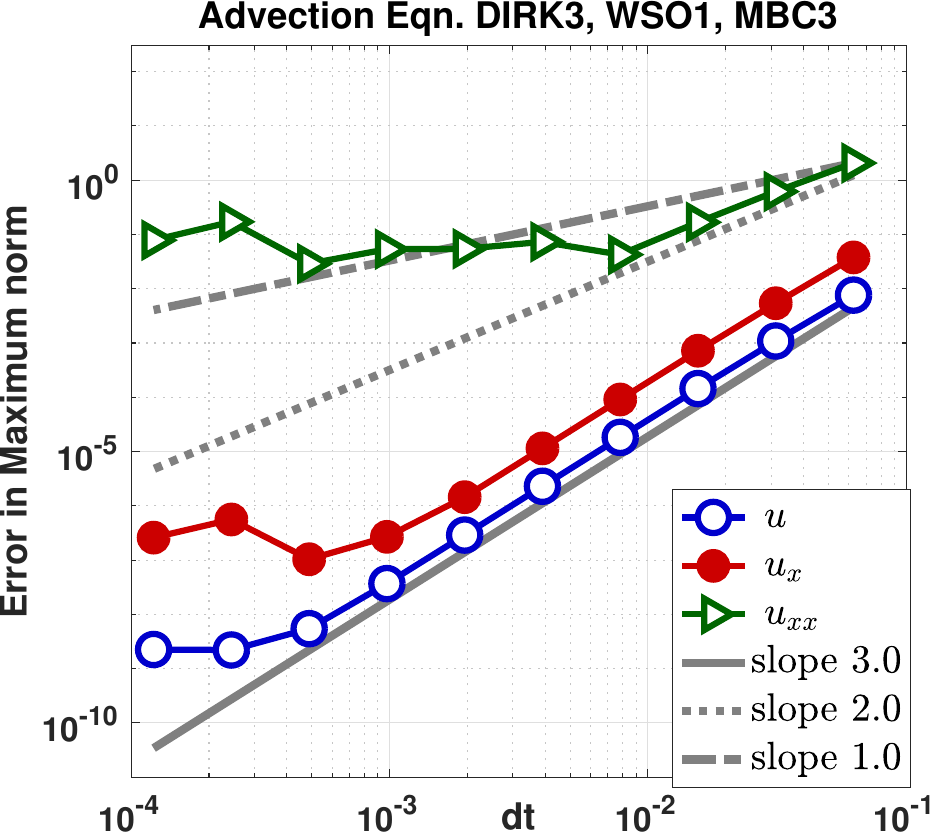}
	\hfill
	\includegraphics[width=0.32\textwidth]{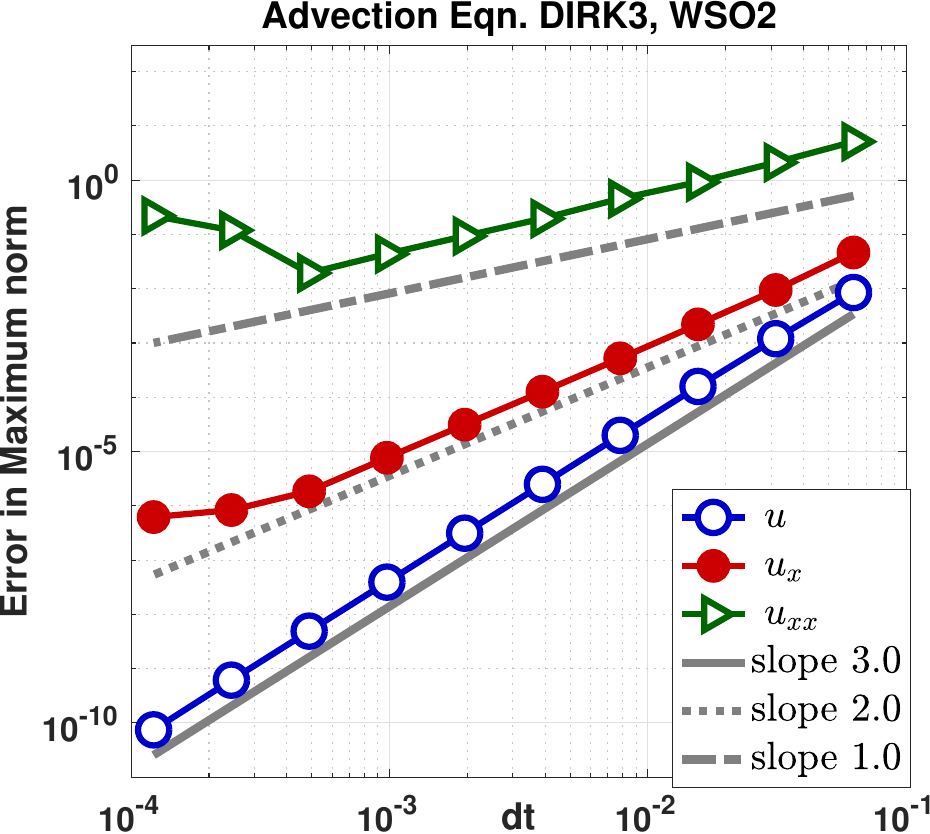}
	\hfillR
	\caption{Error convergence ($u$ blue; $u_x$ red; $u_{xx}$ green)
		for 3rd order schemes for the linear advection equation. WSO~1 scheme with
		conventional b.c.~(left); same scheme with MBC (middle); WSO~2 scheme (right).}	
	\label{fig:errconvg_adv_dirk3}
\end{figure}
% --------------------------------------------------------------------------- 80

% --------------------------------------------------------------------------- 80
\begin{figure}
	\hfillL
	\includegraphics[width=0.32\textwidth]{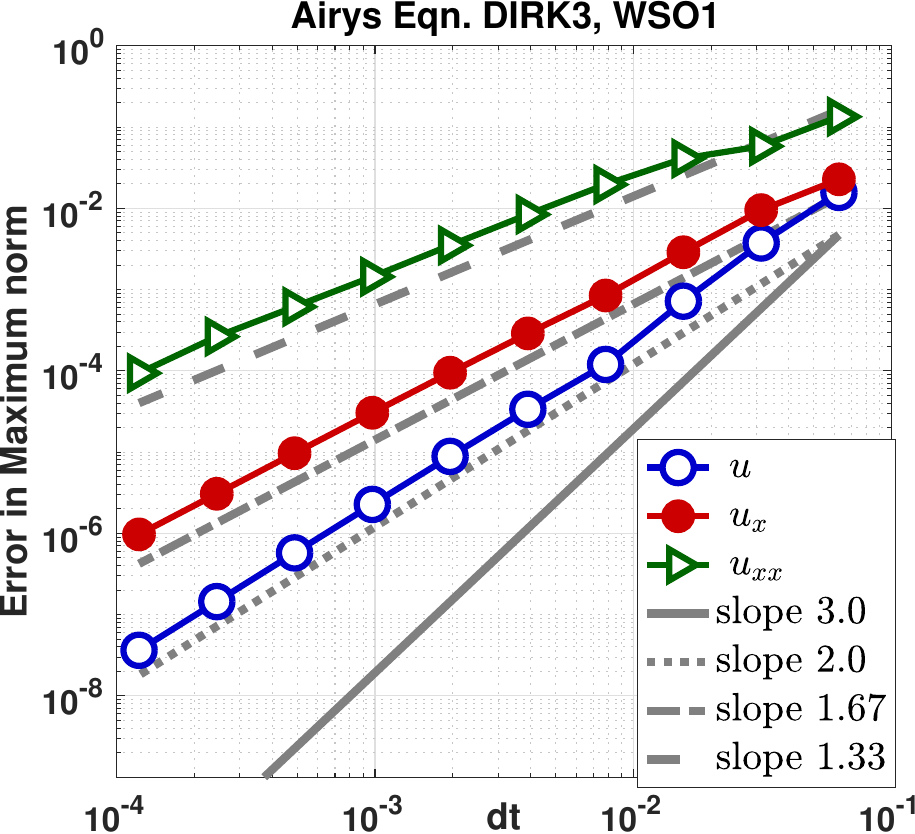}
	\hfill
	\includegraphics[width=0.32\textwidth]{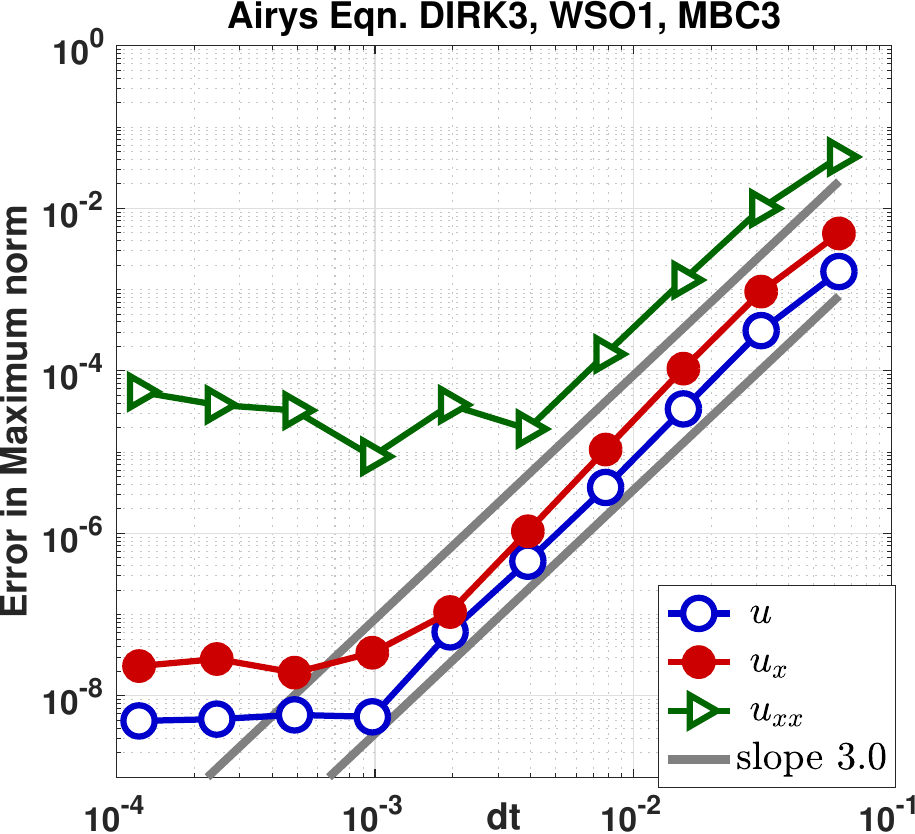}
	\hfill
	\includegraphics[width=0.32\textwidth]{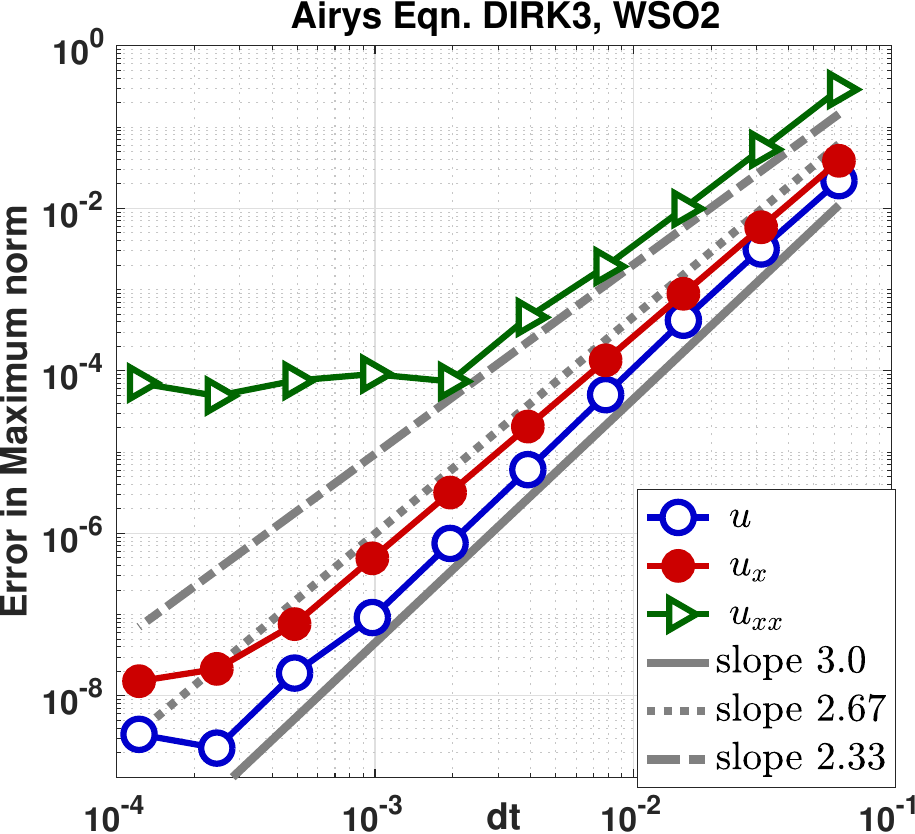}
	\hfillR
	\caption{Error convergence ($u$ blue; $u_x$ red; $u_{xx}$ green)
		for 3rd order schemes for the Airy's equation. WSO~1 scheme with
		conventional b.c.~(left); same scheme with MBC (middle); WSO~2 scheme (right).}
	\label{fig:errconvg_airy_dirk3}
\end{figure}

% --------------------------------------------------------------------------- 80
% --------------------------------------------------------------------------- 80
\section{Conclusions and Outlook}
\label{sec:outlook}
% --------------------------------------------------------------------------- 80
% --------------------------------------------------------------------------- 80
We have demonstrated that order reduction is a generic phenomenon in implicit RK time-stepping for IBVPs (with time-dependent data) that manifests in the formation of spatial boundary layers in the numerical solution. These originate because the stage update equations are singularly perturbed boundary value problems, where most types of b.c.\ generate a mismatch between the boundary and the interior. The global-in-time behavior of these boundary layers has been studied via modal and asymptotic analysis; and in that light, two different approaches to overcome order reduction have been examined: (a)~new conditions on the RK coefficients (weak stage order); and (b)~modified b.c.\ that render the boundary data mismatch as small as the order of the RK scheme. Modified b.c.\ have the advantage that they work for a wide range of RK schemes; however, they may be complicated to compute, and difficult to implement for non-linear problems. In turn, schemes with weak stage order require no modification of the spatial approximation; however, these new conditions rule out many existing RK methods.

Because order reduction is caused by boundary layers, one could be inclined to think that it is only a minor concern, because its effects can be ``felt'' only near the boundary. That is not the case, because:
\begin{enumerate}[(i)]
\item In many applications, the solution (and particularly its derivatives) at the boundary are important, such as stresses at boundaries (lift and drag in CFD).
\item In many multi-physics problems, information near the boundaries feeds back to the interior of the domain (non-local terms, fast waves, etc.).
\item The analysis in \Srm\ref{sec:orderloss_gte} only studies the $t\to\infty$ behavior of the error. Many problems (e.g., the Schr{\"o}dinger equation, see \Srm\ref{ssec:NumExamples_Schroedinger}) exhibit transient features that reduce the observed order away from the boundary at $O(1)$ times.
\item The boundary layers' thickness scales like some power of the time step: $\dt^{1/m}$, where $m$ is the order of the space operator. Hence, unless the time step is very small, their thickness may be considerable.
\end{enumerate}
In this context (particularly point (i)), it must be stressed that the presence of boundary layers that multistage methods almost always develop generally implies order reduction in (sufficiently high) spatial derivatives, even if the numerical solution itself is devoid of order reduction (due to weak stage order or modified b.c.).

Although the analysis in this paper focuses on problems with Dirichlet b.c., the order reduction phenomenon also arises with other types of b.c.\ (such as Neumann), and the analysis in \Srm\ref{ssec:orderresults} carries over with minor adaptations. With Neumann b.c., the obtained convergence orders are slightly different. For instance, for the heat equation, one obtains $O(\dt^{\tilde{q}+1.5})$, where $\tilde{q}$ is the weak stage order, for the error in function value (i.e., half an order better than with Dirichlet conditions), and half an order loss per derivative \cite{LubichOstermann1995quasilinear}.

The analysis in \Srm\ref{sec:orderloss_gte} also applies to RK schemes with a singular coefficient matrix $A$, such as Crank-Nicolson (CN) and EDIRK schemes \cite{KvaernoNorsettOwren1996}. The CN scheme is an example of a second-order scheme devoid of boundary layers, because the matrix $M$ has no small eigenvalues (one is $O(1)$ and the other is zero). However, CN is not $L$-stable, and it incurs the same problem as the implicit midpoint rule (see \Srm\ref{ssec:eigenmodeanalysis}), namely the growth factor approaches $-1$ as $\zeta\to-\infty$.

It should be stressed that order reduction arises in explicit RK schemes as well \cite{CarpenterBottliebAbarbanelDon1995, SanzVerwerHundsdorfer1986}. However, the semi-discrete analysis in this paper does not directly apply.

Finally, the weak stage order and the modal analysis presented in this paper apply beyond IBVPs and RK schemes. In \Srm\ref{ssec:stiff_ode} we briefly outline the role of weak stage order in avoiding order reduction in stiff ODEs, and in \Srm\ref{ssec:LMM_NoOR} we employ the modal analysis to show that linear multistep methods (LMMs) do \emph{not} exhibit order reduction.

% --------------------------------------------------------------------------- 80
\subsection{Order Reduction and Weak Stage Order for Stiff Linear ODEs}
\label{ssec:stiff_ode}
% --------------------------------------------------------------------------- 80
The concept of weak stage order, introduced in \Srm\ref{sec:WeakStageOrder}, has been studied in terms of its impact on boundary layers in IBVPs. Because the concept is also of interest to stiff ODEs (without a spatial interpretation), we examine the model ODE proposed by Prothero and Robinson \cite{ProtheroRobinson1974}:
\begin{equation}\label{Stiff_ODE}
	y' = \lambda (y-\phi(t)) + \phi'(t)
\end{equation}
with i.c.\ $y(0)=\phi_0$ and $\mbox{Re}\,\lambda< 0$. The exact solution $y(t) = \phi(t)$ is assumed analytic.

When a RK scheme (with coefficients $A$, $\vecipower{b}{T}$, and $\vecc$) is applied to \eqref{Stiff_ODE}, the error $\errl^{n+1}$, at time $t_{n+1}$, can be computed (see \cite[Chapter IV.15]{WannerHairer1991}) to be
\begin{equation}\label{TruncationErr}
	\errl^{n+1} = R(\zeta)\,\errl^{n} + \zeta\vecipower{b}{T}
	(I-\zeta\,A)^{-1}\vecipower{\lte}{n+1} + \ltel^{n+1},
\end{equation}
where $\zeta = \lambda\dt$. The vector $\vecipower{\lte}{n+1}$ and scalar $\ltel^{n+1}$ denote the truncation errors incurred at the intermediate stages, and at the end of the time step, respectively. Written in terms of derivatives of $\phi$, and the stage order residuals $\sov{j}$, they read as
\begin{equation*}
	\vecipower{\lte}{n+1}
	= \!\!\sum_{j\geq q+1}\frac{\dt\,^j}{(j-1)!}\,\sov{j}\phi^{(j)}(t_n),\;\;
	\ltel^{n+1} = \!\!\sum_{j\geq p+1}\frac{\dt\,^j}{(j-1)!}
	\left(\vecipower{b}{T}\vecipower{c}{j-1}-\frac{1}{j}\right)\phi^{(j)}(t_n).
\end{equation*}
Using these expressions, we obtain the RK schemes's stiff ODE order as follows:
% --------------------------------------------------------------------------- 80
\begin{proposition}\label{WeakStageOrder}
Suppose the Runge-Kutta scheme ($A$, $\veci{b}$, $\vecc$), with $A$ invertible, has weak stage order $\tilde{q}$. Then in the stiff limit $\zeta \ll -1$, the local truncation error $\errl^{1}$ for equation \eqref{Stiff_ODE}, which is obtained by setting $\errl^0 = 0$, is of order $\min\{\tilde{q}+1,p+1\}$.
\end{proposition}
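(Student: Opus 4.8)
The plan is to specialize the one-step error identity \eqref{TruncationErr} to $\errl^0 = 0$, which collapses it to
\begin{equation*}
\errl^1 = \zeta\,\vecipower{b}{T}(I-\zeta A)^{-1}\vecipower{\lte}{1} + \ltel^1.
\end{equation*}
The last-stage contribution $\ltel^1$ is immediately $O(\dt\,^{p+1})$, since the series defining $\ltel^{n+1}$ begins at $j = p+1$. The whole argument therefore hinges on the first term, which propagates the stage truncation error $\vecipower{\lte}{1} = \sum_{j}\frac{\dt\,^j}{(j-1)!}\phi^{(j)}(t_0)\,\sov{j}$ through the resolvent $\zeta\,\vecipower{b}{T}(I-\zeta A)^{-1}$. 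I would substitute this series and analyze the scalar coefficients $g_j(\zeta) := \zeta\,\vecipower{b}{T}(I-\zeta A)^{-1}\sov{j}$ term by term.

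The key step is the weak stage order cancellation. As already recorded in Remark~\ref{rem:OstermannRoche}, WSO $\soq$ yields $\vecipower{b}{T}(I-zA)^{-1}\sov{j} = 0$ for all $1\leq j\leq\soq$, because the invariant subspace $\mathcal{V}$ of Definition~\ref{def:WeakStageOrder} is preserved by $(I-zA)^{-1}$ and lies in $\vec{b}^\perp$. Hence $g_j(\zeta)\equiv 0$ for $1\leq j\leq\soq$, and the first surviving term in the series is $j = \soq+1$, carrying the prefactor $\frac{\dt\,^{\soq+1}}{\soq!}\phi^{(\soq+1)}(t_0) = O(\dt\,^{\soq+1})$, while the higher-index terms contribute still higher powers of $\dt$.

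It then remains to verify that the surviving coefficients $g_j(\zeta)$ stay bounded in the stiff limit. Using that $A$ is invertible, I would factor $I-\zeta A = -\zeta A\,(I - \tfrac{1}{\zeta}A^{-1})$ to obtain $\zeta(I-\zeta A)^{-1} = -(I-\tfrac{1}{\zeta}A^{-1})^{-1}A^{-1} \to -A^{-1}$ as $\zeta\to-\infty$, so that $g_j(\zeta) = -\vecipower{b}{T}A^{-1}\sov{j} + O(1/\zeta)$ is $O(1)$. Combining, the first term of $\errl^1$ is $O(\dt\,^{\soq+1})$ and, together with $\ltel^1 = O(\dt\,^{p+1})$, gives $\errl^1 = O(\dt\,^{\min\{\soq+1,\,p+1\}})$.

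The step I expect to be the main obstacle is establishing the $O(1)$ bound on $g_j(\zeta)$ \emph{uniformly} over the stiff region, rather than merely along a single limiting ray. This follows from Assumption~(\ref{eq:AssumptionsLS}c): since the eigenvalues of $A$ have non-negative real part, $1/\zeta$ is never an eigenvalue of $A$ when $\mathrm{Re}(\zeta) < 0$, so $(I-\zeta A)^{-1}$ remains analytic and $g_j(\zeta)$ extends to a continuous function on the closed stiff sector with the finite limit $-\vecipower{b}{T}A^{-1}\sov{j}$; hence it is bounded there, which is what makes the order statement meaningful uniformly in the stiff parameter.
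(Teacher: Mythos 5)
Your proposal is correct and follows essentially the same route as the paper's proof: the identity \eqref{TruncationErr} with $\errl^0=0$, the vanishing of $\vecipower{b}{T}(I-\zeta A)^{-1}\sov{j}$ for $j\leq\soq$ via the $(I-\zeta A)^{-1}$-invariance of $\mathcal{V}\subset\vec{b}^{\perp}$, and a stiff-limit bound on $\zeta\vecipower{b}{T}(I-\zeta A)^{-1}\sov{j}$ in terms of $A^{-1}$. Your only addition is to spell out, via the factorization $\zeta(I-\zeta A)^{-1}=-(I-\tfrac{1}{\zeta}A^{-1})^{-1}A^{-1}$ and Assumption~(\ref{eq:AssumptionsLS}c), the uniform boundedness that the paper asserts in one line by invoking $\|A^{-1}\|$, $\|\sov{j}\|$, and an $O(1)$ constant.
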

% --------------------------------------------------------------------------- 80
\begin{proof}
Setting $\errl^0 = 0$ in \eqref{TruncationErr}, and substituting the formula for $\vecipower{\lte}{n+1}$, we have
\begin{equation}\label{WeakStageOrderOneStepError}
	\errl^1 = \sum_{j \geq q+1} \frac{\dt\,^j}{(j-1)!}\,
	\phi^{(j)}(t_0) \Big( \zeta\vecipower{b}{T}(I-\zeta\,A)^{-1} \,\sov{j}\Big)
	+ \ltel^{1}.
\end{equation}
Weak stage order $\tilde{q}$ means that the stage order residuals $\sov{j}$ for $1\leq j \leq \tilde{q}$ lie in an $A$-invariant space $\mathcal{V}$ that is orthogonal to $\veci{b}$. Thus $\mathcal{V}$ is also $(I-\zeta\,A)^{-1}$-invariant, and $(I-\zeta\,A)^{-1}\sov{j} \in \mathcal{V}$, hence
\begin{equation*}
	\vecipower{b}{T}(I-\zeta\,A)^{-1} \,\sov{j} = 0,
	\quad \text{for}\; 1\leq j \leq \tilde{q}.
\end{equation*}
As a result, the first $j \leq \tilde{q}$ terms in \eqref{WeakStageOrderOneStepError} vanish, resulting in the sum to be over $j \geq \tilde{q}+1$. In the stiff ODE limit, i.e., $\dt \ll 1$ and $\zeta \ll -1$, the term $\zeta\vecipower{b}{T}(I-\zeta\,A)^{-1} \sov{j}$ can be bounded in terms of $\|A^{-1}\|$, $\|\sov{j}\|$, and an $O(1)$ constant. Hence the expression for $\errl^1$ in \eqref{WeakStageOrderOneStepError} is $O(\dt\,^{\tilde{q}+1})$, while $\delta^{1} = O(\dt\,^{p+1})$, which together yields $\errl^1 = O(\dt\,^{\min\{\tilde{q}+1,p+1\}})$. 	
\end{proof}
% --------------------------------------------------------------------------- 80
\begin{remark}
By construction, weak stage order satisfies $\tilde{q}\ge q$. Hence Proposition~\ref{WeakStageOrder} improves the stiff error bound given by the stage order $q$.
\TheoremEnd
\end{remark}
% --------------------------------------------------------------------------- 80
\begin{remark}
For stiff ODEs, the global truncation error is of order $\tilde{q}$. This is a difference to PDE IBVPs, for which the global error is of order $\tilde{q}+1$. Hence, to avoid OR for stiff ODEs, the RK scheme must have $\tilde{q} = p$. In contrast, for IBVPs the choice $\tilde{q} = p-1$ suffices.
\TheoremEnd
\end{remark}
% --------------------------------------------------------------------------- 80

% --------------------------------------------------------------------------- 80
\subsection{Order Reduction in non-Runge-Kutta Time-Stepping Methods}
\label{ssec:LMM_NoOR}
% --------------------------------------------------------------------------- 80
This paper shows that order reduction for IBVPs, due to boundary layers in the spatial error, arises rather generically in Runge-Kutta methods. It also occurs in other multistage schemes, or any scheme that can be recast as a multistage scheme, see \S \ref{ssec:PriorResearch}. In contrast, schemes that achieve high order via a single BVP solve per step are devoid of the order reduction mechanism. This includes linear multistep methods (LMMs) (see \cite{Lubich1991} and \cite[Chapter IV.15]{HairerWanner1999} for LMM error estimates), for example backward differentiation formula (BDF) methods.

To illustrate that LMM do not result in a singular perturbation problem, we use the framework introduced in \Srm\ref{sec:orderloss_gte}. A general $s$-step, implicit, LMM for solving the IBVP \eqref{eq:IBVP} with Dirichlet b.c.\ takes the form
\begin{equation}\label{eq:LMM}
	u^{n+s} = \sum_{j=0}^{s-1}\alpha_j u^{n+j}
	+ \dt \sum_{j=0}^s \beta_j (\diffop u^{n+j}\!+\!f^{n+j})
	\quad\text{with b.c.~} u^{n+s} = g(t_n + s\dt),
\end{equation}
where $\beta_s \neq 0$. The scheme \eqref{eq:LMM} defines a linear recursion relation for $u^{n+s}$, and can be written using matrix notation on the vector solution $(u^{n+s},\dots,u^{n+1})^T$, see \cite{HairerNorsettWanner1993}. To characterize the error, denote $\err^n = u^n - u^*(t_n)$, and let $\vecipower{\err}{n+1} = (\err^{n+s},\dots,\err^{n+1})^T$. One can then obtain an equation for the error vector $\vecipower{\err}{n+1}$, similar to \eqref{eq:ApproxErrIRK}, by considering time-periodic solutions $\vecipower{\err}{n} = z^n\,\vec{\err}(x)$ and $\vecipower{\lte}{n} = z^n\,\vec{\ltes}(x)$:
\begin{equation}\label{eq:LMM_EigVect}
	\veci{\err} - M\,\diffop\,\veci{\err}
	= N\,\veci{\ltes}\quad\text{with homogeneous b.c.\ for }\veci{\err}.
\end{equation}
Here $M = \frac{\dt}{z-1}AB + \dt B$ and $N = E+ \frac{1}{z-1}AE$ with
\begin{equation*}
	A = \begin{pmatrix}
		\alpha_{s-1} & \alpha_{s-2} & \cdots & \alpha_0 \\
		1 & 0  & & \\
		&\ddots & \ddots & \\
		& & 1 & 0
	\end{pmatrix},\quad
	B =
	\begin{pmatrix}
		\beta_s & \cdots & \beta_2 & \beta_1+\frac{1}{z}\beta_0\\
		0 & \cdots & 0 & 0 \\
		\vdots & & \vdots & \vdots\\
		0 & \cdots & 0 & 0
	\end{pmatrix},
\end{equation*}
$E = \veci{e}_1\vecipower{e}{T}_1$ and $\veci{e}_1 = (1,0,\dots,0)^T$. Note that $M$ has exactly $s-1$ zero eigenvalues with right eigenvectors corresponding to the $s-1$ dimensional null space of $B$. Moreover, $\frac{\dt}{z-1}AB$ is a rank-1 matrix with one $O(1)$ eigenvalue. As a result, $M$ has exactly one $O(1)$ eigenvalue and $s-1$ zero eigenvalues. Therefore, equation \eqref{eq:LMM_EigVect} does not incur a singularly perturbed BVP, and hence there are no BLs in the global solution $\vec{\err}(x)$. Another way to interpret these results is as follows. Every time-stepping scheme has one $O(1)$ eigenvalue in the error propagation matrix $M$, due to consistency. Time-stepping schemes that require only a single solve per time-step are devoid of order reduction.

We conclude by mentioning general linear methods (GLMs) \cite[Chapter 2]{Jackiewicz2009}, which are schemes with multiple steps and multiple stages. Although more complex, GLMs may inherit many of the good properties of Runge-Kutta and multistep methods, but also some of their drawbacks. Specifically, having multiple stages triggers the mechanism for boundary layers. Conversely, the added flexibility of GLMs allows for the construction of diagonally implicit schemes with desirable stability properties and high stage order \cite{ButcherJackiewicz1993, ZhangSanduBlaise2014}.

% --------------------------------------------------------------------------- 80
% --------------------------------------------------------------------------- 80
\section*{Acknowledgments}
% --------------------------------------------------------------------------- 80
% --------------------------------------------------------------------------- 80
The authors would like to thank David Ketcheson and Adrian Sandu for helpful conversations and suggestions.
This material is based upon work supported by the National Science Foundation under Grants DMS--1719637 (Rosales), DMS--1719640 (Zhou), DMS--2012271 and DMS--2309728 (both Seibold), DMS--2012268 and DMS--2309727) (both Shirokoff).
D. Shirokoff was supported by a grant from the Simons Foundation (\#359610).

% --------------------------------------------------------------------------- 80
% --------------------------------------------------------------------------- 80
\appendix
% --------------------------------------------------------------------------- 80
% --------------------------------------------------------------------------- 80

% --------------------------------------------------------------------------- 80
\section{Implicit Runge-Kutta Schemes Used in This Paper}
\label{app:listofRK}
% --------------------------------------------------------------------------- 80
All the DIRK schemes listed here are from \cite[Chapter IV.6]{WannerHairer1991}. Let $s$ be the number of stages, and $p$, $q$, and $\tilde{q}$ denote the order of the scheme, stage order, and WSO, respectively.

\noindent\textbf{Stiffly accurate DIRK with $s=2$, $p=2$, $q = 1$, $\tilde{q}=1$ (DIRK2)}:
	\begin{equation}\label{eq:DIRK2}
	\begin{array}{c|c c  }
	\gamma & \gamma &  \\
	1 & 1-\gamma & \gamma  \\
	\hline
	& 1-\gamma & \gamma
	\end{array}
	\quad \text{for}\quad \gamma = 1-\frac{\sqrt{2}}{2}
	\end{equation}
	
\noindent\textbf{Non stiffly accurate DIRK with $s=2$, $p=3$, $q=1$, $\tilde{q}=1$}:
	\begin{equation}\label{eq:DIRK3_2s}
	\begin{array}{c|c c  }
	\gamma & \gamma &  \\
	1-\gamma & 1-2\gamma & \gamma  \\
	\hline
	& \frac{1}{2} & \frac{1}{2}
	\end{array}
	\quad \text{for}\quad \gamma = \frac{3+\sqrt{3}}{6}
	\end{equation}
	
\noindent\textbf{Stiffly accurate DIRK with $s=3$, $p=3$, $q = 1$, $\tilde{q}=1$ (DIRK3)}:
	\begin{equation}\label{eq:DIRK3}
	\begin{array}{c|c c c}
	0.4358665215 & 0.4358665215 &  &   \\
	0.7179332608 & 0.2820667392 & 0.4358665215 &  \\
	1 & 1.208496649 & -0.644363171 & 0.4358665215\\
	\hline
	& 1.208496649 & -0.644363171 & 0.4358665215
	\end{array}
	\end{equation}
	
\noindent\textbf{Stiffly accurate DIRK with $s=5$, $p=4$, $q = 1$, $\tilde{q}=1$ (DIRK4)}:
	\begin{equation}\label{eq:DIRK4}
	\begin{array}{c|c c c c c }
	1/4 		& 1/4          & & & &\\
	3/4 		& 1/2		  & 1/4& &  &\\
	11/20 	& 17/50 	  & -1/25 & 1/4 &  &\\
	1/2 		& 371/1360 & -137/2720 & 15/544 & 1/4 &\\
	1 		& 25/24 & -49/48 & 125/16 & -85/12 & 1/4\\
	\hline
	& 25/24 & -49/48 & 125/16 & -85/12 & 1/4
	\end{array}
	\end{equation}

% --------------------------------------------------------------------------- 80
\section{Order Reduction and Periodic Solutions}
\label{app:periodicsol}
% --------------------------------------------------------------------------- 80
Here we outline a class of problems where OR originates through periodically
forced solutions. Assume the following applies to (\ref{eq:IBVP}) and
(\ref{eq:dirk_step_intermediatestages}--\ref{eq:dirk_step_finalstage}):
\begin{enumerate}[(i)]
\item
\emph{The operator $\diffop\/$, with homogeneous boundary conditions,
has a complete set of normal mode eigenfunctions, and corresponding
eigenvalues, $\{\lambda_{\ell}, \varphi_\ell(x) \}_{\ell=1}^\infty\/$}.
\item
\emph{The eigenvalues, as well as the corresponding scheme growth
factors $R_\ell = R(\lambda_\ell \dt)\/$, satisfy
$|e^{\lambda_\ell \dt}| \leq R_*\/$ and $|R_\ell| \leq R_*\/$, where
$R_* \leq 1\/$ is a constant.}\footnote{\,Example: heat equation in
    $0<x<\pi\/$, with Dirichlet b.c., and a stable scheme. Then
    $\varphi_\ell = \sin(\ell\,x)\/$ and $\lambda_\ell = -\ell^2\/$.}
\item
\emph{The scheme's growth factor satisfies
$D_g(\zeta) = |\frac{e^\zeta-R(\zeta)}{\zeta^{p+1}}| \leq B_*\/$ for
$\mbox{Re}(\zeta)\leq 0\/$, where $p\/$ is the scheme's order and
$B_*\/$ is a constant.}
\item
\emph{The initial conditions $u_{ic} = \sum \alpha_\ell\,\varphi_\ell\/$
are ``smooth enough'', in the sense that the terms in
$\sum \alpha_\ell\,\lambda_\ell^{\alpha}\,\varphi_\ell\/$ satisfy the Weierstrass M-test for all $0 \leq \alpha \leq p+1$ so that the series converges uniformly and absolutely to a continuous function (in $\overline{\Omega}$).}
\end{enumerate}
Then OR can occur only due to the ``periodic component'' (defined below)
of the solution.

First, we split the solution of the scheme equations into: (1)~a homogeneous
component $u_h^n\/$, which satisfies the initial conditions, with homogeneous
b.c.\ and no forcing; and (2)~a ``periodic component'' $u_\text{p}^n\/$, which
satisfies the scheme equations (with forcing and full b.c.) with
zero initial conditions (why we call this the ``periodic
component'' is explained below).

First we show that $u_h^n\/$ exhibits no OR; hence \emph{any OR that occurs must do so solely due to the periodic component.}  Clearly $u_h^n = \sum \alpha_\ell\,R_\ell^n\,\varphi_\ell\/$. The corresponding
PDE solution $u_h = \sum \alpha_\ell\,e^{\lambda_\ell\,t}\,\varphi_\ell\/$. It
follows that
\begin{equation}\label{eq:proof}
\left\|\frac{u_h(t_n) - u_h^n}{\dt^p}\right\|_\infty \leq
   n \, \dt \, R_{*}^{n-1}\,
   \sum_\ell |\alpha_\ell|\,D_g(\lambda_\ell\,\dt)|\lambda_\ell|^{p+1}\,
   \|\varphi_\ell\|_\infty\/ \leq C,
\end{equation}
where $C$ is a constant independent of $n$ and $\dt$. The second inequality in \eqref{eq:proof} follows since: $n \dt R_*^{n}$ is bounded independent of $n$; while  (iii-iv) imply the summation converges uniformly to a continuous function (on $\overline{\Omega}$). Hence,
$\left\|u_h(t_n) - u_h^n\right\| \leq C \dt^{p} = O(\dt^p)$.

We argue that the periodic component $u_\text{p}^n$ is a linear superposition of periodic solutions (hence the name). We can solve for $u_\text{p}^n$ using the Mellin/$z$-transform (correspondingly, the Laplace transform for the PDE). Because of (ii), the transform is analytic for $|z| \geq 1\/$, so the inverse Mellin transform can be written as an integral over the unit circle (correspondingly, the inverse Laplace transform can be written as an integral over the imaginary axis). However, because the initial data vanish, the integrands in these inverse transforms are actually periodic solutions to the scheme/equation, with forcing and b.c.\ provided by the transforms of the forcing and b.c.\ of the problem defining the periodic component.

Finally, if (iv) does not apply, then $u_h$ may exhibit OR. However, for dissipative PDEs, $u_h$ decays exponentially in time, and hence eventually, any OR that occurs will be dominated by the error in the periodic component $u_\text{p}$.

% --------------------------------------------------------------------------- 80
\section{Proof that the Square Bracket in \eqref{eq:ErrorFunctions} is $O(\dt^p)$}
\label{app:proof}
% --------------------------------------------------------------------------- 80
Here we compute the first bracket $B_1(x)$ in \eqref{eq:ErrorFunctions}, and show that $B_1(x) = O(\dt^p)$, where:
\begin{equation}\label{eq:B1}
	B_1(x) :=
	\underbrace{\left[\vecipower{\Erralpha}{T}\veci{r}_1 \, \ef_1(x) +
	\zeromode(x) +
	\sum_{i=2}^{s}\vecipower{\Erralpha}{T}\veci{r}_i \, \rsol_i(x)
	\right]}_{\textrm{Bracket } 1}.
\end{equation}
The intuitive underlying reason that the first square bracket $B_1(x)$ in \eqref{eq:ErrorFunctions} satisfies $B_1(x) = O(\dt^p)$ is because this bracket represents the error contribution from the regular part of the asymptotic expansion---which is a Taylor expansion in the small parameter $\dt\/$. Because the RK scheme order arises from satisfying the equation up to $O(\dt^p)$ upon expanding the solution in powers of $\dt$, this result should not be surprising, even though the proof is not immediate. Generally, one can expect (and prove, though we do not do it here) that, wherever a regular expansion for the numerical solution applies, OR does not occur.

In the calculation below we make use of the following formulas:
\begin{enumerate}
    \item[(ai)] From Proposition~\ref{prop:1stLeftEigenVector}, the term $\vecipower{\Erralpha}{T}\veci{r}_1 \, \ef_1(x) = O(\dt^p)$.
    \item[(aii)] From (\ref{eq:MatrixM}--\ref{eq:ErrorFunctions}) we have: $\zeromode = -\frac{1}{z}\vecipower{\Erralpha}{T}\vec{\lte} + O(\dt^p)$.
    \item[(aiii)] From \eqref{eq:ErrorFunctions} the function $\veci{h}(x) = \frac{1}{z} \veci{\lte} + O(\dt^p)$ (since $\ltel = O(\dt^p)$), hence:
        \begin{equation}\label{app:eq1}
            \ampk{i}\/U^*(x) := \vecipower{l}{T}_i \vec{h} = \frac{1}{z} \vecipower{\ell}{T}_i \vec{\lte} + O(\dt^p).
        \end{equation}
    \item[(aiv)] From the Definition in \eqref{eq:reg_sol}:
    	\begin{align*}
    	    \rsol_i(x)  &= \ampk{i}\left(
			U^* + \lambda_i \diffop U^* + \lambda_i^2 \diffop^2 U^* + \ldots + \lambda_i^{\mpow} \diffop^{\mpow} U^*
		\right), \quad\quad \textrm{for } 2 \leq i \leq s. \\
		    &= \left(
			I + \lambda_i \diffop  + \lambda_i^2 \diffop^2  + \ldots + \lambda_i^{\mpow} \diffop^{\mpow}
		\right) \frac{1}{z} \vecipower{\ell}{T}_i \vec{\lte} + O(\dt^p) \quad\quad \textrm{(using \eqref{app:eq1}})
	    \end{align*}
    \item[(av)]  From Proposition~\ref{prop:1stLeftEigenVector}, $\vecipower{\ell}{T}_1 \veci{\lte} = O(\dt^{p})$ so that
        \begin{equation}\label{app:eq2}
	        \frac{1}{z} \vecipower{\Erralpha}{T}\veci{r}_1 \left[ I + \lambda_1 \diffop + \lambda_1^2 \diffop^2 + \ldots + \diffop^m \right] \vecipower{\ell}{T}_1\lte = O(\dt^p).
        \end{equation}	
\end{enumerate}
Substituting the expressions from (ai), (aii) and (aiv) into equation \eqref{eq:groupterms} yields:
\begin{equation}\label{eq:step1}
    B_1(x) =
-\frac{1}{z} \vecipower{\Erralpha}{T} \sum_{i=1}^{s} \veci{r}_i \, \vecipower{\ell}{T}_i \veci{\lte} + \frac{1}{z} \vecipower{\Erralpha}{T} \sum_{i=2}^{s} \veci{r}_i \, \left( I + \lambda_i \diffop + \ldots + \lambda_i^m \diffop^m\right) \vecipower{\ell}{T}_i \veci{\lte} + O(\dt^p).
\end{equation}
In \eqref{eq:step1} we have inserted a resolution of identity $I = \sum_{i=1}^{s} \veci{r}_i \vecipower{\ell}{T}_i$ into the first term. Finally, adding (av) (which is an $O(\dt^p)$ correction) to \eqref{eq:step1} yields:
\begin{equation}\label{eq:step2}
\begin{split}
    B_1(x) &=
-\frac{1}{z} \vecipower{\Erralpha}{T} \sum_{i=1}^{s} \veci{r}_i \, \vecipower{\ell}{T}_i \veci{\lte} + \frac{1}{z} \vecipower{\Erralpha}{T} \sum_{i=2}^{s} \veci{r}_i \, \left( I + \lambda_i \diffop + \ldots + \lambda_i^m \diffop^m\right) \vecipower{\ell}{T}_i \veci{\lte} + O(\dt^p) \\
	&= \frac{1}{z} \vecipower{\Erralpha}{T} \left( \sum_{j = 1}^{m} \sum_{i=1}^{s} \diffop^{j} \lambda_i^{j} \veci{r}_i \vecipower{\ell}{T}_i \right) \veci{\lte} + O(\dt^{p})
	=  \frac{1}{z} \left( \sum_{j = 1}^{m} \diffop^{j} \vecipower{\Erralpha}{T} \, M^{j} \right) \veci{\lte} + O(\dt^{p})
\end{split}
\end{equation}
We now use the following two identities
\begin{align}\label{eq:iden1}
	&\vecipower{\Erralpha}{T} M^{j} \sov{i} = 0, \quad\quad \quad\quad \textrm{for } 2 \leq i + j \leq p. \\ \label{eq:iden2}
	& M^j = \sum_{i = 1}^{s} \lambda_i^j \vec{r}_j \vecipower{\ell}{T}_j =
	\lambda_1^j \veci{r}_1 \vecipower{\ell}{T}_1 + O( \dt^j).
\end{align}
Equation (\ref{eq:iden1}) follows from a direct calculation: $\vecipower{\Erralpha}{T} M^{j}$ is spanned by vectors of the form $\vecipower{b}{T} A^{v}$ for $0 \leq v \leq j-1$ so that Proposition~\ref{prop:StageOrderVectorOrthog} implies (\ref{eq:iden1}). The second identity (\ref{eq:iden2}) follows from a spectral expansion of $M$ with $\lambda_i = O(\dt)$ for $2\leq i \leq s$. Thus:
\begin{align*}
B_1(x)
&=\frac{1}{z} \sum_{i = 1}^{p-1} \sum_{j = 1}^{m} \frac{\diffop^j\, (\dt)^i}{(i-1)!}  \vecipower{\Erralpha}{T} \, M^{j} \sov{i} + O(\dt^{p}) \\
&=\frac{1}{z} \sum_{i = 1}^{p-1} \sum_{j = p - i+1}^{m} \frac{\diffop^j \, (\dt)^i}{(i-1)!}  \vecipower{\Erralpha}{T} \, M^{j} \sov{i} + O(\dt^{p})  \\
&=\frac{1}{z} \sum_{i = 1}^{p-1} \sum_{j = p - i+1}^{m} \frac{\diffop^j \, (\dt)^i}{(i-1)!}  \vecipower{\Erralpha}{T} \left( \lambda_1^j \veci{r}_1 \underbrace{\vecipower{\ell}{T}_1 \sov{i}}_{=O(\dt^{p-i})} + O(\dt^{p - i + 1}) \right) + O(\dt^{p})
= O(\dt^p).
\end{align*}
The identity $\vecipower{\ell}{T}_1 \sov{i} = O(\dt^{p-i})$ follows from \eqref{eq:ldottau} in Proposition ~\ref{prop:1stLeftEigenVector}.

%===========================================================================
\vspace{1.0em}
\bibliographystyle{plain}
\bibliography{references}
%===========================================================================

\vspace{1.5em}
\end{document}